\documentclass[10pt]{amsart}

\usepackage[latin2]{inputenc}
\usepackage{amsmath}
\usepackage{bbm}
\usepackage{graphicx}
\usepackage{amssymb}
\usepackage{esint}
\usepackage{tikz}
\usepackage[dvipsnames]{xcolor}
\usepackage{floatrow}
\usepackage{color}
\usepackage{amsthm}
\usepackage{stackengine}
\usepackage{scalerel}
\usepackage{epsfig}
\usepackage[english]{babel}
\usepackage{mathtools}
\newtheorem{theorem}{Theorem}
\newtheorem{proposition}[theorem]{Proposition}
\newtheorem{lemma}[theorem]{Lemma}
\newtheorem{corollary}[theorem]{Corollary}

\usepackage[colorlinks=true,linkcolor=RoyalBlue,anchorcolor=black,citecolor=Green,filecolor=black,menucolor=black,runcolor=black,urlcolor=black]{hyperref}
\usepackage[page,toc,titletoc,title]{appendix}
\usepackage{accents}

    \providecommand{\PfStep}[2]{\noindent \ifnum\value{#1}=1\else\fi{\sc Step }\arabic{#1}\label{#2}\refstepcounter{#1}.} 

\newtheorem*{theorem*}{Theorem}
\newtheorem*{theorem***}{Theorem 1 \& Theorem 2 of \cite{GNO_final}}
\newtheorem*{theorem****}{Theorem 2 of \cite{GNO5}}

\def\Xint#1{\mathchoice
{\XXint\displaystyle\textstyle{#1}}%
{\XXint\textstyle\scriptstyle{#1}}%
{\XXint\scriptstyle\scriptscriptstyle{#1}}%
{\XXint\scriptscriptstyle\scriptscriptstyle{#1}}%
\!\int}
\def\XXint#1#2#3{{\setbox0=\hbox{$#1{#2#3}{\int}$ }
\vcenter{\hbox{$#2#3$ }}\kern-.6\wd0}}

\def\fint{\Xint-}

\allowdisplaybreaks[2]

\newcommand{\supp}{\operatorname{supp}}

\newcommand{\diam}{\operatorname{diam}}
\newcommand{\eps}{\varepsilon}
\newcommand{\RVE}{{\operatorname{RVE}}}
\newcommand{\dist}{\operatorname{dist}}

\newcommand{\loc}{\rm{loc}}

\newcommand{\R}{\mathbb{R}}
\newcommand{\OO}{\mathcal{O}}

\newcommand{\dd}{{\textrm{d}}}
\newcommand{\eg}{e.\,g.}
\newcommand{\as}{a.\,s.\ }
\newcommand{\ie}{i.\,e.\ }
\newcommand{\aen}{a.\,e.\ }
\newcommand{\Wlog}{W.\,l.\,o.\,g.\ }
\newcommand{\wolog}{w.\,l.\,o.\,g.\ }

\newcommand{\domain}{{\mathcal{O}}}

\renewcommand{\vec}[1]{{\mathbf{#1}}}

\newcommand{\en}[1]{\left< #1 \right>} 
\providecommand{\dx}{\, \mathrm{d} x}

\newcommand*{\Hd}{\mathbb{H}^d_+}
\newcommand{\Rd}{\mathbb{R}}
\newcommand{\C}{\mathcal{C}}

\usepackage{textcomp}

\pagestyle{plain}
\numberwithin{equation}{section}

\newcommand{\transpose}{*}

\begin{document}

\title{Boundary Layer Estimates in Stochastic Homogenization}

\author{Peter Bella}
\thanks{\hspace{-3mm}TU Dortmund, DE-44227, Dortmund, Germany}
\author{Julian Fischer}
\thanks{\hspace{-3mm}IST Austria, AT-3400, Klosterneuburg, Austria}
\author{Marc Josien}
\thanks{\hspace{-3mm}CEA, DES, IRESNE, DEC, SESC, LMCP, Cadarache, F-13108, Saint-Paul-Lez-Durance, France}
\author{Claudia Raithel}
\thanks{\hspace{-3mm}TU Wien, AT-1040, Wien, Austria}


\begin{abstract}
We prove quantitative decay estimates for the boundary layer corrector in stochastic homogenization in the case of a half-space boundary. Our estimates are of optimal order and show that the gradient of the boundary layer corrector features nearly fluctuation-order decay; its expected value decays even one order faster. As a corollary, we deduce estimates on the accuracy of the representative volume element (RVE) method for the computation of effective coefficients: in $d\geq 3$ dimensions our understanding of the decay of boundary layers enables us to justify an improved formula for the RVE method, based on a combination of oversampling with the Hill-Mandel condition.
\end{abstract}

\maketitle

\vspace{-8.2mm}

\tableofcontents

\section{Introduction}

In the past two decades an extensive theory of quantitative homogenization of random media has been developed. For the linear elliptic PDE
\begin{align}
\label{LinearElliptic0}
-\nabla \cdot \big(a(\tfrac{\cdot}{\eps})\nabla u_\eps\big) &= f && \text{on }\mathbb{R}^d
\end{align}
with a stationary random coefficient field $a$ with (suitably defined) decay of correlations, optimal-order convergence estimates in $\eps$ to the solution $\bar u$ to a constant-coefficient effective equation $-\nabla \cdot (\bar a \nabla \bar u) = f$ have been obtained \cite{ArmstrongKuusiMourratNew,GNO1,GloriaOttoNearOptimal}.
However, most of the existing stochastic homogenization literature has focused on the homogenization problem on the entire space $\mathbb{R}^d$.

Both in periodic and stochastic homogenization, it is not surprising that the presence of a domain boundary may lead to intricate effects and substantial new challenges: In the periodic setting, the domain boundary breaks the periodic structure, while in stochastic homogenization, the domain boundary breaks the stationarity (invariance with respect to spatial shifts) of the problem. Nevertheless, for the Dirichlet problem with sufficiently regular boundary data the rate of convergence of $u_\eps$ towards $\bar u$ in the interior of the domain is not affected by the presence of a domain boundary; for a scalar PDE this follows from the maximum principle, while for systems we refer to \cite{AvellanedaLinCPAM,AvellanedaLinBoundary}.
However, already in the periodic case imposing boundary data that oscillates on the $\eps$-scale --a problem that naturally arises in higher-order quantitative homogenization theories-- leads to highly challenging new effects \cite{AllaireAmar,ArmstrongKuusiMourratPrange,GerardVaretMasmoudi}, as nontrivial interactions of the oscillating boundary data with the oscillating coefficient field emerge. To the best of our knowledge, the corresponding problem in stochastic homogenization -- the fluctuating Dirichlet problem for a random elliptic operator -- has remained largely unexplored.

In the present work, we are concerned with the stochastic homogenization of the fluctuating Dirichlet problem for the elliptic PDE \eqref{LinearElliptic0} on a half-space $\Hd$. We prove optimal-order estimates on the decay of the boundary layer corrector. Our results have interesting applications concerning the accuracy of the approximation of the effective coefficient via the representative volume element method (RVE method).

\smallskip\smallskip\smallskip
\noindent
\textbf{The fluctuating/oscillating Dirichlet problem in homogenization.}
Before stating our main results, let us briefly discuss how the oscillating/fluctuating Dirichlet problem naturally emerges in the construction of higher-order homogenization theories.
In order to obtain a higher-order quantitative homogenization theory for elliptic PDEs on bounded domains, it is important to understand the impact of boundary layers on the solution: For instance, for periodic coefficient fields $a(\cdot)=a(\cdot+k), k\in \mathbb{Z}^d$, with $a=a^\transpose$, solutions to the linear elliptic PDE on the whole-space
\begin{equation}
\label{LinearElliptic}
\begin{aligned}
-\nabla \cdot \big(a(\tfrac{\cdot}{\eps})\nabla u_\eps\big) &= \nabla \cdot f && \text{on }\mathbb{R}^d
\end{aligned}
\end{equation}
may be approximated by the solution to a constant coefficient effective equation $-\nabla\cdot (\bar a \nabla \bar u)=\nabla \cdot f$ up to an error of second order; i.e.
\begin{align}
\label{ErrorBoundFullSpace}
||u_\eps-\bar u||_{H^{-1}(B_1)}
\lesssim
\eps^2,
\end{align}
provided that the right-hand side $f$ is sufficiently regular \cite{BensoussanLionsPapanicolaou,AllaireAmar,BellaFehrmanFischerOtto}. In contrast, the analogous estimate \emph{fails} for the Dirichlet problem on bounded domains, even for the homogeneous Dirichlet problem and even strictly in the interior of the domain: On bounded domains the estimate $||u_\eps-\bar u||_{L^p}\lesssim \eps$ cannot be improved by passing to weak norms.
The underlying reason for this failure is boundary layer effects that provide an $\eps$-order contribution to the \emph{effective boundary data} for $\bar u$ \cite{AllaireAmar,ArmstrongKuusiMourratPrange,GerardVaretMasmoudi}.

On a mathematical level, the failure of the derivation of estimates like \eqref{ErrorBoundFullSpace} in the case of the Dirichlet problem on domains may be seen as follows:
In order to prove an estimate like \eqref{ErrorBoundFullSpace}, one attempts to approximate $u_\eps$ by the so-called two-scale expansion; written up to second order, it reads for periodic and symmetric coefficient fields $a=a^\transpose$
\begin{align}
\label{TwoScaleExpansion}
\bar u(x) + \eps \phi_i(\tfrac{x}{\eps}) \partial_i \bar u(x) + \eps^2 \psi_{ij}(\tfrac{x}{\eps})\partial_i \partial_j \bar u(x),
\end{align}
where $\phi_i$ and $\psi_{ij}$ are the so-called first-order and second-order homogenization correctors.
The homogenization (cell-)corrector $\phi_i$ is defined as the $1$-periodic solution to the PDE $-\nabla \cdot (a(e_i+\nabla \phi_i))=0$; note that it is unique up to additive constants. If we attempt to apply the two-scale expansion ansatz \eqref{TwoScaleExpansion} on a domain $\domain\subset \mathbb{R}^d$, we immediately observe that it fails to attain the correct boundary data --which would be $\bar u$ on the domain boundary. While correcting for the second-order $O(\eps^2)$ boundary contribution in \eqref{TwoScaleExpansion} is not necessary in order to obtain a second-order homogenization error estimate, the first-order contribution $\eps \phi_i(\tfrac{x}{\eps}) \partial_i \bar u(x)$ must be corrected for. In other words, it is necessary to subtract a term of the form $\eps v(x)$ with $v$ solving
\begin{equation}
\begin{aligned}
-\nabla \cdot \big(a\big(\tfrac{\cdot}{\eps}\big)\nabla v\big)&=0 &&\text{in }\domain,
\\
v&=\phi_i(\tfrac{\cdot}{\eps})\partial_i \bar u &&\text{on }\partial\domain
\end{aligned}
\end{equation}
from the ansatz \eqref{TwoScaleExpansion}. We remark that the above equation is an example of the so-called \emph{oscillating Dirichlet problem}: The boundary data oscillates on the same scale as the coefficient field. As it turns out, the oscillations in the boundary data and in the coefficient field give rise to quite nontrivial nonlinear interactions --under suitable assumptions, $v$ may be described in the interior in terms of a suitable $\bar a$-harmonic function (meaning that it is in the kernel of the differential operator $-\nabla \cdot \bar{a} \nabla$) satisfying certain \emph{effective boundary conditions} on $\partial\domain$. Importantly, these effective boundary conditions are not given in terms of a simple average of the boundary data \cite{AllaireAmar,ArmstrongKuusiMourratPrange,GerardVaretMasmoudi}.

In the context of regularity theory for linear elliptic operators with oscillating coefficients, a particularly important instance of an oscillating Dirichlet problem is given by the \emph{boundary layer corrector} $\theta_i^\eps$, defined as the solution to the problem
\begin{equation}
\begin{aligned}
-\nabla \cdot \big(a\big(\tfrac{\cdot}{\eps}\big)\nabla \theta_i^\eps\big)&=0 &&\text{in }\domain,
\\
\theta_i^\eps&=\phi_i(\tfrac{\cdot}{\eps}) &&\text{on }\partial\domain.
\end{aligned}
\end{equation}
By its definition, subtracting $\varepsilon \theta_i^\eps$ from the whole-space corrector $\phi_i^\eps(\cdot):= \varepsilon \phi_i (\cdot / \varepsilon)$ corrects the boundary conditions of $\phi_i^\eps$ while maintaining the defining corrector equation $-\nabla \cdot (a(e_i+\nabla (\phi_i^\eps- \varepsilon \theta_i^\eps)))=0$. In particular, $\phi_i^\eps- \varepsilon \theta_i^\eps$ yields a first-order homogenization corrector in $\domain$ with homogeneous Dirichlet boundary conditions on the boundary $\partial\domain$.

In the case of periodic homogenization, the study of the oscillating Dirichlet problem has a long history and by now a relatively complete picture has emerged in the literature.
On half-spaces with rationally aligned normals (with respect to the underlying periodic structure), Allaire and Amar \cite{AllaireAmar} have shown exponential decay of the gradient of the boundary layer corrector away from the boundary; they subsequently used this result to homogenize the oscillating Dirichlet problem on polygonal domains with rationally aligned boundaries\footnote{\,However, this homogenization result only holds along subsequences.}. For half-spaces with Diophantine normals, G\'erard-Varet and Masmoudi \cite{GerardVaretMasmoudi2,GerardVaretMasmoudi} established superpolynomial decay of the boundary layer corrector gradients; as a consequence, they were able to formulate quantitative homogenization error estimates for the oscillating Dirichlet problem on uniformly convex domains, albeit with a suboptimal rate. Recently, Armstrong, Kuusi, Mourrat, and Prange \cite{ArmstrongKuusiMourratPrange} have obtained optimal convergence estimates in $L^p$ for $p\geq 2$ for the oscillating Dirichlet problem on uniformly convex domains, in $d=2$ employing also a subsequent regularity result for the effective boundary data by Shen and Zhuge \cite{ShenZhugeRegularity}. The case of the analogous \textit{oscillating Neumann problem} has recently been treated by Shen and Zhuge \cite{ShenZhugeNeumann}. 

In principle, similar issues arise in the homogenization of random media: For stationary and suitably decorrelating random coefficient fields $a$ with $a=a^\transpose$, an analogue of the second-order error estimate \eqref{ErrorBoundFullSpace} has been derived in \cite{BellaFehrmanFischerOtto}. More precisely, in the case of such random coefficient fields, solutions to the linear elliptic PDE \eqref{LinearElliptic} on $\mathbb{R}^d$ for smooth $f$ may be approximated by a constant coefficient effective PDE $-\nabla\cdot (\bar a \nabla \bar u)=\nabla \cdot f$ up to an error of order\footnote{\,Note that for $d=3$ and $d=4$ this error estimate is of fluctuation order and therefore optimal.}
\begin{align*}
||u_\eps-\bar u||_{H^{-1}(B_1)}
\leq \mathcal{C}
\begin{cases}
\eps^{3/2}&\text{ for }d=3,
\\
\eps^{2} |\log \eps|&\text{ for }d=4,
\\
\eps^{2}&\text{ for }d\geq 5.
\end{cases}
\end{align*}
Again, in the case of the Dirichlet problem on bounded domains no analogue of this higher-order convergence result holds and the best possible homogenization error estimate for the Dirichlet problem is $||u_\eps-\bar u||_{L^p (\domain)}\leq \mathcal{C} \eps$ (for $d\geq 3$) --which may be obtained via duality methods (see, \eg, \cite{Suslina}).

While the periodic homogenization problem with fluctuating Dirichlet or Neumann boundary data is by now well-studied, in the case of stochastic homogenization only few results on the boundary layer corrector have been available in the literature. For instance, in \cite{FischerRaithel} boundary layer correctors have been constructed for half-spaces to develop a $C^{1,\alpha}$ large-scale regularity theory; however, the available estimates on the boundary layer corrector are far from optimal. Preceding this result, a Lipschitz regularity theory for random elliptic operators on general domains had been developed in \cite{ArmstrongMourrat,ArmstrongSmart}, having been achieved without an explicit construction of the boundary layer corrector.
In the recent work \cite{WangXu}, a $C^{1,\alpha}$ boundary regularity theory and a Green's function expansion for random elliptic operators on bounded sufficiently smooth domains are provided.
In \cite{JosienRaithelSchaeffner}, correctors for harmonic functions around lower-dimensional features like cusps have been studied. For correctors for interface problems, we refer to \cite{JosienRaithel} in the random setting --while interface problems in periodic homogenization are treated in \cite{BlancLeBrisLions, Josien, Zhuge}.

\smallskip\smallskip\smallskip
\noindent
\textbf{Quantitative estimates on boundary layers in stochastic homogenization.}
In the present work, considering the model case of $\domain = \Hd$ ($d \geq 3$), we establish quantitative estimates on the decay of the boundary layer corrector $\theta^{\varepsilon}_i$, given as the solution to the PDE
\begin{equation}
\label{defn_theta}
\begin{aligned}
-\nabla \cdot (a\big(\tfrac{\cdot}{\eps}\big) \nabla \theta^{\varepsilon}_i) &= 0 && \text{in }\quad \, \Hd, 
\\
\theta^{\varepsilon}_i &= \phi_i\big(\tfrac{\cdot}{\eps}\big) && \text{on } \quad \partial \Hd,
\end{aligned}
\end{equation}
where $\Hd := \left\{ x \in \mathbb{R}^d \, : \, x_1 > 0  \right\}$. More precisely, we consider an ensemble $\langle\cdot\rangle$ of coefficient fields $a$ on $\mathbb{R}^d$ -- i.e., a probability distribution on the space of coefficient fields on $\mathbb{R}^d$-- that is subject to a standard set of conditions of quantitative stochastic homogenization: We require uniform ellipticity and boundedness, stationarity (invariance of the law under spatial shifts), and quantitative decorrelation on scales larger than a microscale $\varepsilon>0$ in form of a spectral gap inequality. Under an additional small-scale regularity condition, we show that the boundary layer corrector decays as
\begin{align}
\label{GradThetaBound}
\big|\varepsilon\nabla \theta^{\varepsilon}_i(x)\big| \leq \mathcal{C}(a,x) \bigg(\frac{\varepsilon}{\dist(x,\partial\Hd)}\bigg)^{d/2-\delta} 
\quad \text{for any } \delta>0 \text{ and } x \in \mathbb{R}^d,
\end{align}
with a random field prefactor $\mathcal{C}(a,x)$ that is subject to uniform stretched exponential moment bounds. For the expected value $\mathbb{E}[\nabla \theta_i]$,
our arguments morally establish a higher-order decay
\begin{align*}
\big| \mathbb{E}[ \varepsilon \nabla  \theta^{\varepsilon}_i(x)]\big| \leq C \bigg(\frac{\varepsilon}{\dist(x,\partial\Hd)}\bigg)^{d/2+1-\delta},
\end{align*}
although we make this rigorous only in a spatially averaged sense. 
To the best of our knowledge, the estimate \eqref{GradThetaBound} is the first optimal-order estimate on the decay of boundary layers in stochastic homogenization.

\smallskip\smallskip\smallskip
\noindent
\textbf{Application to the representative volume element approximation.}
An interesting application of these estimates for boundary layers arises in the analysis of the representative volume element (RVE) method for the approximation of the effective coefficient $\bar a$. The RVE method proceeds by taking a sample of the random coefficient field $a$ --say, on a box $[0,L]^d$ with side length $L\gg \eps$-- and approximates the effective coefficient $\bar a$ as $\bar a \approx a^\RVE$ with
\begin{align*}
a^\RVE e_i := \fint_{[0,L]^d} a^{\eps} (e_i+\nabla \phi_i^{\eps,L}) \dx,
\end{align*}
where we have introduced the notation $a^{\eps}(\cdot) = a(\cdot / \eps)$. Here, $\phi_i^{\eps, L}$ solves the corrector equation $-\nabla \cdot (a^{\eps} (e_i+\nabla \phi_i^{\eps,L}))=0$ on the representative volume element $[0,L]^d$, for instance with homogeneous Dirichlet boundary conditions.
It turns out that this RVE approximation with Dirichlet boundary data features random fluctuations of order $|a^\RVE-\mathbb{E}[a^\RVE]|\sim (\tfrac{\eps}{L})^{d/2}$ and a systematic error of order $|\mathbb{E}[a^\RVE]-\bar a|\sim \tfrac{\eps}{L}$. The fluctuations are of central limit theorem scaling and therefore of optimal order. Note, however, that for $d\geq 3$ the systematic error dominates (see, e.\,g., \cite{RavichandranNguyenSchneider} for large-scale numerical simulations) --the reason for this being a rather large contribution to the systematic error coming from a boundary layer phenomenon. Naively one might expect that by averaging only in the interior (``oversampling''), \ie  by setting
\begin{align}
\label{ModificationRVE}
{\tilde a}^\RVE e_i := \fint_{[\kappa L,(1-\kappa)L]^d} a^{\eps}(e_i+\nabla \phi_i^{\eps,L}) \dx
\end{align}
for some constant $0<\kappa\leq \tfrac{1}{4}$, one may eliminate this issue of boundary layers. Interestingly, this turns out \emph{not} to be the case: While the oversampling decreases the systematic error by a constant factor, the order of convergence remains the same. The underlying reason is that the Dirichlet corrector $\phi_i^{\eps,L}$ differs from the whole-space corrector $\phi_i^{\eps}$ (the latter of which defines the effective coefficient as $\bar ae_i := \mathbb{E}[a^{\eps}(e_i+\nabla \phi^{\eps}_i)]$) in terms of a boundary contribution: Indeed, $\phi^{\eps}_i-\phi_i^{\eps,L}$ is an $a$-harmonic function in $[0,L]^d$ with boundary data $\phi_i^{\eps}$ --an oscillating Dirichlet problem. The failure of higher-order convergence of \eqref{ModificationRVE} is due to \emph{effective boundary conditions} imposed on $\phi^{\eps}_i-\phi_i^{\eps,L}$ due to the interaction of the \emph{fluctuating boundary data} $\phi^{\eps}_i$ with the fluctuations of the coefficient field $a^{\eps}$. To correct for these effective boundary conditions, in the numerical literature (see e.\,g.\ \cite[(2.14) \& (2.22)]{yue_local_2007}, or \cite[(1.11)]{LeBrisEtAl} for a related formula) the formula
\begin{align}
\label{ImprovedRVEFormula}
a^{\RVE,new} \fint_{[\kappa L,(1-\kappa)L]^d} e_i+\nabla \phi_i^{\eps,L} \dx := \fint_{[\kappa L,(1-\kappa)L]^d} a^{\eps}(e_i+\nabla \phi_i^{\eps, L}) \dx
\end{align}
has been proposed.
In the context of elasticity, the analogue of this condition is known in the mechanics literature as the Hill-Mandel condition.
The formula \eqref{ImprovedRVEFormula} is rather natural, as the effective coefficient should provide the relation between a given macroscopic field gradient and the corresponding macroscopically averaged flux.
However, so far a rigorous justification of the use of the Hill-Mandel-type condition \eqref{ImprovedRVEFormula} in the context of oversampled RVE approaches has been lacking.
We show that \eqref{ImprovedRVEFormula}, with overwhelming probability, improves the overall error estimate of the RVE approximation of $\bar a$ to $(\tfrac{\eps}{L})^{d/2-\delta}$ for $d=3, 4$. The underlying reason is that this formula accounts for the field gradient in the interior that is caused by the effective boundary conditions of order $\eps$ of $\phi^{\eps}_i-\phi_i^{\eps,L}$, which is in general of order $\fint_{[\kappa L,(1-\kappa)L]^d} \nabla \mathbb{E}[\phi_i^{\eps,L}] \dx \sim \tfrac{\eps}{(1-2\kappa)L}$ and hence stronger than the fluctuations of order $O((\eps/L)^{d/2})$.

\smallskip\smallskip\smallskip
\noindent
\textbf{Notation.}
Throughout the paper, to simplify notation and without loss of generality we mostly consider the half-space of the form $\Hd := \{ x \in \Rd^d : x_1 >0 \}$. Thus, the natural (interior) unit normal vector is $e_1:=(1,0,\ldots,0)$. 
For points $x \in \Rd^d$ we decompose their coordinate vector $x$ as $x:=(x^\perp,x^{\parallel})$ with $x^\perp \in \Rd$ and $x^\parallel \in \Rd^{d-1}$. Occasionally, we denote a fixed point as $x_0 \in \Hd$ --this should not be confused with the notation for the coordinates $x_i: = x \cdot e_i$ for $i = 1, \ldots, d$. 

By $B_r(x)$ we denote the ball of radius $r$ centered at $x$ and $B_r^+(x) := B_r(x) \cap \Hd$. When the ball is centered at $0$ we simply write $B_r$ and $B_r^+$ respectively. By $\mathcal{O}$ we denote a (typically $C^{1,1}$, but not necessarily bounded) domain. 

By $c$ and $C$, we denote generic constants whose value may change from line to line; typically, $C$ is used to denote a large constant, while $c$ denotes a small constant.

Similarly, we use the notation $\mathcal{C}$ to denote a generic random constant (respectively, $\mathcal{C}(x)$ to denote a generic random field) whose value may change from line to line. Unless otherwise specified, the constant $\mathcal{C}$ (respectively the random field $\mathcal{C}(x)$) will be subject to a uniform stretched exponential moment bound. We remark that occasionally we emphasize the randomness and write $\mathcal{C}(a)$ or $\mathcal{C}(a,x)$.

We frequently use the convention that $``\lesssim"$ means $`` \leq C(d, \lambda, \nu, \alpha),"$ where $d$ denotes the dimension, $\lambda>0$ is the ellipticity ratio of the coefficient field $a$, and $\nu$ and $\alpha$ are from the H\"older regularity assumption (A4) below. If a universal constant has additional dependencies that we would like to emphasize, e.\,g.\ on $\delta$, then we write $``\lesssim_{\delta}"$. In general, in the statements of results we emphasize the full list of dependencies of a universal constant by writing, \eg $``\lesssim_{d, \lambda, \nu, \alpha, \delta}"$, whereas in arguments we allow ourselves not to.

Throughout the paper, we consider an ensemble $\langle \cdot \rangle$ (a probability distribution) of uniformly elliptic and bounded coefficient fields $a$; we denote the corresponding probability space as $\Omega$ and the associated expected value of a random variable $G$ by $\langle G \rangle$. Regarding terminology, a \emph{quenched} estimate is a bound that holds \as, while an \emph{annealed} estimate is a bound on suitable stochastic moments.

We use standard notation for Sobolev spaces, with the slight deviation 
\begin{align*}
H^1_{\loc}(\domain):= \left\{v : v \in H^1(B_r(x)\cap\domain) \text{ for all } r>0, x \in \domain  \right\},
\end{align*}
for a domain $\domain\subseteq \R^d$. As usual, by $\dot H^1(\domain)$ we denote the closure of the set of all $C^1$ functions with bounded support with the norm $||u||_{\dot H^1(\domain)}:=(\int_\domain |\nabla u|^2 \dx)^{1/2}$.

In some sections of this paper we are working on an arbitrary $C^{1,1}$-domain, meaning that we cannot simply set $\eps =1$ and argue by scaling. We emphasize the following non-uniform choice of notation: The boundary layer corrector $\theta^{\eps}$, defined by \eqref{defn_theta}, has boundary data of order $1$, namely $\phi \big(\tfrac{\cdot}{\eps}\big)$, while the whole-space corrector $\phi^\eps(\cdot):=\eps \phi\big(\tfrac{\cdot}{\eps}\big)$ has boundary data of order $\eps$. In other words, a corrector with Dirichlet boundary data is obtained from the whole-space corrector $\phi^\eps$ and the boundary layer corrector $\theta^\eps$ as $\phi^\eps-\eps \theta^\eps$.

\section{Main results and strategy}

To begin this section, in Section \ref{setting_discussion} we list our assumptions on the ensemble $\langle\cdot\rangle$ of coefficient fields $a$ on $\mathbb{R}^d$. We then discuss a class of ensembles on $\Rd^d$ satisfying our assumptions. In Section~\ref{Thm1_discussion}, we state our main result on the decay of the boundary layer (Theorem~\ref{thm_1}). Section~\ref{Thm2_discussion} contains our main result  on (nearly) optimal convergence rates for a modified RVE approximation in $d =3, 4$ (Theorem~\ref{thm_2}).

\subsection{Setting of our results}
\label{setting_discussion}

Throughout this article we will make the following assumptions:

\paragraph{\textit{Assumptions on the ensemble.}}

\begin{itemize}
\item[(A1)] The ensemble $\langle\cdot\rangle$ is \emph{uniformly elliptic and bounded}: There exists $\lambda>0$ such that $\langle\cdot\rangle$-\as we have
\begin{align*}
|a(x)v|&\leq |v|,
\\
a(x)v \cdot v &\geq \lambda |v|^2
\end{align*}
for all $v\in \mathbb{R}^d$ and a.\,e.\ $x\in \mathbb{R}^d$.
\item[(A2)] The ensemble $\langle\cdot\rangle$ is \emph{stationary}, that is, the law of $a(\cdot +x)$ coincides with the law of $a(\cdot)$ for any $x\in \mathbb{R}^d$. In other words, the statistics of the coefficient field do not depend on the region in space.
\item[(A3)] The ensemble is subject to decorrelation on scales larger than the microscale in the sense that it satisfies a spectral gap inequality: For any random variable $\xi(a)$ it holds that 
\begin{align}
\label{spectral_gap_L2}
\en{ (\xi - \en{\xi} )^2} \leq \en{ \int_{\Rd^d}  \bigg( \fint_{B_{\eps}(x)} \bigg| \frac{\partial \xi}{ \partial a}  \bigg| \dd y \bigg)^2  \dx},
\end{align}
where $\fint_{B_{\varepsilon}(x)} \big| \frac{\partial \xi}{ \partial a}  \big| \dd y$  is used to denote 
\begin{align}
\label{deriv}
\sup_{\delta a}\limsup_{h \rightarrow 0 } \frac{\xi(a + h \delta a ) - \xi(a)}{h} 
\end{align}
with the supremum taken over perturbations $\delta a: \Rd^d \rightarrow \mathbb{R}^{d \times d}$ that are supported in $B_{1}(x)$ and satisfy $\| \delta a \|_{L^{\infty}(\Rd^d)} \leq 1$. When $\xi$ is Fr\'echet differentiable we remark that \eqref{deriv} is just $\big|\frac{\partial \xi}{\partial a(x)}\big|$.
\item[(A4)] The coefficient field $a$ is H\"older continuous with stretched exponential moments for some H\"older exponent $\alpha \in (0,1]$. In particular, there exists a stationary random field $\mathcal{C}(a,x)$ with  $\mathbb{E}[\exp(\nu \mathcal{C}^\nu)]\leq 2$ for some $\nu >0$ such that the H\"older estimate
\begin{align}
\label{holder_norm}
\sup_{y,z\in B_{1}(x)}  \frac{|a(y)-a(z)|}{|y-z|^\alpha} \leq \mathcal{C}(a,x)
\end{align}
holds.
\end{itemize}
Note that (A1)--(A3) are a standard set of assumptions in quantitative stochastic homogenization; in particular, assumption (A3) is a standard quantification of the qualitative ergodicity assumption that is used in classical qualitative stochastic homogenization results \cite{naddafspencer98,GO1}.

Assumption (A4) is a small-scale regularity assumption that we require mostly for convenience; it enables us to state pointwise gradient bounds (which may otherwise fail due to a possible failure of regularity on scales smaller than the microscale).

\bigskip

We now give an example of a class of coefficient fields that is subject to our assumptions (A1)--(A4), see Example~1 below. In Example~2, we then then introduce another class of coefficient fields that is subject to a spectral gap estimate similar to (A2). Note that both examples are standard examples used to illustrate the applicability of quantitative stochastic homogenization results.

\medskip

\noindent\textbf{\textit{Examples of ensembles satisfying (A1) - (A4).}} 

\medskip

\noindent \textbf{Example 1:} We consider a Gaussian ensemble that satisfies a certain decorrelation estimate. In particular, let $\tilde{a}(x)$ be a scalar Gaussian field that is centered and stationary and, furthermore, satisfies a decorrelation estimate of the form 
\begin{align*}
\int_{\Rd^d} \sup_{|x| = |y|} \big| \en{\tilde{a}(x) \tilde{a}(0) }\big| \, \dd y \leq 1.
\end{align*}
Defining $a:=g(\tilde{a}(x))$, where $g$ is a bounded Lipschitz map from $\Rd$ into the subset of $\Rd^{d \times d}$ consisting of the uniformly elliptic matrices with ellipticity ratio $\lambda>0$, it is shown that (A1)--(A3) are satisfied, \eg,  in \cite{DuerinckxGloriaSpectralGap}. For Gaussian random fields $\tilde a(x)$ whose power spectrum decays sufficiently quickly, the random fields $\tilde a$ are also \as in $C^1_{loc}$ and thus also satisfy (A4) (see \cite[Lemma 3.1]{JosienOtto_2019}).

\bigskip

\noindent \textbf{Example 2:} The second example involves random spherical inclusions. More precisely, as in \cite{Torquato} we consider a stationary Poisson point process on $\mathbb{R}^{d}\times [0,\infty)$. Let $(X_i,h_i)_{i\in \mathbb{N}}$ be a realization; then consider the points $X_i\in \mathbb{R}^d$ in the order of their coordinate $h_i$ and for each point $X_i$ place a ball of radius $1$ around $X_i$ if the ball does not intersect with any of the previously placed balls. Denote the union of the balls obtained by this procedure by $U$. Then define a coefficient field as $a(x):=\lambda \chi_{\mathbb{R}^d\setminus U}+\chi_U$. While such ensembles of coefficient fields consisting of random high-conductivity inclusions do not satisfy the spectral gap inequality (A3), they have been shown to satisfy a slightly different version of a spectral gap inequality in \cite{DuerinckxGloriaSpectralGap}. We expect that a relatively straightforward adaption of our arguments would make our results applicable also to ensembles satisfying such an ``oscillation spectral gap'' estimate, though it would involve some additional work to obtain the necessary small-scale (microscopic) regularity theory (as derived from assumption (A4) in our present work).

\subsection{Theorem \ref{thm_1}: Optimal decay estimate for the boundary layer}

\label{Thm1_discussion}

In our first main result we show that if the ensemble $\langle \cdot \rangle$ of coefficient fields on $\Rd^d$ satisfies (A1)-(A4), then we are able to obtain an (almost) optimal decay estimate for the gradient of the boundary layer $|\nabla \theta^{\eps}|$ away from the boundary $\partial \Hd$. We remark that this estimate is \textit{quenched}, in the sense that it is stated for \aen coefficient field $a$ and includes a random constant. Here is the statement:

\begin{theorem} \label{thm_1} Let $d\geq 3$, $i=1, \ldots, d$, and $\langle\cdot\rangle$ be an ensemble of coefficient fields on $\Rd^d$ that satisfies the assumptions (A1)-(A4).
Denote by $\theta^{\eps}_i \in H^1_{\loc}(\Hd ; \mathbb{R}^d)$ the half-space boundary layer corrector, that is, the unique\footnote{\,Uniqueness follows from the Liouville principle for random elliptic operators on half-spaces proven in \cite{FischerRaithel}.} weak solution to \eqref{defn_theta} with the property that $\nabla \theta^{\eps}_i$ decays away from $\partial \Hd$ --in the sense that
\begin{align*}
\lim_{r\rightarrow \infty} \fint_{B^+_r} |\nabla \theta^{\eps}_i|^2 \dd x =0.
\end{align*}
Let $0< \delta \ll 1$.
Then there exists a random field $\mathcal{C}(a,x)$ that is stationary with respect to shifts tangential to the boundary and has stretched exponential moments --in the sense that there exists $C = C(d,\lambda, \nu, \alpha,\delta)$ with
\begin{align}
\label{stretched}
\en{\exp\bigg(\frac{\mathcal{C}(a,x)^{1/C}}{C}\bigg)}\leq 2
\end{align}
for any $x\in \Hd$-- such that the gradient of the boundary layer corrector satisfies the decay estimate
\begin{align}
\label{main_goal}
|\eps \nabla \theta^{\eps}_i (x)|  \lesssim_{d, \lambda, \nu, \alpha, \delta}  \mathcal{C}(a, x) \bigg(1 + \frac{x^{\perp}}{\varepsilon} \bigg)^{-\frac{d}{2} + \delta}
\end{align}
for any $x \in \Hd$.  
\end{theorem}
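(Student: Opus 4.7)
The heuristic behind \eqref{main_goal} is that $\phi_i$ has stationary centered increments whose averages over boundary balls of radius $R$ exhibit CLT-scale fluctuations of order $R^{1-d/2}$, and since $\theta_i$ is the $a$-harmonic extension of $\phi_i|_{\partial\Hd}$, at a point $x$ at depth $x^\perp$ the gradient $\nabla\theta_i(x)$ should behave like $R^{1-d/2}/R$ with $R \sim x^\perp$. The plan is to make this rigorous by a Poisson-kernel-type representation combined with a sensitivity/spectral-gap argument at the annealed level, and then to upgrade the resulting annealed bound to a pointwise quenched bound with stretched exponential prefactor by invoking the large-scale regularity theory for $a$-harmonic functions on half-spaces established in \cite{FischerRaithel}.

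\textbf{Annealed bound via sensitivity.} For a smooth observable $\xi = \int_\Hd \eta\,\zeta\cdot\nabla\theta_i \dx$ with $\eta$ localized at scale $\eps$ near the target point $x$ and $\zeta \in \Rd^d$, I would apply the spectral gap inequality \eqref{spectral_gap_L2}. The Malliavin-type derivative $\partial\xi/\partial a$ is computed by linearizing the defining PDE: the variation $\delta\theta_i$ satisfies $-\nabla\cdot(a\nabla\delta\theta_i) = \nabla\cdot(\delta a\nabla\theta_i)$ in $\Hd$ with non-trivial boundary data $\delta\theta_i = \delta\phi_i$ on $\partial\Hd$. Testing against the adjoint problem on $\Hd$ with source $\nabla\cdot(\eta\zeta)$ and zero Dirichlet data expresses $\partial\xi/\partial a$ in terms of integrals of $\nabla\theta_i$ and $\nabla\phi_i$ weighted by the gradient of the half-space adjoint Green's function $G^*_a(x,\cdot)$ and its conormal trace on $\partial\Hd$.

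\textbf{Green's function decay and dyadic integration.} Combining interior and boundary large-scale $C^{1,\alpha}$ estimates with the half-space Liouville principle of \cite{FischerRaithel} yields, on scales exceeding the random minimal radius $r_*$, the gradient bound $|\nabla_x\nabla_y G^*_a(x,y)| \lesssim |x-y|^{-d}$ and a corresponding decay for the Poisson kernel on $\partial\Hd$. I would then decompose the resulting variance representation into dyadic annuli around $x$ (both inside $\Hd$ and on $\partial\Hd$) and, using the classical optimal variance estimates for $\nabla\phi_i$ together with the Green's function decay, show that the annulus of radius $R$ contributes at the level $R^{-d}$ times a CLT-type factor $R^{1-d/2}$, so that summation over $R \geq x^\perp$ produces $\en{|\nabla\theta_i(x)|^q}^{1/q} \lesssim (x^\perp/\eps)^{-d/2+\delta}$ for every $q<\infty$.

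\textbf{Quenched upgrade and main obstacle.} To pass from annealed moments to the quenched pointwise bound \eqref{main_goal} with prefactor $\mathcal{C}(a,x)$ obeying \eqref{stretched}, I would invoke the large-scale mean-value and Lipschitz estimates for $a$-harmonic functions up to the boundary from \cite{FischerRaithel} to dominate $|\nabla\theta_i(x)|$ by an $L^2$-average on the ball of radius $r_*(x)$, where $r_*$ has stationary stretched exponential moments, and then combine this with a Chebyshev argument over a dyadic covering of $\Hd$. The main obstacle, in my view, is to carry out the Green's function step with optimal exponents all the way up to $\partial\Hd$, and to cope with the fact that the sensitivity formula re-introduces $\nabla\theta_i$ itself into the variance bound; circumventing this self-reference will likely require an iteration scheme that starts from a crude a priori decay (say of order $(x^\perp/\eps)^{-\alpha}$ for some small $\alpha>0$) and feeds it back through the three steps above to push the exponent up to the almost-optimal value $d/2-\delta$.
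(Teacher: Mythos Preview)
Your outline captures the right overall architecture --- sensitivity via spectral gap, an iteration to overcome the self-reference of $\nabla\theta_i$ in its own variance bound, and a quenched upgrade through large-scale regularity --- and this is indeed essentially the paper's strategy. But there is one genuine gap that would stop the argument as written.

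\textbf{The expectation is not handled.} The spectral gap inequality \eqref{spectral_gap_L2} only controls the \emph{fluctuations} $\langle(\xi-\langle\xi\rangle)^2\rangle$, not the full moment $\langle\xi^2\rangle$. For the whole-space corrector one has $\langle\nabla\phi_i\rangle=0$ by full stationarity, so the mean drops out automatically; but $\nabla\theta_i$ is only stationary with respect to \emph{tangential} shifts, and there is no reason a priori why $\langle\nabla\theta_i(x)\rangle$ (or the averaged functional $\langle F\rangle$) should be small. Your sensitivity step therefore yields a bound on $\langle|F-\langle F\rangle|^{2q}\rangle^{1/2q}$ only, and the argument stalls unless you separately prove that $|\langle F\rangle|$ itself decays at the desired rate. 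The paper does this in Proposition~\ref{Systematic_error}: one replaces the dual $h$ solving $-\nabla\cdot(a^\transpose\nabla h)=\nabla\cdot g$ by its two-scale expansion $\bar h+\phi_j^*\partial_j\bar h$, and then exploits tangential stationarity to show that after taking expectation the boundary contributions and the bulk contribution outside the support of $g$ all vanish (via the identity $\int_{Re_1+\partial\Hd}\nabla^k\bar h\,\mathrm{d}S=0$). What survives is a term of order $(\eps/r)$ times the current decay exponent of $\nabla\theta_i$, which is exactly what feeds the iteration. Without this ingredient your iteration has no way to improve the exponent.

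\textbf{Minor points.} The paper obtains the seed estimate $(1+x^\perp/\eps)^{-1/3}$ by a concrete Campanato comparison with the homogenized problem (Proposition~\ref{intermediate_lemma}), not just ``some small $\alpha$''; and rather than pointwise Green's function bounds it runs the sensitivity step through auxiliary PDE solutions $h,\hat h,v_1,v_2$ and a weighted Meyers estimate (Lemma~\ref{sens_Lm0}), which avoids having to justify annealed pointwise Green's function estimates up to the boundary. These are technical choices rather than conceptual gaps, but the expectation issue above is essential.
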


The decay of $|\eps \nabla \theta^{\eps}_i|$ in \eqref{main_goal} is, up to the $\delta$-loss, of the optimal scaling that may be expected. This can be seen in the following heuristic argument: Fixing $x_0 \in \Hd$ and $e\in \R^d$, let $h$ solve the divergence form equation 
\begin{equation}
\label{h_heur}
\begin{aligned}
-\nabla \cdot (a^\transpose\big(\tfrac{\cdot}{\eps}\big) \nabla h^{\eps}) &= \nabla \cdot (\delta_{x_0} e)&&\text{in }\Hd,
\\
h^{\eps}& =0&&\text{on }\partial \Hd,
\end{aligned}
\end{equation}
and rewrite $\eps \nabla \theta^{\eps}_i(x_0)$ as 
\begin{align}
\label{heuristics_1}
e \cdot \eps \nabla \theta^{\eps}_i(x_0) =  \int_{\Hd}  - \nabla \cdot (\delta_{x_0} e ) \eps \theta^{\eps}_i \, \dx   = - \int_{\partial \Hd} \phi^{\eps}_i e_1 \cdot a^\transpose\big(\tfrac{\cdot}{\eps}\big) \nabla h^{\eps}  \, \dd S.
\end{align}
Notice that to obtain the above relation we have used the equation \eqref{defn_theta}, that $h^{\eps} = 0$ on $\partial \Hd$, and the identity $\eps \theta^{\eps}_i = \phi^{\eps}_i$ on $\partial \Hd$. Now, we expect the right-hand side of \eqref{heuristics_1} to have the same asymptotic scaling behaviour as the corresponding quantity with the Green's function $h$ replaced by its homogenized counterpart $\bar h$ solving $-\nabla \cdot (\bar a^\transpose \nabla \bar h) = \nabla \cdot (\delta_{x_0} e)$ in $\Hd$. Using that $|\nabla \bar{h} (\cdot)| \leq C |\cdot - x_0|^{-d}$, we obtain
\begin{align} 
\label{heuristics_2}
 \int_{\partial \Hd} \phi^{\eps}_i e_1 \cdot \bar{a}^\transpose \nabla \bar{h}  \, \dd S  \sim (x_0^\perp)^{-1} \fint_{B_{x_0^{\perp}}(x_0^\parallel)} \phi^{\eps}_i  \, \dd S ,
\end{align}
where we have used $B_{x_0^\perp}(x_0^\parallel)$ to denote the $d-1$ dimensional ball with radius $x_0^\perp$ centered around $x_0^\parallel$ in $\partial \Hd$. Since under the assumptions (A1) - (A3) volume averages of the gradient of the whole-space corrector display central limit theorem scaling \cite{MourratOtto}, one expects that
\begin{align*}
\fint_{B_{x_0^{\perp}}(x_0^\parallel)}  \phi_i^{\eps}  \, \dd S \sim |x_0^\perp| \bigg( \frac{|x_0^\perp|}{\varepsilon} \bigg)^{-\frac{d}{2}}.
\end{align*}
Combining this observation with \eqref{heuristics_1} and \eqref{heuristics_2} suggests that \eqref{main_goal} cannot be improved --aside from the $\delta$-loss.

\subsection{Theorem \ref{thm_2}: Improved error estimate for the representative volume element method.}
\label{Thm2_discussion}

In our second main result we use Theorem \ref{thm_1} to obtain an improved error estimate for the RVE method with oversampling (see \eqref{ImprovedRVEFormula}). As already described above, the error in the RVE method can be split into two contributions: the \textit{random} error and the \textit{systematic} error. While the former is well-understood to be of central limit theorem scaling, without additional modifications such as ``screening'' terms in the corrector equation \cite{LeBrisEtAl,GloriaRVE, GO_RVE} the systematic error would dominate in $d\geq 3$ dimensions. Our understanding of boundary layers enables us to show that a minor modification (proposed already in the numerics literature \cite{yue_local_2007} and being an analogue of the Hill-Mandel condition in mechanics) improves the bound on the systematic error, thereby improving the overall rate of convergence for $d \geq 3$. Our result reads as follows:

\begin{theorem}
\label{thm_2}
Let $d\geq 3$ and let $\langle\cdot\rangle$ be an ensemble of coefficient fields on $\Rd^d$ that satisfies the assumptions (A1)-(A4). Let $\bar a\in \mathbb{R}^{d\times d}$ be the corresponding effective coefficient.
Denote by $\phi_i^{\eps,L}$ the homogenization corrector with homogeneous Dirichlet boundary conditions on the box $[0,L]^d$, \ie  let $\phi_i^{\eps,L}$ satisfy
\begin{equation}
\label{box_corrector}
\begin{aligned}
-\nabla \cdot (a^{\eps}(e_i+\nabla \phi_i^{\eps,L})) &=0 &&\text{in }(0,L)^d,
\\
\phi_i^{\eps,L} &=0 &&\text{on }\partial [0,L]^d.
\end{aligned}
\end{equation}
Given any oversampling parameter $\kappa \in (0,\tfrac{1}{4}]$, define the RVE approximation $\bar a^L\in \mathbb{R}^{d\times d}$ for the effective coefficient as the solution to the system of linear equations
\begin{align}
\label{DefBarA}
\bar a^L \fint_{[\kappa L,(1-\kappa)L]^d} e_i+\nabla \phi_i^{\eps,L} \dx
=\fint_{[\kappa L,(1-\kappa)L]^d} a^{\eps}(e_i+\nabla \phi_i^{\eps,L}) \dx,
\end{align}
for all $1\leq i\leq d$. Then, for any $\delta>0$ and any $0<s\leq c(d,\lambda,\alpha,\nu) (\tfrac{L}{\eps})^{\min\{d/2,2\}-\delta}$, the error estimate
\begin{align*}
\mathbb{P}\big[\big|\bar a^L-\bar a\big|>s \big(\tfrac{\eps}{L}\big)^{\min\{d/2,2\}-\delta} \big] \leq 2\exp(-s^{1/C}/C)
\end{align*}
holds with $C=C(d,\lambda,\alpha,\nu,\delta)$. In particular, with at least this probability $\bar a^L$ is well-defined, \ie the defining linear system has a unique solution.
\end{theorem}
\noindent Overall, for $d=3,4$ we see that with overwhelming probability, the error $|\bar a^L-\bar a|$ is at most of order $(\tfrac{\eps}{L})^{d/2-\delta}$, improving the error estimate of the standard RVE approximation to (almost) the fluctuation scaling. This behavior is confirmed in large-scale numerical simulations in the upcoming work \cite{SentlerSchneiderBella}.

We remark that as a slight downside of the formula \eqref{DefBarA}, in principle we cannot rule out that there may be a small probability that the linear system for $\bar a^L$ given by \eqref{DefBarA} may fail to be invertible; thus, we cannot make a statement on the error of the expected value $\mathbb{E}[\bar a^L]$. However, the probability for this happening is exceedingly low, as shown by our stretched exponential estimate.

\subsection{Strategy}

As shown in the companion paper \cite{BFJR_1}, deterministic estimates on the boundary layer corrector $\theta_i^\eps$ --based on estimates on its boundary data $\eps^{-1} \phi_i^\eps$ in combination with large-scale regularity theory-- yield a suboptimal decay estimate of the form
\begin{align*}
|\eps \nabla \theta_i^\eps(x)| \leq \mathcal{C} \cdot \big(\tfrac{\eps}{\eps+\dist(x,\partial\Hd)}\big)^{-1/3}.
\end{align*}

Following a now-standard strategy in quantitative stochastic homogenization \cite{naddafspencer98,GO1,GNO5}, the fluctuations of the boundary layer corrector $\nabla \theta_i^\eps(x)-\mathbb{E}[\nabla \theta_i^\eps(x)]$ may (in suitable negative Sobolev norms) be estimated by means of the spectral gap inequality. This requires a bound on the sensitivity (Fr\'echet derivative) of $\nabla \theta_i^\eps$ with respect to changes in the coefficient field, which we obtain as usual by combining (large-scale) regularity results for random elliptic operators with the estimates on the corrector gradient. The required regularity results for random elliptic operators in our setting are established in the companion paper \cite{BFJR_1}. Overall, we achieve estimates on (linear functionals of) $\nabla \theta_i^\eps(x)-\mathbb{E}[\nabla \theta_i^\eps(x)]$ of nearly fluctuation order $\big(\tfrac{\eps}{\dist(x,\partial\Hd)})^{-d/2}$, see Proposition~\ref{sens_Prop1}.

The key step to proving Theorem~\ref{thm_1} is the derivation of an estimate on $\mathbb{E}[\nabla \theta_i^\eps(x)]$. Based on a two-scale expansion argument and duality, Proposition~\ref{Systematic_error} shows that the expected value $\mathbb{E}[\nabla \theta_i^\eps(x)]$ decays one order faster than the best available estimate for $\mathbb{E}[|\nabla \theta_i^\eps(x)|^2]^{1/2}$. In conjunction with the fluctuation bounds, this enables an iteration argument to obtain Theorem~\ref{thm_1}.

Our result on the RVE approximation of effective coefficients (Theorem~\ref{thm_2}) relies on a precise description of the impact of boundary layer effects on the solution in the interior of the RVE $(0,L)^d$. This description is achieved by harnessing our estimates on the half-space boundary layer corrector from Theorem~\ref{thm_1}, approximating each of the $2\cdot d$ faces of the RVE $(0,L)^d$ by a half-space and estimating the error incurred in this procedure.

We remark that many of our arguments use quantitative stochastic homogenization results that have been established in the whole-space setting. These results, in particular, include \cite[Theorem 1 \& Theorem 2]{GNO_final} and \cite[Theorem 2]{GNO5}, which we have summarized in an appendix for the convenience of the reader.

\section{Arguments for our main results}

\subsection{Proof of Theorem \ref{thm_1}} 

\label{overview_thm1}

Recall that a random field $\C(a,x)$ having stretched exponential moments is equivalent to the existence of constants $m,C>0$ such that $ \en{ |\C(a, x)|^q }^{1/q}  \leq C q^m$ holds for any $q\geq 1$ and $x \in \Hd$.  (By Jensen's inequality it is, in fact, sufficient to obtain the previous moment bound on $\C(a,x)$ only for $q\geq  q_0$ for some $q_0\geq1$.) For \eqref{main_goal} it is, therefore, sufficient to show that there exist a constant $m>0$ and $q_0 \geq 1$ such that
\begin{align}
\label{main_goal_2}
\en{ |\eps \nabla \theta^{\eps}_i (x)|^q }^{\frac{1}{q}}  \lesssim_{\delta} q^m \left(1 + \frac{x^{\perp}}{\eps} \right)^{-\frac{d}{2} +\delta},
\end{align}
for any $x \in \Hd$ and $q \geq q_0$.
Our argument for \eqref{main_goal_2} comes in three main steps:\\

\medskip

\noindent \textbf{Step 1:} We recall the following suboptimal decay of $|\eps\nabla \theta^{\eps}_i(x)|$ away from $\partial \Hd$:
\begin{proposition}[{Proposition~2 of \cite{BFJR_1}}]
\label{intermediate_lemma}
Let $d\geq 3$, $i\in \{1, \ldots, d\}$, and let $\langle\cdot\rangle$ be an ensemble of coefficient fields on $\Rd^d$ that satisfies the assumptions (A1)-(A4). Let $\domain$ be either a bounded $C^{1,1}$-domain or let $\domain=\Hd$. Then there exists a random field $\C(a,x)$ with stretched exponential moments in the sense of \eqref{stretched}, such that 
\begin{align}
\label{intermediate}
|\eps \nabla \theta^{\eps}_i (x)|   \lesssim_{d, \lambda, \nu, \alpha} \mathcal{C}(a,x)\bigg(1 + \frac{\dist(x,\partial\domain)}{\varepsilon}\bigg)^{-\frac{1}{3}}
\end{align}
holds for all $x \in \domain$.
\end{proposition}

\noindent This result is applied for $\domain = \Hd$ and slightly post-processed to obtain a corresponding suboptimal decay estimate for moments of a spatial average of $|\eps \nabla \theta^{\eps}_i|^2$ (around $x$).\\

\medskip

\noindent \textbf{Step 2:} Taking the post-processed result of Step 1 as input, we show that the obtained  annealed decay estimates on moments of a spatial average of $|\eps \nabla \theta^{\eps}_i|^2$ (around $x$) are self-improving --saturating at the desired decay $-d/2 + \delta$ in Theorem~\ref{thm_1}. This iterative self-improvement is shown in Proposition \ref{iterative_lemma}.

We remark that Proposition \ref{iterative_lemma} is the workhorse of our strategy: For notational convenience we now fix $x_0 \in \Hd$. To prove Proposition \ref{iterative_lemma} we control stochastic moments of 
\begin{align*}
\fint_{B_{x_0^{\perp}/8}(x_0)} |\eps\nabla \theta^{\eps}_i|^2 \dx
\end{align*}
in terms of moments of suitable weighted averages $F^{\eps}:=\int_{\Hd} g \cdot \eps \nabla \theta^{\eps}_i \dx$ --where the weights $g$ are, in some sense, localized around $x_0$. To control the moments of the $F^{\eps}$, we control the moments of the fluctuations $F^{\eps}-\langle F^{\eps} \rangle$ (Proposition \ref{sens_Prop1}) and the expected value $\langle F^{\eps} \rangle$ (Proposition \ref{Systematic_error}). Proposition \ref{sens_Prop1} relies on the spectral gap inequality (A3) and sensitivity estimates --the arguments are contained in Section \ref{sens_analysis}. The argument for Proposition \ref{Systematic_error} is contained in Section \ref{expectation} --it relies on a careful two-scale expansion argument, which exploits the stationarity of $\theta^{\eps}_i$ and the standard whole-space correctors with respect to shifts tangential to $\partial \Hd$.

Initializing the use of Proposition \ref{iterative_lemma} with the result of Step 1, we establish a weakened version of \eqref{main_goal_2} (contained in Proposition~\ref{intermediate_2}).

\medskip

\noindent \textbf{Step 3:} To finish, we post-process the result of Proposition~\ref{intermediate_2} using \cite[Theorem 1 \& Theorem 2]{GNO_final} and the small-scale regularity assumption (A4).\\


\medskip 

We now take a closer look at the iterative improvement in Step 2. As already mentioned above, the main ingredient for this improvement is given by the following proposition:

\begin{proposition}
\label{iterative_lemma}
We adopt the assumptions of Theorem \ref{thm_1} and furthermore assume that there exist constants $m,n>0$ such that
\begin{align}
\label{assumption_iterative_lemma}
\en{  \bigg(\fint_{B_{\frac{x_0^{\perp}}{2}}(x_0)}|{\eps} \nabla \theta^{\eps}_i|^2 \dx \bigg)^{\frac{q}{2}}  }^{\frac{1}{q}}\lesssim_{d,\lambda, \nu, \alpha} q^m \bigg(1+ \frac{x_0^{\perp}}{\varepsilon} \bigg)^{-n}, 
\end{align}
holds for any $x_0 \in \Hd$ and any $q \geq 1$. Then there exists a constant $\tilde{m}>0$ such that
\begin{align}
\label{result_iterative_lemma}
\en{  \bigg(\fint_{B_{\frac{x_0^{\perp}}{8}}(x_0)}|{\eps}\nabla \theta^{\eps}_i|^2 \dx \bigg)^{\frac{q}{2}}  }^{\frac{1}{q}}\lesssim_{d, \lambda, \nu, \alpha, \delta} q^{\tilde{m}} \bigg(1+ \frac{x_0^{\perp}}{\varepsilon}\bigg)^{-\min\{n+1-\delta,\frac d2 - \delta\}}
\end{align}
holds for any $x_0 \in \Hd$, any $q \geq 1$, and any $0<\delta \ll 1$.
\end{proposition}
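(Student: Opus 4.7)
Following the overview of Step 2, the proof reduces the conclusion to an $L^q_\omega$ bound for the linear functional
\[
F_g := \int_{\Hd} g\cdot \nabla \theta_i\,\dx,
\]
taken over test fields $g$ supported in $B_{x_0^\perp/8}(x_0)$ with $\|g\|_{L^2}\leq 1$. Indeed, $L^2$-duality yields
\[
\bigg(\fint_{B_{x_0^\perp/8}(x_0)}|\nabla \theta_i|^2\,\dx\bigg)^{\!1/2}
\;\lesssim\; |B_{x_0^\perp/8}(x_0)|^{-1/2}\sup_g |F_g|,
\]
so it suffices to establish the analogous moment bound for $F_g$, uniformly in such $g$. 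The remainder of the argument splits $F_g = (F_g-\langle F_g\rangle) + \langle F_g\rangle$ and controls the two summands independently, which is precisely the role of the two propositions referenced in the overview (\ref{sens_Prop1} and \ref{Systematic_error}).

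For the \emph{fluctuation} term, I would apply the spectral gap inequality (A3). Differentiating the defining system $-\nabla\cdot(a\nabla\theta_i)=0$ with $\theta_i|_{\partial\Hd}=\phi_i$ under a perturbation $\delta a$ and testing against the adjoint Green's function $h$ solving $-\nabla\cdot(a^\transpose\nabla h)=-\nabla\cdot g$ in $\Hd$ with $h|_{\partial\Hd}=0$, one expresses $\partial F_g/\partial a$ in terms of a product of $\nabla\theta_i$ and $\nabla h$ (together with the sensitivity of the boundary data $\phi_i$, which produces a similar structure with the whole-space corrector). Inserting the inductive hypothesis \eqref{assumption_iterative_lemma} locally around each perturbation site $y$, together with the standard decay $|\nabla h(y)|\lesssim (|y-x_0|\vee x_0^\perp)^{1-d}$ and its improved near-boundary decay, the resulting square integral over $\Hd$ is controlled by the central limit theorem scaling $(1+x_0^\perp/\varepsilon)^{-d}$, producing the $\tfrac{d}{2}-\delta$ entry on the right-hand side of \eqref{result_iterative_lemma} (with the $\delta$-loss absorbing borderline logarithms).

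For the \emph{expectation} term, I would invoke Proposition \ref{Systematic_error}: a two-scale expansion for $\theta_i$ that exploits the tangential stationarity of $\theta_i$ and $\phi_i$ (together with the higher-order whole-space expansion error estimates in the style of \cite{BellaFehrmanFischerOtto}) gains one full order of decay compared to the fluctuation scaling and yields
\[
|\langle F_g\rangle|\lesssim \bigg(1+\frac{x_0^\perp}{\varepsilon}\bigg)^{-(n+1-\delta)},
\]
which contributes the other entry in the $\min$. Combining the two bounds via Minkowski in $L^q_\omega$ and tracking the polynomial-in-$q$ losses from the spectral gap moment estimates and from the two-scale expansion produces \eqref{result_iterative_lemma} with some $\tilde m$ depending only on $m$, $d$, $\alpha$, $\delta$. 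The gap between the hypothesis ball $B_{x_0^\perp/2}(x_0)$ and the conclusion ball $B_{x_0^\perp/8}(x_0)$ is exactly what is needed to apply \eqref{assumption_iterative_lemma} at centers $y$ lying in a slightly enlarged neighborhood of $x_0$ during the sensitivity integration, thus leaving a Caccioppoli buffer for the duality step.

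\textbf{Main obstacle.} The principal difficulty lies in the sensitivity analysis: one must (i) avoid losing the boundary-induced decay of $\nabla h$ when Dirichlet conditions on $\partial\Hd$ interact with the spectral gap integral over all of $\mathbb{R}^d$, (ii) use the microscale regularity supplied by (A4) to convert the $L^2$ control of $\nabla\theta_i$ from \eqref{assumption_iterative_lemma} into the pointwise $\frac{\partial\xi}{\partial a}$-type terms demanded by \eqref{spectral_gap_L2}, and (iii) propagate the $q$-dependence with only polynomial losses so that the combined bound retains the stretched-exponential moment structure. The expectation bound is structurally cleaner but requires nontrivial bookkeeping in the two-scale expansion near $\partial\Hd$, which is the technical heart of Proposition \ref{Systematic_error}.
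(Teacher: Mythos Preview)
Your overall architecture (split $F_g$ into fluctuation plus expectation and invoke Propositions~\ref{sens_Prop1} and~\ref{Systematic_error}) is indeed the paper's, but the duality step you begin with does not work as written and this is precisely the difficulty the paper spends Lemma~\ref{FunctionalsToNormLemma} on.

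The identity
\[
\bigg(\fint_{B_{x_0^\perp/8}(x_0)}|\nabla \theta_i|^2\,\dx\bigg)^{1/2}
= |B_{x_0^\perp/8}(x_0)|^{-1/2}\sup_{\|g\|_{L^2}\le 1}\, F_g
\]
holds \emph{pointwise in $a$}, so after taking the $L^q_\omega$ norm the supremum is inside the expectation. A uniform bound on $\langle |F_g|^{2q}\rangle^{1/2q}$ over deterministic $g$ does \emph{not} control $\langle(\sup_g F_g)^{2q}\rangle^{1/2q}$, because the maximizing $g$ is the random field $\nabla\theta_i/\|\nabla\theta_i\|_{L^2(B)}$ and the supremum is over an uncountable family. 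You cannot interchange sup and expectation here without an additional argument. Moreover, even if you could, Propositions~\ref{sens_Prop1} and~\ref{Systematic_error} do not apply to a bare $L^2$-normalized $g$: the former needs $g\in L^{2p}$ with $p>1$ (see \eqref{g_avg}), and the latter needs in addition a bound on $\nabla g$ (see \eqref{g_avg_2}), neither of which the duality maximizer possesses.

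The paper's fix is Lemma~\ref{FunctionalsToNormLemma}: for an $a$-harmonic function, the $L^2$ average of the gradient on a small ball is bounded by the average on a larger ball times an arbitrarily small prefactor, plus a \emph{finite, deterministic} collection of functionals $\int g_{k,m}\cdot\nabla u$ with smooth $g_{k,m}$ satisfying both $|g_{k,m}|\le (2^kR^{1-\delta})^{-d}$ and $|\nabla g_{k,m}|\le (2^kR^{1-\delta})^{-d-1}$. Because there are only $M(\lambda,\delta)\times O(\log R)$ of them and they are deterministic, one can now legitimately take moments functional by functional and sum. The $\delta$-loss in the final exponent is exactly the price paid for this reduction (the scales run from $R^{1-\delta}$ to $R$). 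Your proposal skips this step entirely, which is the genuine gap.
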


To initiate the iterative application of Proposition \ref{iterative_lemma}, we require access to an estimate of the form \eqref{assumption_iterative_lemma}. For this we post-process \eqref{intermediate} of Proposition \ref{intermediate_lemma} as
\begin{align}
\label{post_processed_intialize}
\begin{split}
\en{  \bigg(\fint_{B_{\frac{x_0^{\perp}}{2}}(x_0)}|\eps\nabla \theta^{\eps}_i|^2  \dx \bigg)^{\frac{q}{2}}  }^{\frac{1}{q}} & \leq  \bigg(\fint_{B_{\frac{x_0^{\perp}}{2}}(x_0)} \en{ |\eps\nabla \theta^{\eps}_i|^q } \dx  \bigg)^{\frac{1}{q}}\\
& \stackrel{\eqref{intermediate}}{\lesssim}   \bigg(1 + \frac{x_0^{\perp}}{\varepsilon}\bigg)^{-\frac{1}{3}} \bigg(\fint_{B_{\frac{x_0^{\perp}}{2}}(x_0)} \en{\C(a,x)^q} \dd x \bigg)^{\frac{1}{q}}
\\
&\lesssim  q^m \bigg(1 + \frac{x_0^{\perp}}{\varepsilon}\bigg)^{-\frac{1}{3}}
\end{split}
\end{align}
for any $x_0 \in \Hd$ and any $q\geq 2$. 

Using \eqref{post_processed_intialize} to initialize the use of Proposition \ref{iterative_lemma} with $n=\frac{1}{3}$, we obtain:

\begin{proposition} 
\label{intermediate_2}
Under the assumptions of Theorem \ref{thm_1}, there exists a constant $m>0$ such that
\begin{align}
\label{step_1}
\en{  \bigg(\fint_{B_{\frac{x_0^{\perp}}{8}}(x_0)}|\eps \nabla \theta^{\eps}_i |^2 \dx \bigg)^{\frac{q}{2}}  }^{\frac{1}{q}}\lesssim_{d, \lambda, \nu, \alpha, \delta} q^m \bigg(1+ \frac{x_0^{\perp}}{\varepsilon}\bigg)^{-\frac{d}{2}+ \delta}
\end{align}
holds for any $x_0 \in \Hd$, any $q \geq 1$, and any $0<\delta \ll 1$.
\end{proposition}

\medskip 

With all of these ingredients, the proof of Theorem \ref{thm_1} is quite quick:

\begin{proof}[Proof of Theorem \ref{thm_1}]
By scaling, since we are on the half-space, we may assume that $\eps=1$.
As already discussed in Section \ref{overview_thm1}, it only remains to upgrade \eqref{step_1} of Proposition~\ref{intermediate_2} into the desired \eqref{main_goal}. Throughout this proof, $m >0$ is generic and may change from line to line.
We separately consider the two cases $x_0^{\perp} \leq 8$ and $x_0^{\perp}\geq 8$ --we start with the first case.

For $x_0^{\perp} \leq 8$ we begin by applying the Lipschitz regularity estimate (see, \eg, \cite[Corollary~5]{BFJR_1}) to $\theta_i-\phi_i$ and taking the $q$-th moment of both sides. After an additional application of H\"older's inequality this yields
\begin{align}
		\label{thm_1_schauder}
		&\sup_{x \in B_{1/2}(x_0) \cap \Hd} \en{|\nabla \theta_i(x)|^q}^{\frac{1}{q}}
		\\&
		\nonumber
		\lesssim q^m \bigg[ \en{\bigg( \int_{B_2(x_0) \cap \Hd} (|\nabla \theta_i|^2  + |\nabla \phi_i|^2 )\dx  \bigg)^q}^{\frac{1}{2q}}
		+
		\en{ \| a \|^{2q}_{C^{0, \alpha}(B_{2}(x_0)\cap \Hd)}}^{\frac{1}{2q}}\bigg].
	\end{align}
In this case, Proposition~\ref{intermediate_lemma} bounds the first half of the first term on the right-hand side as
\begin{align*}
\en{\bigg( \int_{B_2(x_0)\cap \Hd} |\nabla \theta_i|^2 \dx  \bigg)^{q}}^{\frac{1}{2q}} &    \lesssim q^m.
\end{align*}
The second half of the first term on the right-hand side of \eqref{thm_1_schauder} is bounded thanks to the Caccioppoli estimate combined with \eqref{GNO_bound_balls}. Last, the second term on the right-hand side of \eqref{thm_1_schauder} is bounded via assumption (A4).
This finishes our treatment of the case $x_0^{\perp} \leq 8$. 

When $x_0^{\perp} \geq 8$ we use the pointwise gradient bound \cite[Corollary~4]{BFJR_1} (which we apply to $\theta_i$ for $r:=x_0^\perp/8$, so that $\theta_i$ is not required to satisfy any boundary conditions) to upgrade \eqref{step_1} of Proposition~\ref{intermediate_2} to
\begin{align*}
\en{|\nabla \theta_i(x_0)|^q}^{1/q} &  \lesssim  \en{\bigg( \fint_{B_{\frac{x_0^{\perp}}{8}}(x_0) } |\nabla \theta_i|^2  \dx\bigg)^{q}}^{\frac{1}{2q}}  \lesssim q^m ( 1 + x_0^{\perp})^{-\frac d2+\delta}.
\end{align*} 
This finishes our argument.
\end{proof}




 \subsection{Argument for Proposition \ref{iterative_lemma}: Iterative improvement of decay estimate for the boundary layer}
 \label{proof_Prop_2}
 
To motivate the different sections of this paper, we now immediately give the argument for Proposition \ref{iterative_lemma}. To begin, we give the required auxiliary results --the first of which is an estimate that controls the gradient of an $a$-harmonic function in terms of linear functionals of the gradient, plus a small remainder term.


\begin{lemma}
\label{FunctionalsToNormLemma}
Let $a$ be a uniformly elliptic and bounded coefficient field in the sense of assumption (A1). Let $u$ be $a$-harmonic in a ball $B_R (x_0)$, $R\geq 1$ and $x_0 \in \mathbb{R}^d$. Then, for any $\delta>0$ and $A<\infty$, there exists a set of functions $g_{m,k}$, $1\leq m\leq M=M(d,\lambda,\delta,A)$, $1\leq k\leq \lfloor\log_2 \frac{R}{R^{1-\delta}} \rfloor$, subject to the bounds
\begin{align}
|g_{k,m}| &\leq (2^k R^{1-\delta})^{-d}, \label{mult_cond_1}
\\
|\nabla g_{k,m}| &\leq (2^k R^{1-\delta})^{-1-d}, \label{mult_cond_2}
\\
\supp g_{k,m} &\subseteq B_{2^k R^{1-\delta} (x_0)},  \label{mult_cond_3}
\end{align}
and such that the estimate
\begin{align}
\label{BoundForNormByFunctionals}
\fint_{B_{R^{1-\delta}}(x_0)} |\nabla u|^2 \dd x
\leq& R^{-A} \fint_{B_R(x_0)} |\nabla u|^2 \dd x
\\&\nonumber
+C(d, \lambda, \delta, A) \sum_{k=1}^{\lfloor\log_2 \frac{R}{R^{1-\delta}} \rfloor} (2^k)^{-A/\delta}
\sum_{m=1}^M \bigg|\int_{B_R(x_0)} g_{k,m} \cdot \nabla u \dd x\bigg|^2
\end{align}
holds. 
\end{lemma}
As the proof is rather short, we directly provide it here.

\begin{proof}
We first notice that for any $\tilde \delta>0$ there exists a finite set of functions $g_m$, $1\leq m\leq M$, supported in $B_R(x_0)$ and satisfying $|g_m|\leq R^{-d}$, $|\nabla g_m|\leq R^{-d-1}$, such that any $a$-harmonic function $u$ on $B_R(x_0)$ satisfies
\begin{align}
\label{SingleStep}
\int_{B_{R/2}(x_0)} |\nabla u|^2 \dd x
\leq \tilde \delta \int_{B_R(x_0)} |\nabla u|^2 \dd x
+C(d, \lambda)\sum_{m=1}^M \bigg|\int_{B_R(x_0)} g_{m} \cdot \nabla u \,\dd x\bigg|^2.
\end{align}
In fact, this may be obtained by a suitable variant of, \eg, \cite[Lemma~12]{BellaFehrmanFischerOtto} combined with the Caccioppoli inequality.

Iterating this bound $K:=\lfloor \log_2 \frac{R}{R^{1-\delta}}\rfloor$ many times, we arrive at
\begin{align*}
\fint_{B_{2^{-K} R}(x_0)} |\nabla u|^2 \dd x
\leq& (2^d \tilde \delta)^{K} \fint_{B_R(x_0)} |\nabla u|^2 \dd x
\\&
+C\sum_{k=1}^K \sum_{m=1}^M (2^d \tilde \delta)^{k-1} \bigg|\int_{B_R(x_0)} g_{k,m} \cdot \nabla u \dd x\bigg|^2
\end{align*}
with $g_{k,m}(x):=(2^{K-k})^{d} g_m(2^{K-k}x)$.
Choosing $\tilde \delta>0$ small enough, this implies the desired estimate.
\end{proof}

\medskip

Lemma~\ref{FunctionalsToNormLemma} reduces the proof of Proposition \ref{iterative_lemma} to estimating linear functionals of $\eps \nabla \theta^{\eps}_i$, the main challenge being to control the $2q$-th stochastic moments of 
\begin{align}
\label{def_Fkm}
F^\eps_{k,m}:=\int_{B_{\frac{x_0^{\perp}}{2}} (x_0)} g_{k,m} \cdot \eps \nabla \theta^{\eps}_i \dx.
\end{align}
For this, we begin by applying the triangle inequality as
\begin{align}
\label{triangle_expect}
\en{|F^\eps_{k,m}|^{2q} }^{\frac{1}{2q}} \leq \en{ |F^\eps_{k,m} - \en{F^\eps_{k,m} } |^{2q} }^{\frac{1}{2q}} + \big|\en{F^\eps_{k,m}} \big|.
\end{align}
Notice that if we were deriving estimates for the whole-space corrector $\phi^{\eps}_i$ instead of for the boundary layer corrector $\eps \theta^{\eps}_i$ (\ie, if we were to replace $\eps \theta^{\eps}_i \rightsquigarrow \phi^{\eps}_i$ in \eqref{def_Fkm}), then the expected value $\langle F^\eps_{k,m} \rangle$ would vanish due to the stationarity of $\nabla \phi^{\eps}_i$ and the sublinear growth of $\phi^{\eps}_i$ (see, \eg, \cite{PapanicolaouVaradhan, Kozlov}). Thus, in this case, the problem would entirely reduce to that of moment bounds for the fluctuations of $F^\eps_{k,m}$. In our situation, however, since $\nabla \theta^{\eps}_i$ is not stationary in the direction perpendicular to $\partial \Hd$ (\ie, in the $e_1$-direction), we need to control both terms on the right-hand side of \eqref{triangle_expect}. The fluctuation bounds are obtained in Proposition \ref{sens_Prop1} and the expected value $\langle F^\eps_{k,m} \rangle $ is treated in Proposition \ref{Systematic_error}, the latter being one of the novel contributions of this paper. 


Next, we state our estimate on the fluctuations of (linear functionals of) the boundary layer corrector.
While in the present paper we are concerned with the case of a half-space $\Hd$, in a subsequent paper we do plan to handle the case of bounded $C^{1,1}$-domains. To avoid excessive repetition in this upcoming work, we choose to prove the fluctuations estimate in Proposition \ref{sens_Prop1} below for both the case of a half-space $\Hd$ and the case of a bounded $C^{1,1}$-domain, the proof being extremely similar.

\begin{proposition}\label{sens_Prop1}
Let $d \geq 3$, $i\in \{1, \ldots, d\}$, and let $\langle\cdot\rangle$ be an ensemble of coefficient fields on $\Rd^d$ that satisfies the assumptions (A1)-(A4). Furthermore, let $\domain$ denote either a bounded $C^{1,1}$-domain or $\Hd$; let $f \in W^{1,\infty}(\mathbb{R}^d)$ be smooth such that, in the case that $\domain = \Hd$, $f \equiv 1$ and, if $\domain$ is bounded, $f$ is supported in a bounded neighborhood of $\domain$. Let $p$ satisfy $0< p-1 \ll1$, $r \geq \varepsilon$, and $x_0 \in \domain$. Let $g  \in L^{2p}( \Rd^d ; \Rd^d)$ satisfy $\supp(g) \subset B_r(x_0)$ with $\dist(B_r(x_0), \partial\domain) \ge \frac{r}{4}$ and 
\begin{align}
\label{g_avg}
\bigg( \fint_{B_r(x_0)} |g  |^{2p} \dx \bigg)^{\frac{1}{2p}} \leq r^{-d}.
\end{align}
Consider $u_{\eps}$ solving
\begin{equation}
\begin{aligned}\label{sens_defu}
 -\nabla \cdot (a^{\eps}  \nabla u_{\eps}) &= 0 && \textrm{ in } \domain\\
  u_{\eps} &= f \phi_i\big(\tfrac{\cdot}{\eps}\big)  && \textrm{ on } \partial\domain
\end{aligned}
\end{equation}
and the random variable $F^{\eps}$ defined as 
\begin{align}
\label{defn_F}
 F^{\eps} := \int_\domain g\cdot  \eps \nabla u_{\eps} \dx.
\end{align}
Then there exists a constant $\kappa = \kappa(d, \lambda, \nu, \alpha, \beta, \domain, f) < \infty$ such that
\begin{align}
\label{sens_eqP1}
\en{ \big|F^{\eps} -\en{F^{\eps}}\big|^{2q}}^{\frac{1}{2q}} \lesssim_{d, \lambda, \nu, \alpha, \beta, \domain, f } q^{\kappa} \bigg(
\Big(\frac{\eps}{r}\Big)^{\frac{\beta d}{2}}
&\bigg( \frac{\varepsilon}{\dist(x_0,\partial\domain) +\varepsilon} \bigg)^{\frac {(1- \beta) d}{2} }
\\&\nonumber
+   \frac{\varepsilon}{\dist(x_0,\partial\domain)+ \varepsilon} \bigg( \frac{\varepsilon}{r} \bigg)^{\frac d2 }  \bigg)
\end{align}
holds for any $q \geq 1$ and any $0< \beta\ll1$.
\end{proposition}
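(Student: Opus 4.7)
I would prove Proposition~\ref{sens_Prop1} via a sensitivity-plus-duality computation for the Malliavin derivative $\partial F/\partial a$, followed by the $L^{2q}$ version of the spectral gap inequality (A3). Perturbing $a \mapsto a + h\delta a$ with $\delta a$ supported in $B_\varepsilon(z)$, the linearised problem yields $\delta u$ satisfying $-\nabla\cdot(a\nabla \delta u)=\nabla\cdot(\delta a\nabla u)$ in $\domain$ with the inhomogeneous Dirichlet datum $\delta u = f\delta\phi_i$ on $\partial\domain$, where the perturbed whole-space corrector $\delta\phi_i$ solves $-\nabla\cdot(a\nabla \delta\phi_i) = \nabla\cdot(\delta a(e_i+\nabla\phi_i))$ on $\Rd^d$.

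\textbf{Duality.} Introduce the Dirichlet dual $w$ on $\domain$ defined by $-\nabla\cdot(a^\transpose\nabla w)=\nabla\cdot g$ with $w=0$ on $\partial\domain$, together with the whole-space dual $\tilde w$ solving $-\nabla\cdot(a^\transpose\nabla\tilde w)=\nabla\cdot g$ on $\Rd^d$. A first application of Green's identity to $(\delta u,w)$ on $\domain$ converts $\delta F = \int g\cdot\nabla\delta u\,\dx$ into $\int\delta a\,\nabla u\cdot\nabla w\,\dx$ plus the surface integral $\int_{\partial\domain} f\delta\phi_i\,(a^\transpose\nabla w)\cdot\vec n\,dS$. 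A second Green's identity applied to $(\delta\phi_i,w)$ on $\domain$, combined with the corrector equation for $\delta\phi_i$, rewrites this surface integral as the sum of a bulk term $\int_\domain \delta a(e_i+\nabla\phi_i)\cdot\nabla w\,\dx$ and the stray piece $\int_\domain g\cdot\nabla\delta\phi_i\,\dx$; the stray piece is then processed, via the whole-space dual $\tilde w$ and the corrector equation once more, into $\int_{\Rd^d}\delta a(e_i+\nabla\phi_i)\cdot\nabla\tilde w\,\dx$. The net result is a pointwise representation
\begin{equation*}
\bigl|\partial_{\delta a}F\bigr|(z)
\;\lesssim\;
\int_{B_\varepsilon(z)\cap\domain}|\nabla w|\bigl(|\nabla u|+|e_i+\nabla\phi_i|\bigr)\dx
+\int_{B_\varepsilon(z)}|\nabla\tilde w|\,|e_i+\nabla\phi_i|\dx.
\end{equation*}

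\textbf{Weights and assembly.} Step three is to substitute into the spectral gap inequality and organise the resulting $z$-integral. Annealed Green's function estimates on $C^{1,1}$-domains (including a Meyers' $L^{2p}$-improvement for $p-1\ll 1$, which is the source of the eventual parameter $\beta$) deliver $\bigl(\fint_{B_\varepsilon(z)}|\nabla w|^{2p}\bigr)^{1/(2p)}\lesssim (|z-x_0|+r)^{-d}$ with an analogous bound for $\tilde w$ on $\Rd^d$. The factor $|\nabla u|$ is controlled by the boundary-layer decay of the $a$-harmonic extension of $f\phi_i$: annealed interior regularity plus stationarity of $\nabla\phi_i$ gives $\bigl(\fint_{B_\varepsilon(z)}|\nabla u|^{2}\bigr)^{1/2}\lesssim (\dist(z,\partial\domain)+\varepsilon)^{-d/2}$ in a high-moment averaged sense, while $|e_i+\nabla\phi_i|$ carries only uniform stationary moments. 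Taking $L^{2q}$-moments after substitution and splitting the $z$-integration into (i) the near-field $|z-x_0|\lesssim r$ and (ii) the far-field $|z-x_0|\gtrsim r$, the near-field contribution produces the pure CLT scaling $(\varepsilon/r)^{d/2}$ multiplied by the boundary-decay factor $\varepsilon/\dist(x_0,\partial\domain)$ coming from $|\nabla u|$, yielding the second term of \eqref{sens_eqP1}; the far-field contribution is estimated by H\"older interpolation with parameter $\beta$ between the two competing length scales $r$ and $\dist(x_0,\partial\domain)$, producing the first term $(\varepsilon/r)^{\beta d/2}(\varepsilon/\dist)^{(1-\beta)d/2}$.

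\textbf{Main obstacle.} The principal difficulty lies in producing \emph{annealed} Green's-function bounds on $\nabla w$ and on its normal flux $(a^\transpose\nabla w)\cdot\vec n$ with integrability strictly above $L^2$, uniformly in the $C^{1,1}$ geometry of $\domain$ and in the admissible cutoff $f$, so that they mesh cleanly with the $L^{2p}$-hypothesis \eqref{g_avg} on $g$ and with the stationary moment bounds on $\nabla\phi_i$ and $\nabla u$. The loss $\beta\ll 1$ in \eqref{sens_eqP1} precisely reflects the Meyers' improvement available; once these deterministic dual bounds are in hand, the remaining argument is careful bookkeeping of the competing length scales $\varepsilon$, $r$, and $\dist(x_0,\partial\domain)$.
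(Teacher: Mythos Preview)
Your duality computation is essentially correct up to the point where you display the Malliavin derivative; indeed, with $f\equiv 1$ your identity reads
\[
\delta F=\int_\domain \delta a\,\nabla u\cdot\nabla w-\int_\domain \delta a\,(e_i+\nabla\phi_i)\cdot\nabla w+\int_{\mathbb{R}^d}\delta a\,(e_i+\nabla\phi_i)\cdot\nabla\tilde w,
\]
which matches the paper's formula \eqref{MalliavinDerivative}. The gap appears in the next line: by passing to the pointwise bound
\[
|\partial_{\delta a}F|(z)\lesssim\int_{B_\varepsilon(z)\cap\domain}|\nabla w|\big(|\nabla u|+|e_i+\nabla\phi_i|\big)\,\dd y+\int_{B_\varepsilon(z)}|\nabla\tilde w|\,|e_i+\nabla\phi_i|\,\dd y,
\]
you have destroyed the cancellation between the second and third terms. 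The isolated contribution $(e_i+\nabla\phi_i)\otimes\nabla\tilde w$ alone already gives, after the spectral gap, a bound of order $(\varepsilon/r)^{d/2}$ with no $\dist(x_0,\partial\domain)$ factor; take $r=\varepsilon$ and $\dist(x_0,\partial\domain)\gg\varepsilon$ to see that this is strictly larger than the right-hand side of \eqref{sens_eqP1}. The same happens with $(e_i+\nabla\phi_i)\otimes\chi_\domain\nabla w$.

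The entire point of the paper's treatment is to keep the combination $\chi_\domain\nabla w-\nabla\tilde w$ intact and to show that it is small when $x_0$ is far from the boundary: this is Lemma~\ref{sens_Lm2}, which gives a weighted $L^{2p}$ bound on $\nabla(h-\hat h)$ in $\domain$ with the gain $\big(\dist(x_0,\partial\domain)/(r+r^\domain_*)\big)^{-\alpha_1/2p}$. The paper then repackages this via the auxiliary whole-space solutions $v_1,v_2$ (see \eqref{sens_def_v1}--\eqref{sens_def_v2} and Corollary~\ref{sens_Cor2}), so that the second piece of the derivative becomes $(e_i+\nabla\phi_i)\otimes(\nabla v_1+\nabla v_2)$ with $\nabla v_j$ carrying the required boundary-distance decay. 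Without this step, your ``far-field H\"older interpolation'' cannot produce the factor $(\varepsilon/\dist)^{(1-\beta)d/2}$.

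A secondary issue: you invoke the annealed decay $\big(\fint_{B_\varepsilon(z)}|\nabla u|^2\big)^{1/2}\lesssim(\dist(z,\partial\domain)+\varepsilon)^{-d/2}$, but this rate is exactly what the whole iteration in the paper is designed to establish and is \emph{not} available as input to Proposition~\ref{sens_Prop1}. The only a priori bound on $\nabla u$ used here is the first-order decay $\langle|\nabla u(x_0)|^q\rangle^{1/q}\lesssim\varepsilon/(\dist(x_0,\partial\domain)+\varepsilon)$ of Lemma~\ref{sens_LmEstu}, and this is what accounts for the second summand $\frac{\varepsilon}{\dist}\,(\varepsilon/r)^{d/2}$ in \eqref{sens_eqP1}.
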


\noindent In our argument for Proposition \ref{sens_Prop1} we rely on the spectral gap inequality \eqref{spectral_gap_L2} --in particular, for $\xi = F^{\eps}$, we express the Malliavin derivative on the right-hand side of \eqref{spectral_gap_L2} in terms of auxiliary functions $h^{\eps}$, $\hat{h}^{\eps}$, $v^{\eps}_1$, and $v^{\eps}_2$ solving \eqref{Definitionh}, \eqref{Definitionhath}, \eqref{sens_def_v1}, and \eqref{sens_def_v2} respectively. We then proceed to estimate these auxiliary functions via elliptic regularity estimates --the ground work for which was laid in \cite{BFJR_1}. The full argument is contained in Section \ref{sens_analysis} and is similar to previous treatments found, \eg,  in \cite{GNO5, FN_2020, BellaFehrmanFischerOtto, Bella_Kniely}. 

Next, we state our estimate on the expected value $\mathbb{E}[\eps \nabla\theta_i^\eps(x)]$ of the boundary layer corrector gradient; recall that we employ this result to bound the term $\langle F^\eps_{k,m} \rangle$ on the right-hand side of \eqref{triangle_expect}.
We emphasize that Proposition \ref{Systematic_error} relies critically on the stationarity properties of $\theta^\eps_i$ and does not extend to the case of a bounded $C^{1,1}$-domain.\\
\begin{proposition}
\label{Systematic_error}
Let $d \geq 3$, let $i\in \{1, \ldots, d\}$, and let $\langle\cdot\rangle$ be an ensemble of coefficient fields that satisfies the assumptions (A1)-(A4). Suppose that there exists $n>0$ such that 
\begin{equation}\label{M:Hypo:1}
\en{ \left|\eps\nabla \theta^\eps_i (x) \right|^2 }^{\frac{1}{2}} \lesssim_{d, \lambda, \nu, \alpha} \bigg(1+\frac{x^\perp}{\varepsilon}\bigg)^{-n}
\end{equation}
holds for any $x\in \Hd$. Let $r \in [\varepsilon,\tfrac{1}{4}x_0^\perp]$, let $x_0 \in \Hd$, and let $g  \in H^1(\Hd ; \Rd^d)$ satisfy $\supp g\subset B_r(x_0)$ and
\begin{align}
\label{g_avg_2}
\bigg( \fint_{B_r(x_0)} |\nabla g|^{2} \dx \bigg)^{\frac{1}{2}} \leq r^{-(d +1)}.
\end{align}
Then the expected value $\langle F^{\eps} \rangle$ of the linear functional $F^\eps:=\int_{\Hd} g \cdot \eps \nabla \theta^\eps_i \dx$ satisfies the improved decay estimate
\begin{align}
\label{M:Estim:01}
 \left|\left\langle F^\eps \right\rangle\right| \lesssim_{d,\lambda, \nu, \alpha} \frac{\eps}{r} \bigg(1+\frac{x_0^\perp}{\varepsilon}\bigg)^{-n}.
\end{align} 
\end{proposition}
\noindent Our argument for Proposition \ref{Systematic_error} is given in Section \ref{expectation}. The underlying idea is to use a two-scale expansion approximation for the ``Green's function-like'' function $h^{\eps}$ solving $-\nabla \cdot (a^\transpose\big(\tfrac{\cdot}{\eps}\big) \nabla h^{\eps})=\nabla \cdot g$ to reduce --up to the higher-order error term resulting in the right-hand side of \eqref{M:Estim:01}-- the expression for $F^{\eps}$ to a corresponding expression involving only the correctors (whole-space and boundary layer) and the homogenized analogue $\bar{h}$ solving $-\nabla \cdot (\bar a^\transpose \nabla \bar h) = \nabla \cdot g$. One then exploits stationarity tangentially to $\partial \Hd$ to conclude that the remaining expressions in fact vanish.

\bigskip
With all of the ingredients for the proof of Proposition \ref{iterative_lemma} ready, we now proceed to its proof.
\begin{proof}[Proof of Proposition~\ref{iterative_lemma}] \Wlog set $\varepsilon =1$. Furthermore, we may assume that $x_0^\perp \geq 8$ as otherwise \eqref{result_iterative_lemma} follows trivially from \eqref{assumption_iterative_lemma}. Now, set $R:= \frac{x_0^\perp}{8}$. Inserting our assumption \eqref{assumption_iterative_lemma} into Lemma~\ref{FunctionalsToNormLemma} (applied for $A=1$), we obtain
\begin{align*}
&\bigg\langle
\bigg(
\fint_{B_{R^{1-\delta}}(x_0)} |\nabla \theta_i|^2 \, \dd x\bigg)^\frac{q}{2}
\bigg\rangle^{\frac1q}
\\&
\lesssim
q^m (1+ x_0^{\perp} )^{-n-1}
+\sum_{k=1}^{\lfloor\log_2 \frac{R}{R^{1-\delta}} \rfloor}
(2^k)^{-\frac{1}{\delta}}
\bigg\langle \bigg(
\sum_{m=1}^M \bigg|\int_{B_R(x_0)} g_{k,m} \cdot \nabla \theta_i \, \dd x\bigg|^2
\bigg)^\frac{q}{2} \bigg\rangle^{\frac1q}.
\end{align*}
For the second term on the right-hand side we now use \eqref{triangle_expect}. To bound the corresponding fluctuations we apply Proposition \ref{sens_Prop1} with $r = 2^{k} R^{1-\delta}$ and $g =g_{k,m}$. The conditions on $r$ and $g$ in Proposition \ref{sens_Prop1} follow from \eqref{mult_cond_1} and \eqref{mult_cond_3}. To estimate the corresponding expected values we apply Proposition \ref{Systematic_error} --again with $g = g_{k,m}$ and $r = 2^{k} R^{1-\delta}$, where the assumption \eqref{g_avg_2} follows from \eqref{mult_cond_2}. Furthermore, we obtain the pointwise bound \eqref{M:Hypo:1} from our hypothesis \eqref{assumption_iterative_lemma} and a down-propagation to a pointwise bound via \cite[Corollary~4]{BFJR_1}.  

Combining all of these ingredients, we obtain
\begin{align*}
& \bigg\langle
\bigg( \fint_{B_{R^{1-\delta}}(x_0)} |\nabla \theta_i|^2 \dd x\bigg)^\frac{q}{2}
\bigg\rangle^{\frac1q} \\
& \lesssim
q^m (1+ x_0^{\perp} )^{-n-1}
\\
&
 \,\, +q^{\kappa} \hspace{-.1cm}
\sum_{k=1}^{\lfloor\log_2 \frac{R}{R^{1-\delta}} \rfloor}
(2^k)^{-\frac{1}{\delta}}
\bigg(
\Big(\frac{1}{2^k R^{1-\delta}}\Big)^{\beta\frac{d}{2}}
(1+x_0^\perp)^{(\beta-1)\frac d2 } +  (1 +x_0^\perp)^{-1} \bigg( \frac{1}{2^kR^{1-\delta}} \bigg)^{\frac d2 }  \bigg)
\\&
\,\,+\sum_{k=1}^{\lfloor\log_2 \frac{R}{R^{1-\delta}} \rfloor}
(2^k)^{-\frac{1}{\delta}}
\frac{1}{2^kR^{1-\delta}}
(1+x_0^\perp)^{-n}. 
\end{align*}
Estimating the sums, inserting the definition of $R$, and choosing $\beta$ sufficiently small (depending on $\delta$), we obtain
\begin{align*}
&\bigg\langle
\bigg(
\fint_{B_{\big(\frac{x_0^\perp}{8} \big)^{1-\delta}}(x_0)} |\nabla \theta_i|^2 \dd x\bigg)^q
\bigg\rangle^{\frac1q}
\lesssim_{\delta}    
q^m (1+ x_0^{\perp} )^{-n-1+\delta}
+q^{\kappa} (1+ x_0^{\perp} )^{-(1-\delta)\frac d2}.
\end{align*}
Letting $x_0$ vary over a ball of radius $x_0^\perp/8$ around $x_0$ and integrating proves Proposition~\ref{iterative_lemma} (up to renaming $\delta$).
\end{proof}

\subsection{Proof of Theorem \ref{thm_2}}

As the proof of our second main result, Theorem~\ref{thm_2}, is rather short, we directly give it here.

\begin{proof}[Proof of Theorem~\ref{thm_2}] By a rescaling argument, we may assume that $\eps=1$.

\smallskip

Our argument comes in four steps: In Step 1 we decompose $\phi_i^{L}$ solving \eqref{box_corrector} in terms of the whole-space corrector and the boundary layer correctors corresponding to the faces of $[0,L]^d$ --in our decomposition, whenever we subtract a boundary layer corrector from the whole-space corrector, we must compensate for this new contribution along the other faces of the cube (this is done via $\gamma_{i,m}^{L}$ and $\tau_{i,m}^{L}$ below). The contributions introduced via $\gamma_{i,m}^{L}$ and $\tau_{i,m}^{L}$ are estimated in Steps 2 and 3. In Step 4, we conclude our argument. \\

\smallskip

\noindent  {\bf Step 1: Representation formula for $\phi_i^{L}$.} \qquad For $1\leq m\leq 2d$ we use the convention 
\begin{align}
\mathbb{H}^m :=
\begin{cases}
 \{x\in \Rd^d :x_m>0\} & \text{ for } 1\leq m\leq d,\\
 \{x\in \Rd^d:x_{m-d}<L\} &  \text{ for } d+1\leq m\leq 2d.
\end{cases}
\end{align}
Let $\theta_i^{\mathbb{H}^m}(\infty)$ by slight abuse of notation denote the constants that the boundary layer correctors $\theta_i^{ \mathbb{H}^m}$ converge to \as as the distance to the half-space boundary becomes infinite (the existence of such constants follows from Theorem \ref{thm_1}). Furthermore, denote by $E:=(\{0,L\} \times \{0,L\} \times [0,L]^{d-2}) \cup \ldots \cup ([0,L]^{d-2}\times \{0,L\} \times \{0,L\})$ the set of all edges in case $d=3$ respectively for $d\geq 4$ the set of all $d-2$\,-dimensional hyperedges of the cube.

Note that by the defining equations, we may write
\begin{align}
\label{SumRep}
\phi_i^{L} = \phi_i - \sum_{m=1}^{2d}(\theta_i^{\mathbb{H}^m} - \gamma_{i,m}^{ L} -\tau_{i,m}^{ L}),
\end{align}
where $\gamma_{i,m}^{ L}$ is defined as the (weak) solution to the boundary value problem
\begin{equation}
\begin{aligned}
-\nabla \cdot (a\nabla \gamma_{i,m}^{ L}) &=0 &&\text{in }(0,L)^d,
\\
\gamma_{i,m}^{ L} &=  \theta_i^{\mathbb{H}^m}(\infty) &&\text{on }\partial [0,L]^d \setminus \partial \mathbb{H}^m,
\\
\gamma_{i,m}^{L} &= 0 &&\text{on }\partial [0,L]^d \cap \partial \mathbb{H}^m,
\end{aligned}
\end{equation}
and $\tau_{i,m}^{L}$ respectively of
\begin{equation}
\begin{aligned}
- \nabla \cdot (a \nabla \tau_{i,m}^{L})& =0 &&\text{in } (0,L)^d,\\
\tau_{i,m}^{L} &= \theta_i^{\mathbb{H}^m} - \theta_i^{\mathbb{H}^m}(\infty) &&\text{on }\partial [0,L]^d \setminus \partial \mathbb{H}^m,
\\
\tau_{i,m}^{L} &= 0 &&\text{on }\partial [0,L]^d \cap \partial \mathbb{H}^m.
\end{aligned}
\end{equation}
Observe that $\theta_i^{\mathbb{H}^m} - \gamma_{i,m}^{L} - \tau_{i,m}^{L} \equiv 0$ on $\partial [0,L]^d \setminus \partial \mathbb{H}^m$ and $ \theta_i^{ \mathbb{H}^m} - \gamma_{i,m}^{L} - \tau_{i,m}^{L} = \phi_i$ on $\partial [0,L]^d \cap \partial \mathbb{H}^m$.
Furthermore, in view of \eqref{main_goal} of Theorem~\ref{thm_1}, notice that the boundary conditions for $\tau_{i,m}^{L}$ imply that
\begin{equation}
\label{BoundaryEstimateTau}
\begin{aligned}
|\tau_{i,m}^{L} (x)| \leq \mathcal{C}(a,x) (1+\dist(x,\partial \mathbb{H}^m))^{1-d/2+\delta} 
&&\text{on }\partial [0,L]^d.
\end{aligned}
\end{equation}
Note that the boundary data for $\gamma_{i,m}^{L}$ and $\tau_{i,m}^{L}$ may be discontinuous at the set of edges $E$; however, this is not an issue when working in the space $H^1((0,L)^d)$, as the edges have capacity zero.

The representation \eqref{SumRep} enables us to split
\begin{align*}
&\fint_{[\kappa L,(1-\kappa)L]^d} a(e_i+\nabla \phi_i^{L}) \dd x
\\&
=\fint_{[\kappa L,(1-\kappa)L]^d} a (e_i+\nabla \phi_i) \, \dd x
-\sum_{m=1}^{2d}
\fint_{[\kappa L,(1-\kappa)L]^d} a \nabla \theta_i^{\mathbb{H}^m} \, \dd x
\\&~~~
+\sum_{m=1}^{2d}\fint_{[\kappa L,(1-\kappa)L]^d} a \nabla \gamma_{i,m}^{L} \, \dd x
+\sum_{m=1}^{2d}\fint_{[\kappa L,(1-\kappa)L]^d} a \nabla \tau_{i,m}^{L} \, \dd x.
\end{align*}
While the first term on the right-hand side is the desired one and the second term may be estimated by \eqref{main_goal} from Theorem~\ref{thm_1}, it remains to account for the contributions of the last two terms.\\

\smallskip

\noindent {\bf Step 2: Estimate on $\fint_{[\kappa L,(1-\kappa)L]^d}  \nabla \tau_{i,m}^{ L} \dd x$.} \qquad
Our first goal is to derive the bound
\begin{align}
\label{EstimateTau}
\bigg|
\fint_{[\kappa L,(1-\kappa)L]^d} \nabla \tau_{i,m}^{L} \dd x
\bigg|
+
\bigg|
\fint_{[\kappa L,(1-\kappa)L]^d}  a \nabla \tau_{i,m}^{L} \dd x
\bigg|
\leq \mathcal{C}(a)
(L^{-d/2+\delta}+L^{-2}).
\end{align}
To this aim, we define $h$ as the weak solution to $-\nabla \cdot (a^\transpose \nabla h)=\nabla \cdot ( \chi_{[\kappa L,(1-\kappa)L]^d} e_k)$ in $(0,L)^d$ with $h=0$ on $\partial [0,L]^d$. Using the equations for $h$ and (in the second step) for $\tau_{i,m}^{L}$, we obtain
\begin{align*}
& \fint_{[\kappa L,(1-\kappa)L]^d} e_k \cdot \nabla \tau_{i,m}^{L}  \, \dd x\\
&
= (1-2\kappa)^{-d} L^{-d} \int_{ \partial [0,L]^d} \tau_{i,m}^{L} \vec{n}\cdot a^\transpose \nabla h \, \dd S
-(1-2\kappa)^{-d} L^{-d} \int_{[0,L]^d} a \nabla \tau_{i,m}^{L} \cdot \nabla h \, \dd x
\\&
=(1-2\kappa)^{-d} L^{-d} \int_{\partial[0,L]^d} \tau_{i,m}^{L} \vec{n}\cdot a^\transpose \nabla h \, \dd S,
\end{align*}
where $\vec{n}$ denotes the outward normal vector. A regularity theory estimate for $\nabla h$ ($h$ being $a$-harmonic in a neighborhood of the boundary) in conjunction with the energy estimate for $h$ yields 
\begin{align}
\label{Lipschitz_corner}
|\nabla h (x)|
\leq \mathcal{C}(a,x) \bigg(\fint_{[0,L]^d} |\nabla h|^2 \dd x\bigg)^{1/2}
\leq \mathcal{C}(a,x)
\quad\text{for }x\in \partial [0,L]^d.
\end{align}
We remark that this follows from Theorem 2.2 of \cite{JosienRaithelSchaeffner} in the case that $d=2$, along with the regularity assumption (A4) on $a$. In the higher-dimensional setting, the regularity theory for harmonic functions on polygonal domains includes not only certain singular ``corner corrections'' (treated in \cite{JosienRaithelSchaeffner}), but also ``edge corrections'' --see, \eg, \cite{Dauge_book}. However, as we have shown in \cite[Theorem~7]{BFJR_1}, the convexity of the box $[0,L]^d$ allows us to get away with not including the edge contributions.

Plugging \eqref{Lipschitz_corner} into the previous  bound and using the estimate \eqref{BoundaryEstimateTau} for the boundary data of $\tau_{i,m}^{L}$ as well as $\tau_{i,m}^{L} \equiv 0$ on $\partial [0,L]^d \cap \partial \mathbb{H}_m$, we obtain
\begin{align*}
\bigg|\fint_{[\kappa L,(1-\kappa)L]^d} e_k \cdot \nabla \tau_{i,m}^{L} \dd x\bigg|
\leq \mathcal{C}(a) (L^{-d/2+\delta} + L^{-2}
).
\end{align*}
Proceeding analogously for the flux (simply inserting a factor of $ a^\transpose$ in the right-hand side of the equation for $h$), we obtain \eqref{EstimateTau}.\\

\smallskip
\noindent
{\bf Step 3: Estimate on $\fint_{[\kappa L,(1-\kappa)L]^d} \nabla \gamma_{i,m}^{L} \dd x$.} \qquad
Concerning $\gamma_{i,m}^{L}$, we employ a two-scale expansion ansatz
\begin{align}
\label{2scale_RVE}
\gamma_{i,m}^{L} = \bar \gamma_{i,m}^{L} + \phi_j^{L} \partial_j \bar \gamma_{i,m}^L + w_{i,m}^{L}
\end{align}
with $\bar \gamma_{i,m}^L$ solving
\begin{equation}
\begin{aligned}
-\nabla \cdot (\bar a \nabla \bar \gamma_{i,m}^L) &=0 &&\text{in }(0,L)^d,
\\
\bar \gamma_{i,m}^L &=\theta_i^{\mathbb{H}^m}(\infty)  &&\text{on }\partial [0,L]^d\setminus \partial \mathbb{H}^m,
\\
\bar \gamma_{i,m}^L &=0 &&\text{on }\partial [0,L]^d\cap \partial \mathbb{H}^m,
\end{aligned}
\end{equation}
and with $w_{i,m}^L$ solving
\begin{align*}
-\nabla \cdot (a\nabla w_{i,m}^L) = \nabla \cdot ((a\phi_j^L-\sigma_j) \partial_j \nabla \bar \gamma_{i,m}^L ) + a \nabla(\phi_j^L - \phi_j ) \cdot \nabla \partial_j  \bar \gamma_{i,m}^L 
\end{align*}
with homogeneous Dirichlet boundary data. Note that the equation satisfied by $w_{i,m}^L$ is obtained by the usual property of the two-scale expansion
\begin{align*}
-\nabla \cdot \bigg(a\nabla \bigg(\bar \gamma_{i,m}^L+ \phi_j\partial_j \bar \gamma_{i,m}^L\bigg)\bigg) = - \nabla \cdot ((a\phi_j-\sigma_j)\partial_j \nabla \bar \gamma_{i,m}^L)
\end{align*}
as well as $\phi_j^L-\phi_j$ being $a$-harmonic in $(0,L)^d$. Furthermore, due to the piecewise constant boundary data of $\bar \gamma_j^L$, we have that
\begin{subequations}
\begin{align}
\label{bargamma_bound_1}
|\nabla \bar \gamma_j^L(x)|&\leq C \dist(x,E)^{-1},
\\
\label{bargamma_bound}
|D^2 \bar \gamma_j^L(x)|&\leq C \dist(x,E)^{-2},
\end{align}
\end{subequations}
which can, \eg, be seen using the Poisson kernel representation.   

Defining the function $h$ as the solution to the equation $-\nabla \cdot (a^\transpose\nabla h)=\nabla \cdot ( \chi_{[\kappa L,(1-\kappa)L]^d} e_k)$ in $(0,L)^d$ with homogeneous Dirichlet boundary data, we deduce using first this equation and then the equation for $w_{i,m}^L$
\begin{align*}
&\fint_{[\kappa L,(1-\kappa)L]^d} e_k \cdot \nabla w_{i,m}^L \, \dd x\\
& =  -(1-2\kappa)^{-d} L^{-d}
\int_{(0,L)^d} a^\transpose \nabla h \cdot \nabla w_{i,m}^L \, \dd x
\\&
=(1-2\kappa)^{-d} L^{-d}
\int_{(0,L)^d} (a\phi_j^L-\sigma_j)\partial_j \nabla \bar \gamma_{i,m}^L \cdot \nabla h  \,\dd x 
\\&~~~
- (1 -2 \kappa)^{-d} L^{-d}\int_{(0,L)^d} h a \nabla ( \phi_j^L - \phi_j) \cdot \nabla \partial_j \bar \gamma_{i,m}^L \dx.
\end{align*}
Notice that by \eqref{bargamma_bound} the integrals on the right-hand side are well-defined since $|\nabla h(x)| \leq \mathcal{C}(a,x) \big(\dist(x, E)/L\big)^{\delta}$, which follows from the convexity of the box $[0,L]^d$ (see, \eg, \cite[Theorem~7]{BFJR_1}). To estimate the right-hand side we again use that $|\nabla h(x)|\leq \mathcal{C}(a,x) (\dist(x, E)/L)^{\delta}$ -- additionally using the fundamental theorem of calculus and the homogeneous Dirichlet boundary data one may also obtain the estimate $|h(x)| \leq \C(a,x) \dist(x, \partial[0,L]^d) (\dist(x, E)/L)^{\delta}$. Combining these bounds with \eqref{BoundDifferencePhiPhiL} below and using again an analogous argument for the flux contribution, we deduce
\begin{align}
\label{BoundW}
\bigg|
\fint_{[\kappa L,(1-\kappa)L]^d} \nabla w_{i,m}^L \, \dd x
\bigg|
+\bigg|
\fint_{[\kappa L,(1-\kappa)L]^d} a \nabla w_{i,m}^L \, \dd x
\bigg|
\leq \mathcal{C}(a) L^{-2} \log L.
\end{align}

For the previous bound, we have used an estimate on $\phi_i-\phi_i^L$.
To derive this bound, we deduce from \eqref{PoissonKernelEstimateAHarmonic}
\begin{align*}
|\nabla \phi_i(x_0)-\nabla \phi_i^L(x_0)|
\leq \mathcal{C}(a,x_0) \inf_b \int_{\partial [0,L]^d} \frac{\mathcal{C}(a,x) |\phi_i-b|}{|x-x_0|^d}  \,dS
\end{align*}
and thus (due to
the corrector bounds from \cite[Theorem 2]{GNO5} and the small-scale regularity from (A4) applied to $x_i+\phi_i$) we have that
\begin{align}
\label{BoundDifferencePhiPhiL}
|\nabla \phi_i(x_0)-\nabla \phi_i^L(x_0)| \leq \frac{\mathcal{C}(a,x_0)}{\dist(x_0,\partial [0,L]^d)+1} 
\end{align}
and as a consequence
\begin{align}
\label{BoundDifferencePhiPhiL2}
|\phi_i(x_0)-\phi_i^L(x_0)| \leq \mathcal{C}(a,x_0) \log L.
\end{align}
Plugging in the definition of $w_{i,m}^L$ from \eqref{2scale_RVE} into \eqref{BoundW} and using \eqref{bargamma_bound}, \eqref{BoundDifferencePhiPhiL}, \eqref{BoundDifferencePhiPhiL2}, and the bound on $\phi_i$ to estimate the terms $\fint_{[\kappa L,(1-\kappa)L]^d} \phi_j^L \nabla \partial_j {\bar \gamma}_{i,m}^L \,dx$ and  $\fint_{[\kappa L,(1-\kappa)L]^d} a \phi_j^L \nabla \partial_j {\bar \gamma}_{i,m}^L \,dx$, we thus obtain
\begin{align}
\nonumber
&
\bigg|
\fint_{[\kappa L,(1-\kappa)L]^d} \nabla \gamma_{i,m}^L \dd x
-\fint_{[\kappa L,(1-\kappa)L]^d} (e_j+\nabla \phi_j) \partial_j {\bar \gamma}_{i,m}^L \dd x
\bigg|
\\&
\label{EquationGamma}
+\bigg|
\fint_{[\kappa L,(1-\kappa)L]^d} a \nabla \gamma_{i,m}^L \dd x
-\fint_{[\kappa L,(1-\kappa)L]^d} a(e_j+\nabla \phi_j) \partial_j {\bar \gamma}_{i,m}^L \dd x
\bigg|
\\&
\nonumber
\leq \mathcal{C}(a) L^{-2+\delta}.
\end{align}

\smallskip 

\noindent {\bf Step 4: Conclusion.} \qquad 
Using the splitting \eqref{SumRep}, the decay estimate for the boundary layer corrector gradient \eqref{main_goal}, the bounds \eqref{EstimateTau} and \eqref{EquationGamma}, and the classical fluctuation estimate $|\fint_{[\kappa L,(1-\kappa)L]^d} g \nabla \phi_i \dx|\leq \mathcal{C}(a) ||g||_{L^\infty} L^{-d/2}$ for any deterministic $g$ (see, \eg, \cite{GNO5}) in conjunction with \eqref{bargamma_bound_1}, we deduce
\begin{subequations}
\label{EquationsForALwithR}
\begin{align}
\fint_{[\kappa L,(1-\kappa)L]^d} e_i+\nabla \phi_i^L \dx
= \fint_{[\kappa L,(1-\kappa)L]^d} \left( e_i+\sum_{m=1}^{2d}\nabla \bar \gamma_{i,m}^L \right) \dx
+\mathcal{R}_1
\end{align}
and
\begin{align}
\notag
&\fint_{[\kappa L,(1-\kappa)L]^d} a(e_i+\nabla \phi_i^L) \dx
\\&
= \fint_{[\kappa L,(1-\kappa)L]^d} a(e_i+\nabla \phi_i) \dx
+ \fint_{[\kappa L,(1-\kappa)L]^d} a(e_j+\nabla \phi_j)\sum_{m=1}^{2d}\partial_j \bar\gamma_{i,m}^L \dx
+ \mathcal{R}_2
\notag
\\&
= \bar a \fint_{[\kappa L,(1-\kappa)L]^d} e_i+\sum_{m=1}^{2d}\nabla \bar \gamma_{i,m}^L \dx
+ \mathcal{R}_3
\end{align}
\end{subequations}
with random variables $\mathcal{R}_k$ estimated as $|\mathcal{R}_1|+|\mathcal{R}_2|+|\mathcal{R}_3|\leq \mathcal{C} (L^{-d/2+\delta}+L^{-2+\delta})$ for some random variable $\mathcal{C}$ with uniformly bounded stretched exponential moments. Here in the last step we have used \eqref{bargamma_bound_1} in conjunction with the (classical) result that for any deterministic $g$ with $||g||_{L^\infty}\leq 1$ we have
\begin{align*}
\fint_{[\kappa L,(1-\kappa)L]^d} a(e_j+\nabla \phi_j) g_j \dx = \bar a \fint_{[\kappa L,(1-\kappa)L]^d} g \dx + \mathcal{R}
\end{align*}
with $|\mathcal{R}|\leq \mathcal{C} L^{-d/2}$; note that this is an immediate consequence of $\mathbb{E}[a(e_j+\nabla \phi_j) g_j]=\bar a g$ and the fluctuation bounds for the functional $\fint_{[\kappa L,(1-\kappa)L]^d} a(e_j+\nabla \phi_j) g_j \dx$ established, \eg, in \cite{GNO5}.

Note that if all the $\mathcal{R}_j$ were zero, the equations \eqref{EquationsForALwithR} and the definition \eqref{DefBarA} together would imply $\bar a^L = \bar a$.
In general, we see that the quantity $\bar a^L$ is obtained by inverting a $d^2$-dimensional system of linear equations, and in fact this system is (with overwhelming probability) a perturbation of order $L^{-\min\{d/2,2\}+\delta}$ of well-conditioned deterministic system with solution $\bar a$ (note that the deterministic system is well-conditioned since $\fint_{[\kappa L,(1-\kappa)L]^d} |\nabla \bar \gamma_{i,m}^L| \dx \lesssim L^{-1}$). The perturbation series for linear systems of equations now yields the desired assertion.
\end{proof}

\section{Argument for Proposition \ref{sens_Prop1}: Fluctuation estimates for the boundary layer corrector}

\label{sens_analysis}

In our proof of Proposition \ref{sens_Prop1} we use that the spectral gap inequality in assumption (A3) entails a corresponding estimate on arbitrary polynomial moments.
\begin{lemma} 
\label{pmoments}Let $\langle \cdot \rangle$ be an ensemble satisfying assumption (A3). Then for any random variable $F$ and $q\geq 1$ the estimate 
\begin{align}
\label{pLSI}
\en{ (F - \en{F})^{2q}}^{\frac{1}{2q}} \leq C q^\frac{1}{2} \varepsilon^{\frac{d}{2}}  \en{ \bigg( \int_{\Rd^d}  \bigg(\fint_{B_{\varepsilon}(x)} \Big| \frac{\partial F}{\partial a}\Big| \, \dd y \bigg)^{2}  \dx \bigg)^q  }^{\frac{1}{2q}}
\end{align}
holds. 
\end{lemma}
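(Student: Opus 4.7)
My plan is to upgrade the $L^2$ spectral gap in (A3) to the desired $L^{2q}$-moment bound by applying (A3) to the signed power $\xi := \bar F\,|\bar F|^{q-1}$ of the centered variable $\bar F := F - \en{F}$, and then iterating via Hölder and Young inequalities. The chain rule gives $\partial \xi/\partial a = q\,|\bar F|^{q-1}\,\partial F/\partial a$, and since $|\bar F|^{q-1}$ is a random number that does not depend on the integration variable $y$ inside the $B_\varepsilon(x)$-average appearing in (A3), it factors out of the $\fint$. The spectral gap therefore yields
\[
\en{|\bar F|^{2q}} - \en{\bar F\,|\bar F|^{q-1}}^2 \leq q^2\varepsilon^d\,\en{|\bar F|^{2(q-1)}\,G},
\]
where $G := \int_{\Rd^d}\bigl(\fint_{B_\varepsilon(x)}|\partial F/\partial a|\,\dd y\bigr)^2 \dd x$.

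Next I would apply Hölder's inequality with exponents $q/(q-1)$ and $q$ to decouple $\en{|\bar F|^{2(q-1)}G} \leq \en{|\bar F|^{2q}}^{(q-1)/q}\en{G^q}^{1/q}$. Substituting this back produces a self-referential estimate for $\en{|\bar F|^{2q}}$; a Young inequality applied to the product $\en{|\bar F|^{2q}}^{(q-1)/q}\en{G^q}^{1/q}$ either absorbs the self-referential contribution into the left-hand side or, along the complementary branch, directly forces a bound on $\en{|\bar F|^{2q}}^{1/(2q)}$ in terms of $\varepsilon^{d/2}\en{G^q}^{1/(2q)}$. The remaining mixed term $\en{\bar F\,|\bar F|^{q-1}}^2 \leq \en{|\bar F|^q}^2$ I would handle by the inductive hypothesis at the dyadically smaller exponent $q/2$, combined with the Jensen-type bound $\en{G^{q/2}}^2 \leq \en{G^q}$ to collapse the right-hand side back into the claimed form.

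The main obstacle will be extracting the sharp $q^{1/2}$ prefactor in front of $\varepsilon^{d/2}\en{G^q}^{1/(2q)}$: a naive accounting of the Young step produces a polynomial constant of order $q$ rather than $q^{1/2}$. The improvement by the factor $\sqrt q$ is the delicate point, and it is classical in the sensitivity-analysis literature --- it reflects the promotion of the $L^2$ spectral gap to a Beckner-type $L^p$ functional inequality within the Malliavin calculus framework. Obtaining it requires running the induction along dyadic values $q = 2^k$ so that the contributions at successive levels telescope, together with a careful choice of Young parameter at each step. This passage is by now standard, and in the interest of brevity I would follow closely the argument given in \cite{BellaFehrmanFischerOtto,DuerinckxGloriaSpectralGap,GNO5}.
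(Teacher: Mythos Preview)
Your sketch is correct and is exactly the standard argument; the paper itself does not supply a proof but simply cites \cite[Prop.~3.1]{DG_2019}, whose proof proceeds precisely along the lines you describe (apply the $L^2$ spectral gap to $\bar F|\bar F|^{q-1}$, use the chain rule and H\"older, then iterate). Your identification of the sharp $q^{1/2}$ prefactor as the only delicate point, together with your deferral to the literature for it, matches the paper's own treatment.
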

\noindent This standard lemma has been shown, \eg,  in \cite[Prop. 3.1]{DG_2019}. Applying \eqref{pLSI} to the random variable $F^{\eps}$ defined in \eqref{defn_F} and deriving an expression for the ``vertical derivatives'' on the right-hand side, we find that \eqref{sens_eqP1} reduces to estimating certain auxiliary functions that are used to describe $\frac{\partial F^{\eps}}{ \partial a^{\eps}}$.

We organize the argument into three parts: First, in Section \ref{identify_deriv}, we derive an expression for $\frac{\partial F^{\eps}}{ \partial a^{\eps}}$ in terms of auxiliary functions $h^{\eps}$, $v^{\eps}_1$, and $v^{\eps}_2$ --each solving an elliptic equation with coefficient field $a^\transpose\big(\tfrac{\cdot}{\eps}\big)$ on either a bounded $C^{1,1}$-domain or $\Hd$ with homogeneous Dirichlet boundary data (and, in the case of $\Hd$, a decay condition) or on $\Rd^d$ with a decay condition. Second, using a weighted Meyers estimate which we have previously proven in \cite[Proposition~6]{BFJR_1}, we estimate the auxiliary functions. Lastly, in Section \ref{proof_prop_5}, we finish our argument by combining these estimates with Lemma \ref{pmoments} and the expression for $\frac{\partial F^{\eps}}{ \partial a^{\eps}}$ that has been derived in Section \ref{identify_deriv}. 


\subsection{Identification of the Malliavin / Fr\'echet derivative}
\label{identify_deriv}

In this section we calculate $\frac{\partial F^{\eps}}{ \partial a^{\eps}}$ in terms of auxiliary functions $h^{\eps}$, $v^{\eps}_1$, and $v^{\eps}_2$.

\begin{lemma} We adopt the assumptions of Proposition \ref{sens_Prop1}. Let $h^{\eps} \in \dot H^1_{0}(\domain)$ solve 
\begin{equation}
\label{Definitionh}
\begin{aligned}
-\nabla \cdot (a^\transpose\big(\tfrac{\cdot}{\eps}\big)\nabla h^{\eps}) &= \nabla \cdot \eps g && \text{in }\domain,
\\
h^{\eps}& =  0&&\text{on }\partial\domain,
\end{aligned}
\end{equation} 
and $\hat h^{\eps} \in \dot H^1(\Rd^d)$ be the decaying solution of 
\begin{align}
\label{Definitionhath}
-\nabla \cdot (a^\transpose\big(\tfrac{\cdot}{\eps}\big)\nabla \hat h^{\eps}) = \nabla \cdot \eps g\quad\text{on }\mathbb{R}^d.
\end{align}
We, furthermore, define $v^{\eps}_1$ and $v^{\eps}_2 \in \dot H^1(\Rd^d)$ as the decaying solutions of
\begin{align}\label{sens_def_v1}
 - \nabla \cdot (a^\transpose\big(\tfrac{\cdot}{\eps}\big) \nabla v^{\eps}_1) &= \frac{1}{\eps}
\nabla f \cdot a^\transpose\big(\tfrac{\cdot}{\eps}\big) (\chi_\domain \nabla h^{\eps}- \nabla \hat h^{\eps}) \quad  \textrm{in } \mathbb{R}^d,
\end{align}
and 
\begin{align}
\label{sens_def_v2}
 - \nabla \cdot ( a^\transpose\big(\tfrac{\cdot}{\eps}\big) \nabla v^{\eps}_2 ) &= - \frac{1}{\eps} \nabla \cdot (f a^\transpose\big(\tfrac{\cdot}{\eps}\big) (\chi_\domain \nabla h^{\eps} - \nabla \hat h^{\eps}) ) \quad \textrm{in } \mathbb{R}^d,
\end{align}
respectively. Then, for $F^{\eps}$ defined in \eqref{defn_F}, the vertical derivative $\frac{\partial F^{\eps}}{\partial a^{\eps}}$ is given by 
\begin{align}
\label{MalliavinDerivative}
\frac{\partial F^{\eps}}{\partial a^{\eps}} = \chi_\domain \nabla u_{\eps} \otimes \nabla h^{\eps} - (e_i+\nabla\phi^{\eps}_i) \otimes (\nabla v^{\eps}_1 + \nabla v^{\eps}_2). 
\end{align}
\end{lemma}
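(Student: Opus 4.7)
The plan is to linearize $F = \int_\domain g \cdot \nabla u$ along a perturbation $a \mapsto a + s\,\delta a$ and to reorganize the first variation by successively invoking the four dual test fields $h$, $\hat h$, $v_1$, $v_2$. Differentiating the PDE and boundary condition in \eqref{sens_defu}, the linearized field $u' := \partial_s u|_{s=0}$ satisfies $-\nabla\cdot(a\nabla u') = \nabla\cdot(\delta a\,\nabla u)$ in $\domain$ with $u' = f\,\delta\phi_i$ on $\partial\domain$, where $\delta\phi_i$ is the first variation of the whole-space corrector and solves $-\nabla\cdot(a\nabla\delta\phi_i) = \nabla\cdot(\delta a(e_i+\nabla\phi_i))$ on $\Rd^d$. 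By definition $F' = \int_\domain g\cdot \nabla u'$.

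Two successive integrations by parts then split $F'$ into a bulk term and a boundary term. Since $\supp g \subset B_r(x_0)$ with $\dist(B_r(x_0),\partial\domain)\geq r/4$, the trace of $g$ on $\partial\domain$ vanishes, so $\int_\domain g\cdot\nabla u' = -\int_\domain u'\,\nabla\cdot g = \int_\domain u'\,\nabla\cdot(a^\transpose\nabla h)$ via \eqref{Definitionh}. Integrating by parts again, using $h=0$ on $\partial\domain$ and testing the equation for $u'$ with $h$, one arrives at
\begin{align*}
F' \;=\; \int_\domain \delta a\,\nabla u\cdot \nabla h\,dx \;+\; \int_{\partial\domain} f\,\delta\phi_i\,(a^\transpose\nabla h)\cdot n\,dS.
\end{align*}
The first summand already yields the $\chi_\domain\nabla u\otimes\nabla h$ contribution of \eqref{MalliavinDerivative}; the task reduces to identifying the boundary integral as the second summand.

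The key device for the boundary integral is that the vector field $W := a^\transpose(\chi_\domain\nabla h - \nabla\hat h)$, viewed on $\Rd^d$, satisfies $-\nabla\cdot W = \big((a^\transpose\nabla h)\cdot n\big)\delta_{\partial\domain}$ in the distributional sense: the bulk sources of \eqref{Definitionh} and \eqref{Definitionhath} cancel (both are driven by $-\nabla\cdot g$, which lives in $\domain$), leaving only the jump of the conormal flux across $\partial\domain$. Testing this identity against $f\,\delta\phi_i$ gives
\begin{align*}
\int_{\partial\domain} f\,\delta\phi_i\,(a^\transpose\nabla h)\cdot n\,dS \;=\; \int_{\Rd^d} W\cdot\nabla(f\,\delta\phi_i)\,dx,
\end{align*}
and splitting $\nabla(f\delta\phi_i)=(\nabla f)\delta\phi_i + f\nabla\delta\phi_i$, the auxiliary PDEs \eqref{sens_def_v1} and \eqref{sens_def_v2} are tailored exactly so that testing them against $\delta\phi_i$ absorbs these two pieces, producing $\int_{\Rd^d} a\,\nabla\delta\phi_i\cdot\nabla(v_1+v_2)\,dx$. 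A final integration by parts using the equation for $\delta\phi_i$ converts this into $-\int_{\Rd^d}\delta a\,(e_i+\nabla\phi_i)\cdot\nabla(v_1+v_2)\,dx$, which combines with the bulk term to yield \eqref{MalliavinDerivative}.

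The main technical obstacle is to justify each integration by parts rigorously on the unbounded sets $\Hd$ and $\Rd^d$: $\delta\phi_i$ grows only sublinearly, and in the half-space case $\domain=\Hd$ one has $f\equiv 1$, so $f\delta\phi_i$ does not decay at infinity. The cleanest route is to first prove the identity under the extra assumption $\delta a \in C_c^\infty(\Rd^d;\Rd^{d\times d})$---which ensures that $\delta\phi_i$ has a genuine Fr\'echet derivative with Green-function decay---and to use the boundary-layer decay of $\chi_\domain\nabla h-\nabla\hat h$ and of $\nabla v_2$ (cf.\ the weighted estimates of Lemma~\ref{sens_Lm0}) to kill the boundary-at-infinity contributions. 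Note that in the half-space case $\nabla f\equiv 0$ forces $v_1\equiv 0$, so only the $v_2$ contribution appears. Density of such $\delta a$ in the admissible class of perturbations then extends the formula to the general setting used in the spectral-gap inequality.
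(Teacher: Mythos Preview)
Your proposal is correct and follows essentially the same route as the paper's proof: linearize to obtain the equations for $\delta u$ and $\delta\phi_i$, use $h$ to split $\delta F$ into the bulk term $\int_\domain \delta a\,\nabla u\cdot\nabla h$ and the boundary integral $\int_{\partial\domain} f\,\delta\phi_i\,(a^\transpose\nabla h)\cdot n$, and then rewrite the latter via $\hat h$, $v_1$, $v_2$. The only cosmetic difference is that you package the passage from the boundary integral to $\int_{\Rd^d} W\cdot\nabla(f\,\delta\phi_i)$ as the distributional identity $-\nabla\cdot W = \big((a^\transpose\nabla h)\cdot n\big)\delta_{\partial\domain}$, whereas the paper obtains the same identity by explicitly writing $\int_{\partial\domain} f\,\delta\phi_i\,n\cdot a^\transpose\nabla h = \int_\domain \nabla(f\,\delta\phi_i)\cdot(a^\transpose\nabla h+g)$ and then using \eqref{Definitionhath} to replace $g$ by $-a^\transpose\nabla\hat h$ on all of $\Rd^d$; these are the same computation.
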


\begin{proof} We differentiate $F^{\eps}$ with respect to $a^{\eps}$ via a formal method of C\'ea involving an augmented Lagrangian approach \cite{Cea_1986}.
	The latter expresses the equations for $u_{\eps}$ and $\phi^{\eps}_i$ as Lagrangian constraints, which naturally give rise to the adjoint states $\hat{h}^{\eps}$ and $v^{\eps}_1 + v^{\eps}_2$.
	
	We set $\tilde{u}^{\eps}:=u_{\eps}- f\phi_i\big(\tfrac{\cdot}{\eps}\big) \in \dot H^1_{0}(\domain)$ and define an augmented Lagrangian
	\begin{align}\label{Def:Lag}
		\mathcal{L}(a^{\eps}, \phi^{\eps}_i, \tilde{u}^{\eps}, v^{\eps}, h^{\eps}) :=& \int_{\mathcal{O}} g \cdot \eps\nabla( \tilde{u}^{\eps} + f \phi_i\big(\tfrac{\cdot}{\eps}\big))  \dx + \int_{\mathcal{O}} \nabla h^{\eps} \cdot a^{\eps} \nabla (\tilde{u}^{\eps} + f \phi^{\eps}_i)  \dx
		\\&\nonumber
		+ \int_{\mathbb{R}^d} \nabla v^{\eps} \cdot a^{\eps} (\nabla \phi^{\eps}_i + e_i) \dx,
	\end{align}
	where the two last integrals are nothing but the variational form associated with \eqref{sens_defu} and the corrector equation on $\phi^{\eps}_i$, where $v^{\eps} \in \dot H^1 (\mathbb{R}^d)$ is sufficiently decaying at infinity and where $h^{\eps} \in \dot H^1_0(\mathcal{O})$.
	For such $h^{\eps}$ and $v^{\eps}$, it is obvious from the definition of $\phi^{\eps}_i$ and $\tilde{u}^{\eps}$ that $\mathcal{L}(a^{\eps},\phi^{\eps}_i,\tilde{u}^{\eps}, v^{\eps}, h^{\eps}) = F^{\eps}$.
	
	We now choose specific adjoint functions $h^{\eps}$ and $v^{\eps}$ such that $\partial_{\tilde{u}^{\eps}} \mathcal{L} = 0$ and $\partial_{\phi^{\eps}_i} \mathcal{L} = 0$.
	The first equation reads
	\begin{align*}
		\langle \partial_{\tilde{u}^{\eps}} \mathcal{L}(a^{\eps}, \phi^{\eps}_i, \tilde{u}^{\eps}, v^{\eps}, h^{\eps}), \delta \tilde{u}^{\eps}\rangle = 
		\int_{\mathcal{O}} g \cdot \eps \nabla \delta \tilde{u}^{\eps} \dx
		+
		\int_{\mathcal{O}} \nabla h^{\eps} \cdot a^{\eps} \nabla \delta \tilde{u}^{\eps} \dx = 0,
	\end{align*}
	in which we recognize the variational form associated with \eqref{Definitionh}. Notice that here $\delta \tilde{u}^{\eps}$ denotes the direction of the derivative with respect to $\tilde{u}^{\eps}$.
	The second equation reads
	\begin{align*}
		\langle \partial_{\phi^{\eps}_i} \mathcal{L}(a^{\eps}, \phi^{\eps}_i, \tilde{u}^{\eps}, v^{\eps}, h^{\eps}), \delta \phi^{\eps} \rangle
		=& \int_{\R^d} g \cdot \nabla( f \delta \phi^{\eps}) \dx + 
		\int_{\R^d} \chi_{\OO} \nabla h^{\eps} \cdot a^{\eps} \nabla (f \delta \phi^{\eps}) \dx
		\\&
		+ \int_{\mathbb{R}^d} \nabla v^{\eps} \cdot a^{\eps} \nabla \delta \phi^{\eps} \dx \\&=0,
	\end{align*}
	where we recall that $g$ is compactly supported in $\OO$ and again $\delta \phi^{\eps}$ is the direction of the derivative.
		Defining $\hat{h}^{\eps}$ by \eqref{Definitionhath}, the above equation yields
	\begin{align*}
		-\frac{1}{\eps}\int_{\R^d} \nabla \hat{h}^{\eps}  \cdot a^{\eps} \nabla (f \delta \phi^{\eps}) \dx + 
		\int_{\R^d} \chi_{\OO} \nabla h^{\eps} \cdot a^{\eps} \nabla (f \delta \phi^{\eps}) \dx
		+ \int_{\mathbb{R}^d} \nabla v^{\eps} \cdot a^{\eps} \nabla \delta \phi^{\eps} \dx=0,
	\end{align*}
	which is a defining equation for $v^{\eps}$ while $\delta \phi^{\eps}$ varies.
	Expanding the term $\nabla (f \delta \phi^{\eps})$ and performing an integration by parts, we recognize that $v^{\eps}=-v^{\eps}_1-v^{\eps}_2$ for $v^{\eps}_1$ and $v^{\eps}_2$ defined by \eqref{sens_def_v1} and \eqref{sens_def_v2} respectively.
	
	For such a choice of $h^{\eps}$ and $v^{\eps}$, since all partial derivatives of $\mathcal{L}$ vanish but the very first one, we immediately obtain \eqref{MalliavinDerivative} from \eqref{Def:Lag}.
\end{proof}

\subsection{Estimates on the auxiliary functions}
\label{aux_estimates}

Having calculated the derivative of $F^{\eps}$ with respect to changes in the coefficient field $a^{\eps}$ in the previous step, we now seek to control the terms on the right-hand side of \eqref{MalliavinDerivative} by obtaining suitable estimates for $v^{\eps}_1, v^{\eps}_2$, and $h^{\eps}$. 

For this, the main tool is a weighted Meyers estimate, the proof of which relies on a large-scale regularity result for $a$-harmonic functions on $\domain$ with homogeneous Dirichlet boundary data. In particular, it relies on:

\begin{proposition}[{Theorem~2 of \cite{FischerRaithel} \& Theorem~3 of \cite{BFJR_1}}]
\label{large_scale_reg_domain} We adopt the assumptions of Proposition~\ref{sens_Prop1}. For $R>0$ and $x_0 \in \overline{\domain}$, let $u \in H^1_{\loc}(\domain)$ solve 
\begin{equation}
\label{u_with_massive_term}
\begin{aligned}
-\nabla \cdot (a^{\eps}\nabla u) &= 0 && \text{in } \quad  B_R(x_0) \cap \domain,\\
u & = 0 && \text{on } \quad  B_R(x_0) \cap \partial \domain.
\end{aligned}
\end{equation}
If $\domain$ is a bounded $C^{1,1}$ domain, assume additionally that $R\leq c(\domain)$.
Then there exists a random field $r_{\domain}^*(a,x) \geq \varepsilon$ such that $ \frac{r_{\domain}^*}{\varepsilon}$ has stretched exponential moments (in the sense of \eqref{stretched}) and the relation
\begin{align}
\label{large_scale_mvp_domain}
\fint_{\domain \cap B_r(x_0)} |\nabla u|^2 \dx  \lesssim_{d, \lambda, \domain} \fint_{\domain \cap B_R(x_0)} |\nabla u|^2 \dx 
\end{align}
holds for any $C(\domain) \geq R \geq r \geq r_{\domain}^*(a^{\eps},x_0)$.

In the case that $\domain = \Hd$, the minimal radius $r_{\Hd}^*(a^{\eps}, x)$ is stationary with respect to shifts tangential to $\partial \Hd$.
\end{proposition}

\noindent As shown in \cite[Proposition~6]{BFJR_1}, this implies the following large-scale Meyers estimate: 

\begin{lemma}[{\cite[Proposition~6]{BFJR_1}}]
We adopt the assumptions of Proposition \ref{sens_Prop1}. Furthermore, let $x_0 \in \domain$ and $R \geq \max\{r_{\domain}^*(a^{\eps},x_0),r_{\domain}^*(a^\transpose\big(\tfrac{\cdot}{\eps}\big),x_0)\} \geq \eps$. For $f \in L^2(\domain)$ and $g \in L^2(\domain,\mathbb{R}^d)$, let $v$ denote the unique solution of 
\begin{equation}
\label{P002}
\begin{aligned}
-\nabla \cdot (a^\transpose\big(\tfrac{\cdot}{\eps}\big) \nabla v) &= \nabla \cdot g + f \quad &&\text{in }\domain,\\
v &= 0 \quad &&\text{on }\partial\domain
\end{aligned}
\end{equation}
and introduce the weight
\begin{equation}
\label{omega}
 \omega_{\alpha,R}(x) := \bigg( \frac{|x-x_0|}{R} + 1\bigg)^\alpha.
 \end{equation}
Then, in the case that $f=0$, there exists $\bar p(\lambda, d, \domain) > 1$ such that, for $1 \le p < \bar p$ and $0 \le \alpha_0 < \alpha_1 < d(2p-1)$, the estimate
\begin{align}
\label{Lemma_4_est_1}
\begin{split}
&\bigg( \int_{\domain}  |\nabla v|^{2p} \omega_{\alpha_0,R}  \dx \bigg)^{\frac{1}{2p}}  \lesssim_{d,\lambda,p, \domain,\alpha_0, \alpha_1} 
\bigg( \int_{\domain} |g|^{2p}  \omega_{\alpha_1,R}  \dx \bigg)^{\frac{1}{2p}} 
\end{split}
\end{align}
holds. Likewise, in the case that $g = 0$, there exists $\bar p(\lambda, d, \domain) > 1$ such that, for $1 \le p < \bar p$ and $0 \le \alpha_0 < \alpha_1 -2p <\alpha_1 < d(2p-1)$, the estimate
\begin{align}
\label{Lemma_4_est_2}
\begin{split}
&\bigg( \int_{\domain} |\nabla v|^{2p}  \omega_{\alpha_0,R}  \dx \bigg)^{\frac{1}{2p}} \lesssim_{d,\lambda,p, \domain,\alpha_0, \alpha_1}
R \bigg( \int_{\domain} | f|^{2p}  \omega_{\alpha_1,R}  \dx \bigg)^{\frac{1}{2p}}
\end{split}
\end{align}
holds. We emphasize that when $\domain = \Hd$, the boundary condition in \eqref{P002} is augmented by a decay condition in the far-field of the infinite part of the domain.
\end{lemma}

\noindent We remark that the whole-space version (\ie $\domain = \Rd^d$) of this large-scale weighted Meyers estimate has already been shown, \eg, in \cite[Lemma 7]{BellaFehrmanFischerOtto}.

An immediate consequence of the above estimate is the following corollary:
\begin{corollary}\label{sens_Cor1}
Adopt the assumptions of Proposition \ref{sens_Prop1}. Then, for $h^{\eps}$ and $\hat{h}^{\eps}$ solving \eqref{Definitionh} and \eqref{Definitionhath} respectively, there exists $\bar p (d,\lambda)> 1$ such that, for $1 \le p < \bar p$ and $0 \le \alpha < d(2p-1)$, the estimates
\begin{align}
\label{Meyers1} 
\bigg( \int_{\domain}  |\nabla h^{\eps}|^{2p} \omega_{\alpha,r+r^{\domain}_*(x_0)} \dx   \bigg)^{\frac{1}{2p}} &\lesssim_{d,\lambda,p,\domain, \alpha} \bigg( \int_{B_r(x_0)} |\eps g|^{2p} \dx  \bigg)^{\frac{1}{2p}},
\\
\label{Meyers2} 
\bigg( \int_{\mathbb{R}^d} |\nabla \hat h^{\eps}|^{2p} \omega_{\alpha,r+r^{\domain}_*(x_0)} \dx  \bigg)^{\frac{1}{2p}} &\lesssim_{d,\lambda,p,\domain, \alpha}  \bigg( \int_{B_r(x_0)} |\eps g|^{2p} \dx \bigg)^{\frac{1}{2p}}
\end{align}
hold.
\end{corollary}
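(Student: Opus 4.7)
The plan is a direct application of the weighted Meyers estimate \eqref{Lemma_4_est_1} of Lemma~\ref{sens_Lm0}, once for $h$ on $\domain$ and once for $\hat h$ on $\mathbb{R}^d$. In both cases I would set $F_0 := g$ and $f_0 := 0$ and choose the reference radius in the weight to be
\[
R := r + r^{\domain}_*(x_0),
\]
so that the standing hypothesis $R \ge r^{\domain}_*(x_0)$ of Lemma~\ref{sens_Lm0} is automatic. Picking any $\alpha_1$ with $\alpha < \alpha_1 < d(2p-1)$ and applying \eqref{Lemma_4_est_1} with $\alpha_0 = \alpha$ then yields
\[
\bigg(\int_{\domain}|\nabla h|^{2p}\omega_{\alpha,R}\dx\bigg)^{\frac{1}{2p}}
\lesssim \bigg(\int_{\domain}|g|^{2p}\omega_{\alpha_1,R}\dx\bigg)^{\frac{1}{2p}},
\]
and analogously for $\nabla \hat h$ on $\mathbb{R}^d$.

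To arrive at the unweighted right-hand side claimed in \eqref{Meyers1}--\eqref{Meyers2}, I would exploit that $g$ is supported in $B_r(x_0)$ and that $R \ge r$. Indeed, for $x \in B_r(x_0)$ we have
\[
\omega_{\alpha_1,R}(x) = \bigg(\frac{|x-x_0|}{R}+1\bigg)^{\alpha_1} \le 2^{\alpha_1},
\]
so that this weight may be absorbed into the constant $C(d,\lambda,p,\domain,\alpha)$. The resulting right-hand side is precisely $\big(\int_{B_r(x_0)}|g|^{2p}\dx\big)^{1/(2p)}$, yielding both \eqref{Meyers1} and \eqref{Meyers2}.

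The only bookkeeping subtlety is that both estimates are stated with the same weight $\omega_{\alpha,r+r^{\domain}_*(x_0)}$, even though $\hat h$ solves a whole-space equation for which Lemma~\ref{sens_Lm0} naturally requires $R \ge r^{\mathbb{R}^d}_*(x_0)$ rather than $R \ge r^{\domain}_*(x_0)$. I would resolve this by invoking the monotonicity $r^{\mathbb{R}^d}_*(x_0) \le r^{\domain}_*(x_0)$ (the whole-space regularity scale is no larger than its boundary-aware counterpart; in practice $r^{\domain}_*$ is defined to dominate both), so that $R \ge r^{\mathbb{R}^d}_*(x_0)$ is automatic and Lemma~\ref{sens_Lm0} applies on $\mathbb{R}^d$ as well. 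No genuine obstacle arises; the corollary is an immediate reformulation of Lemma~\ref{sens_Lm0} tailored to the specific source term $\nabla \cdot g$ with compactly supported $g$.
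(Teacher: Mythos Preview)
Your proposal is correct and matches the paper's own justification: the authors explicitly state that the corollary follows by applying Lemma~\ref{sens_Lm0} to $h$ and $\hat h$ and using that $g$ is supported in $B_r(x_0)$ together with the definition of the weight \eqref{omega}. Your observation about the weight being bounded by $2^{\alpha_1}$ on $\supp g$ and your handling of the $r^{\mathbb{R}^d}_*(x_0)\le r^{\domain}_*(x_0)$ bookkeeping are exactly the minor points one fills in, and the paper leaves them implicit.
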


\noindent Since this corollary follows easily from the large-scale weighted Meyers estimate, we forego a formal proof --notice that to obtain the result one simply applies \eqref{Lemma_4_est_1} to $h^{\eps}$ and $\hat{h}^{\eps}$ and uses the definition of the weight \eqref{omega}, along with the property that $g$ is supported in $B_r(x_0)$.

To obtain satisfactory control of $\nabla v^{\eps}_1$ and $\nabla v^{\eps}_2$, we require a better estimate for $\nabla(h^{\eps} - \hat h^{\eps})$ than that following from the triangle inequality and Corollary~\ref{sens_Cor1}. Towards this end, we prove the following:

\begin{lemma}\label{sens_Lm2}
Adopt the assumptions of Proposition \ref{sens_Prop1}, and let $R := \dist(x,\partial\domain) + r + r^{\domain}_*(x_0)$. Let $h^{\eps}$ and $\hat{h}^{\eps}$ solve \eqref{Definitionh} and \eqref{Definitionhath} respectively. Then there exists $\bar{p}>1$ such that, for all $1 \leq p < \bar{p}$ and $0 \leq \alpha_0 < \alpha_1 < d(2p-1)$ additionally satisfying $\alpha_1 >2p$, the estimate
\begin{align}
\label{Meyers3}
\begin{split}
& \bigg( \int_{\domain}  |\nabla (h^{\eps}-\hat h^{\eps})|^{2p}  \omega_{\alpha_0,R} 
\dx  \bigg)^{\frac{1}{2p}}
\\&
\lesssim_{d,\lambda,p,\domain, \alpha_0, \alpha_1}
\bigg( \frac{\dist(x_0,\partial\domain)}{r + r^{\domain}_*(x_0)} + 1 \bigg)^{-\frac{\alpha_1}{2p}} \bigg( \int_{B_r(x_0)} |\eps g|^{2p} \dx \bigg)^{\frac{1}{2p}}
\end{split}
\end{align} 
holds.
\end{lemma}

 We make use of Lemma \ref{sens_Lm2}, which is proven at the end of this section, in the form of the next corollary. In particular, combining the estimate~\eqref{Meyers3} on $ \nabla (h^{\eps} - \hat h^{\eps})$ inside $\domain$ with \eqref{Meyers2} for $\nabla \hat h^{\eps}$ outside of $\domain$, we obtain:

\begin{corollary}\label{sens_Cor2}
Adopt the assumptions of Proposition \ref{sens_Prop1}, let $R := \dist(x,\partial\domain) + r + r^{\domain}_*(x_0)$, and $v_1^{\eps}$ and $v_2^{\eps}$ solve \eqref{sens_def_v1} and \eqref{sens_def_v2} respectively. Then, for  $1 \leq p < \bar{p}$ and $0 \le \alpha_0 < \alpha_1 -2p <\alpha_1 < d(2p-1)$, the estimate 
 \begin{align}
 \label{sens_v1v2est}
 &\bigg( \int_{\Rd^d}|\nabla v^{\eps}_i|^{2p} \omega_{\alpha_0,R}
 \dx \bigg)^{\frac{1}{2p}}
 \\ &\nonumber
 \lesssim_ {d,\lambda,p, \domain,\alpha_0, \alpha_1, f} \bigg( \frac{\dist(x_0,\partial\domain)}{r + r^{\domain}_*(x_0)} + 1 \bigg)^{\frac{- \alpha_1}{2p}} \bigg( \int_{B_r(x_{0})} |g|^{2p} \dx \bigg)^{\frac{1}{2p}}
\end{align}
holds for $i=1,2$. In fact, in the case that $i=2$, \eqref{sens_v1v2est} holds for $0 \le \alpha_0 <\alpha_1 < d(2p-1)$.
\end{corollary}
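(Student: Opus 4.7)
My plan is to apply the weighted Meyers estimate Lemma~\ref{sens_Lm0} to the equations defining $v_1$ and $v_2$, thereby reducing the left-hand side of \eqref{sens_v1v2est} to a weighted $L^{2p}$ integral of $|\chi_\domain\nabla h-\nabla\hat h|^{2p}$. That integral is then split into the contribution from $\domain$ (controlled by Lemma~\ref{sens_Lm2}) and the contribution from $\mathbb{R}^d\setminus\domain$ (controlled by \eqref{Meyers2} of Corollary~\ref{sens_Cor1} together with a one-line weight comparison).

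Concretely, I would fix an exponent $\alpha_1^\star$ slightly larger than $\max(\alpha_1,2p)$ and still strictly below $d(2p-1)$; such a choice is possible because the corollary's hypothesis ensures $\alpha_1<d(2p-1)$ and because $d(2p-1)>2p$ for $p\ge 1$ and $d\ge 3$. For $v_2$, whose source $F_0=f\, a^\transpose(\chi_\domain\nabla h-\nabla\hat h)$ is in divergence form and satisfies $|F_0|\le|\chi_\domain\nabla h-\nabla\hat h|$, the first bound \eqref{Lemma_4_est_1} of Lemma~\ref{sens_Lm0} yields
\begin{equation*}
\Bigl(\int|\nabla v_2|^{2p}\omega_{\alpha_0,R}\dx\Bigr)^{1/(2p)}\lesssim\Bigl(\int|\chi_\domain\nabla h-\nabla\hat h|^{2p}\omega_{\alpha_1^\star,R}\dx\Bigr)^{1/(2p)}.
\end{equation*}
For $v_1$, which vanishes identically on the half-space (where $f\equiv 1$), so that this step is only nontrivial for bounded $\domain$, I would instead invoke the second bound \eqref{Lemma_4_est_2}: the strengthened hypothesis $\alpha_1>\alpha_0+2p$ in the corollary is exactly what \eqref{Lemma_4_est_2} requires, and the extraneous factor $R$ it introduces is absorbed into the constant that is allowed to depend on $\domain$ and $f$. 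Writing $D:=\dist(x_0,\partial\domain)$ and $s:=r+r^{\domain}_*(x_0)$ so that $R=D+s$, I then apply Lemma~\ref{sens_Lm2} with $\alpha_1^\star$ playing the role of its $\alpha_1$ and the corollary's $\alpha_1$ playing the role of its $\alpha_0$, to control the contribution from $\domain$ by $(D/s+1)^{-\alpha_1^\star/(2p)}\bigl(\int_{B_r(x_0)}|g|^{2p}\dx\bigr)^{1/(2p)}$.

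For the contribution from $\mathbb{R}^d\setminus\domain$ the key ingredient is the elementary weight comparison
\begin{equation*}
\frac{\omega_{\alpha_1^\star,R}(x)}{\omega_{\alpha_1^\star,s}(x)}=\Bigl(\frac{s}{R}\cdot\frac{|x-x_0|+R}{|x-x_0|+s}\Bigr)^{\alpha_1^\star}\le\Bigl(\frac{2}{D/s+1}\Bigr)^{\alpha_1^\star}\qquad\text{for }|x-x_0|\ge D,
\end{equation*}
which follows from $(|x-x_0|+R)/(|x-x_0|+s)=1+D/(|x-x_0|+s)\le 2$ combined with $s/R=1/(D/s+1)$. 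Feeding this bound into \eqref{Meyers2} of Corollary~\ref{sens_Cor1} at exponent $\alpha_1^\star$ yields the matching estimate for $\int_{\mathbb{R}^d\setminus\domain}|\nabla\hat h|^{2p}\omega_{\alpha_1^\star,R}\dx$, and since $(D/s+1)^{-\alpha_1^\star/(2p)}\le(D/s+1)^{-\alpha_1/(2p)}$ this gives \eqref{sens_v1v2est}. The main obstacle is the bookkeeping around the $v_1$ case: one has to verify that the extra $R$ in \eqref{Lemma_4_est_2} is genuinely absorbable, which hinges on the strengthened hypothesis $\alpha_1>\alpha_0+2p$ (allowing one to trade the $R$ for a larger weight decay) and on the fact that $v_1$ is only nontrivial in the bounded-domain setup, where the constant may legitimately depend on $\domain$ and $f$.
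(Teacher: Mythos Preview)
Your approach is essentially the paper's: apply Lemma~\ref{sens_Lm0} (with \eqref{Lemma_4_est_2} for $v_1$ and \eqref{Lemma_4_est_1} for $v_2$), then feed in Lemma~\ref{sens_Lm2} for the interior contribution and \eqref{Meyers2} for the exterior. Your introduction of an intermediate exponent $\alpha_1^\star$ and your explicit weight comparison on $\mathbb{R}^d\setminus\domain$ are in fact more carefully spelled out than in the paper, which just writes ``follows from \eqref{Meyers3}, \eqref{Meyers2}''.

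There is one point where your bookkeeping is off. For $v_1$ in the bounded-domain case you claim the extra factor $R$ from \eqref{Lemma_4_est_2} can be absorbed because ``the strengthened hypothesis $\alpha_1>\alpha_0+2p$ allows one to trade the $R$ for a larger weight decay''. This is not how the $R$ disappears: the gap $\alpha_1-\alpha_0>2p$ is already the hypothesis needed to \emph{apply} \eqref{Lemma_4_est_2}, it does not give you further room to kill a multiplicative $R$. Since $R=\dist(x_0,\partial\domain)+r+r^{\domain}_*(x_0)$ contains the random $r^{\domain}_*(x_0)$, it is not a priori bounded by a constant depending only on $\domain$. The paper handles this by a preliminary reduction: if $R>\diam(\domain)$ then \eqref{sens_v1v2est} follows directly from the (unweighted) energy and Meyers estimates for \eqref{sens_def_v1}--\eqref{sens_def_v2}, so one may assume $R\le\diam(\domain)$, and then $R$ is indeed a domain-dependent constant. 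With that one-line reduction added, your argument is complete and matches the paper's.
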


\begin{proof}
We may assume w.\,l.\,o.\,g.\ that $R \leq \diam(\domain)$.

We then first consider the case that $i =1$. To obtain \eqref{sens_v1v2est}, we apply the whole-space version of the weighted Meyers estimate \eqref{Lemma_4_est_2} to $v^{\eps}_1$ solving \eqref{sens_def_v1}. This yields that
\begin{align*}
&\bigg( \int_{\Rd^d} |\nabla v^{\eps}_1|^{2p}  \omega_{\alpha_0,R} \dx \bigg)^{\frac{1}{2p}} \lesssim \frac{R}{\eps}  \bigg( \int_{\Rd^d} | \nabla f \cdot  a^\transpose\big(\tfrac{\cdot}{\eps}\big) (\chi_\domain \nabla h^{\eps} - \nabla \hat h^{\eps}))|^{2p}  \omega_{\alpha_1,R} \dx \bigg)^{\frac{1}{2p}}
\end{align*}
for $1< p \leq \bar{p}$ and $0<  \alpha_0 < \alpha_1 -2p < \alpha_1 < d(2p-1)$. In the case that $\domain = \Hd$, the right-hand side vanishes since $f \equiv 1$. When alternatively $\domain$ is a bounded $C^{1,1}$ -domain, \eqref{sens_v1v2est} follows from \eqref{Meyers2}, \eqref{Meyers3}, and that $|\nabla f | \lesssim 1$. 

The estimate for $i =2$ is obtained in essentially the same way, with the application of \eqref{Lemma_4_est_1} to $v^{\eps}_2$ solving \eqref{sens_def_v2}. Since we use \eqref{Lemma_4_est_1} instead of \eqref{Lemma_4_est_2}, the result \eqref{sens_v1v2est} holds for $0 \leq \alpha_0 < \alpha_1 < d(2p-1)$.
\end{proof}

To complete this section, we now give the proof of Lemma \ref{sens_Lm2}. For this we apply the weighted Meyers estimate to $h^{\eps} - \eta \hat{h}^{\eps}$, where the cut-off $\eta$ ensures homogeneous Dirichlet boundary data on $\domain$. Here is the argument:

\begin{proof}[Proof of Lemma \ref{sens_Lm2}]
Let $L = \dist(x_0, \partial \domain)$. Since \eqref{Meyers3} follows from \eqref{Meyers1} and \eqref{Meyers2} by the triangle inequality when $L \le 2(r^{\domain}_*(x_0) + r)$, we may assume that $ L \ge 2(r^{\domain}_*(x_0) + r)$. 

The main idea of our argument is to split $h^{\eps}-\hat h^{\eps}$ into a function which vanishes on $\partial\domain$ and a remainder, and show that~\eqref{Meyers3} holds for both of these functions. To this end, let $\hat \eta : [0,\infty) \to [0,1]$ be a smooth function satisfying $\hat \eta(t) = 1$ for $t \in [0,\frac{1}{2}]$, $\hat \eta(t) = 0$ for $t \ge \frac{5}{8}$, and $|\hat \eta'| \le 9$, and use it to define a cutoff function $\eta(x) := \hat \eta ( \frac{ |x-x_0| }{L} )$. Since $L \ge 2(r^{\domain}_*(x_0) + r)$ and $g$ is supported in $B_r(x_0)$, we see that $\eta \equiv 1$ on the support of $g$; furthermore, $\eta = 0$ in a neighborhood of the boundary $\partial\domain$ and $\supp(\nabla \eta) \subseteq B_{\frac{3L}{4}}(x_0) \setminus B_{\frac{L}{2}}(x_0)$.

 
Defining $w^{\eps} := h^{\eps} - \eta \hat h^{\eps}$ and using the equations \eqref{Definitionh} and \eqref{Definitionhath}, we find that
\begin{align*}
  -\nabla \cdot ((a^\eps)^\transpose \nabla w^{\eps})&= -\nabla \cdot  (a^\transpose\big(\tfrac{\cdot}{\eps}\big) \nabla h^{\eps})  + \nabla \cdot  (a^\transpose\big(\tfrac{\cdot}{\eps}\big) \hat h^{\eps} \nabla \eta) +  \nabla \cdot  (a^\transpose\big(\tfrac{\cdot}{\eps}\big)  \eta \nabla \hat h^{\eps}) 
 \\
 &= \nabla \cdot (\eps g) + \nabla \cdot ( \hat h^{\eps} a^\transpose\big(\tfrac{\cdot}{\eps}\big) \nabla \eta) + \eta ( - \nabla \cdot (\eps g) ) + \nabla \eta \cdot a^\transpose\big(\tfrac{\cdot}{\eps}\big) \nabla \hat h^{\eps} 
 \\
 &= \nabla \cdot ( \hat h^{\eps} a^\transpose\big(\tfrac{\cdot}{\eps}\big) \nabla \eta) + \nabla \eta \cdot a^\transpose\big(\tfrac{\cdot}{\eps}\big) \nabla \hat h^{\eps} \\
 & =  \nabla \cdot ( \hat h^{\eps} a^\transpose\big(\tfrac{\cdot}{\eps}\big) \nabla \eta + K^{\eps} )  \qquad \text{on } \domain,
\end{align*}
where $K^{\eps} = \chi_{B_{2L}(x_0)} \nabla k^{\eps} $, with $k^{\eps}$ being the solution of $\Delta k^{\eps} = \nabla \eta \cdot a^\transpose\big(\tfrac{\cdot}{\eps}\big) \nabla \hat h^{\eps}$ on $\domain \cap B_{2L}(x_0)$ with homogeneous Dirichlet boundary data on $\partial \domain \cap B_{2L}(x_0)$ and homogeneous Neumann boundary data on $\domain \cap \partial B_{2L}(x_0)$. Since both $h^{\eps} = 0$ and $\eta = 0$ on $\partial\domain$, we see that $w^{\eps} = 0$ on $\partial\domain$ as well.

Applying \eqref{Lemma_4_est_1} to $w^{\eps}$ and using that $L \geq r^{\domain}_*(x_0)$, 
we have that 
\begin{align}
\label{Lemma_4_Lemma_5}
\begin{split}
&\bigg( \int_{\domain}  |\nabla w^{\eps}|^{2p} \omega_{\alpha_0, L} \dx \bigg)^{\frac{1}{2p}} \lesssim \bigg( \int_{\domain} |\hat h^{\eps} \nabla \eta|^{2p}  \omega_{\alpha_1,L}  \dx \bigg)^{\frac{1}{2p}} +\bigg( \int_{\domain} |K^{\eps}|^{2p}  \omega_{\alpha_1,L} \dx \bigg)^{\frac{1}{2p}} , 
\end{split}
\end{align}
for $1< p < \bar{p}$ and $0 \leq \alpha_0 < \alpha_1 < d(2p-1)$.  

We then treat the two terms on the right-hand side of \eqref{Lemma_4_Lemma_5} separately, beginning with the second term. For this, we first notice that the standard Meyers estimate applied to $k^{\eps}$ gives
\begin{align*}
\int_{ \domain \cap B_{2L}(x_0)} |\nabla k^{\eps}|^{2p} \dx  \lesssim \int_{\domain \cap B_{\frac{3L}{4}}(x_0) \setminus B_{\frac{L}{2}}(x_0)} |\nabla \hat{h}^{\eps}|^{2p}  \dx ,
\end{align*}
where we have used that $|\nabla \eta | \leq \frac{9}{L}$. Since $\omega_{\alpha_1, L} \sim 1$ in the support of $K^{\eps}$, this implies that  
\begin{align}
\label{lemma_5_new}
\begin{split}
&\bigg( \int_{\domain} |K^{\eps}|^{2p}  \omega_{\alpha_1, L }  \dx \bigg)^{\frac{1}{2p}} \lesssim \bigg(\int_{ B_{\frac{3L}{4}}(x_0) \setminus B_{\frac{L}{2}}(x_0)} |\nabla \hat{h}^{\eps}|^{2p}  \omega_{\alpha_1, L }   \dx  \bigg)^{\frac{1}{2p}}.
\end{split}
\end{align}
In order to estimate the first integral on the right-hand side of \eqref{Lemma_4_Lemma_5}, we again use $|\nabla \eta | \leq \frac{9}{L}$ and the definition of the weight \eqref{omega} to write
\begin{align}
\label{Lemma_5_intermediate_1}
\begin{split}
& \bigg( \int_{\domain} |\hat h^{\eps} \nabla \eta|^{2p}  \omega_{\alpha_1, L } \dx \bigg)^{\frac{1}{2p}}  \lesssim \frac{1}{L} \bigg( \int_{B_{\frac{3L}{4}}(x_0) \setminus B_{\frac{L}{2}}(x_0)} |\hat h^{\eps}|^{2p}  \omega_{\alpha_1-2p,  L } \dx \bigg)^{\frac{1}{2p}}.
\end{split}
\end{align}
To further process the right-hand side we show that 
\begin{align}
\label{sens_eq05}
\begin{split}
& \bigg( \int_{ B_{\frac{3L}{4}}(x_0) \setminus B_{\frac{L}{2}}(x_0) } |\hat h^{\eps}|^{2p} \omega_{\alpha_1 - 2p, L }  \dx \bigg)^{\frac{1}{2p}} \lesssim L    \bigg( \int_{\Rd^d \setminus B_{\frac{L}{2}}(x_0) } |\nabla \hat h^{\eps}|^{2p} \omega_{\alpha_1, L }  \dx   \bigg)^{\frac{1}{2p}},
 \end{split}
\end{align}
which, when combined with \eqref{Lemma_5_intermediate_1}, yields
\begin{align}
\label{Lemma_5_intermediate_2}
\begin{split}
& \bigg( \int_{\domain} |\hat h^{\eps} \nabla \eta|^{2p}  \omega_{\alpha_1, L } \dx \bigg)^{\frac{1}{2p}} \lesssim  \bigg( \int_{\Rd^d \setminus B_{\frac{L}{2}}(x_0) } |\nabla \hat h^{\eps}|^{2p} \omega_{\alpha_1, L }   \dx \bigg)^{\frac{1}{2p}}.
\end{split}
\end{align}

We now prove \eqref{sens_eq05}. Using that $ \omega_{\alpha_1 - 2p, L }  \sim 1$ on $B_{\frac{3L}{4}}(x_0) \setminus B_{\frac{L}{2}}(x_0)$ and the critical Sobolev embedding along with H\"{o}lder's inequality, we obtain 
\begin{align*}
 &\bigg( \int_{ B_{\frac{3L}{4}}(x_0) \setminus B_{\frac{L}{2}}(x_0) } |\hat h^{\eps}|^{2p} \omega_{\alpha_1 - 2p, L }  \dx  \bigg)^{\frac{1}{2p}}
  \lesssim  \bigg( \int_{ \Rd^d \setminus B_{\frac{L}{2}}(x_0) } |\nabla \hat h^{\eps}|^{\frac{2pd}{2p+d}}  \dx
 \bigg)^{\frac{2p+d}{2pd}}\\
 &~~~~~~~~~~~~~~~~~\lesssim  \bigg( \int_{ \Rd^d \setminus B_{\frac{L}{2}}(x_0) } |\nabla \hat h^{\eps}|^{2p} \omega_{\alpha_1, L} \dx
 \bigg)^{\frac{1}{2p}}  \bigg( \int_{ \Rd^d \setminus B_{\frac{L}{2}}(x_0) } |\omega_{\alpha_1, L}|^{-\frac{d}{2p}}  \dx
 \bigg)^{\frac{1}{d}}. 
\end{align*}
By the definition \eqref{omega} we see that 
\begin{align*}
 \bigg( \int_{ \Rd^d \setminus B_{\frac{L}{2}}(x_0) } |\omega_{\alpha_1, L}|^{-\frac{d}{2p}}   \dx
 \bigg)^{\frac{1}{d}} \sim L
\end{align*}
for any $\alpha_1>2p$.

Together \eqref{Lemma_4_Lemma_5}, \eqref{lemma_5_new}, and \eqref{Lemma_5_intermediate_2} give that 
\begin{align}
\label{Lemma_4_Lemma_5_2}
\begin{split}
&\bigg( \int_{\domain}   |\nabla w|^{2p}  \omega_{\alpha_0, L }  \dx \bigg)^{\frac{1}{2p}} \lesssim \bigg( \int_{\Rd^d \setminus B_{\frac{L}{2}}(x_0) } |\nabla \hat h^{\eps}|^{2p} \omega_{\alpha_1, L }   \dx   \bigg)^{\frac{1}{2p}},
\end{split}
\end{align}
for $1< p < \bar{p}$ and $0 \leq \alpha_0 < \alpha_1 < d(2p-1)$ with $\alpha_1>2p$ (observe that the latter two conditions require $d\geq 3$). Now, since $L \geq 2(r^{\domain}_*(x_0) + r)$, from \eqref{Meyers2} we obtain
\begin{align*}
& \bigg( \int_{\Rd^d \setminus B_{\frac{L}{2}}(x_0)} |\nabla \hat h^{\eps}|^{2p} \omega_{\alpha_1, L}   \dx   \bigg)^{\frac{1}{2p}} \lesssim \bigg(\frac{L}{r+r^{\domain}_*(x_0)} +1 \bigg)^{-\frac{\alpha_1}{2p}} \bigg( \int_{B_r(x_0)} |\eps g|^{2p} \dx  \bigg)^{\frac{1}{2p}}.
\end{align*}
Having made this observation, \eqref{Lemma_4_Lemma_5_2} becomes 
\begin{align}
\label{Lemma_4_Lemma_5_3}
\begin{split}
&\bigg( \int_{\domain} |\nabla w^{\eps}|^{2p} \omega_{\alpha_0, L } \dx \bigg)^{\frac{1}{2p}}   \lesssim \bigg(\frac{L}{r + r^{\domain}_*(x_0)} +1 \bigg)^{-\frac{\alpha_1}{2p}} \bigg( \int_{B_r(x_0)} |\eps g|^{2p} \dx \bigg)^{\frac{1}{2p}}.
\end{split}
\end{align}
To go from \eqref{Lemma_4_Lemma_5_3} to \eqref{Meyers3}, by the triangle inequality and by $w^{\eps} = h^{\eps} - \eta \hat h^{\eps}$ it is enough to show 
\begin{align*}
\begin{split}
& \bigg( \int_{\domain} |\nabla((1-\eta) \hat h^{\eps})|^{2p} \omega_{\alpha_0, L } \dx   \bigg)^{\frac{1}{2p}}\lesssim
\bigg( \frac{L}{r + r^{\domain}_*(x_0)} + 1 \bigg)^{-\frac{\alpha_1}{2p}} 
\bigg( \int_{B_r(x_0)} |\eps g|^{2p} \dx  \bigg)^{\frac{1}{2p}}.
\end{split}
\end{align*}
For this, we use that $\supp(1 - \eta)\subseteq \domain  \setminus B_{\frac{L}{2}}(x_0)$ and \eqref{Meyers2} to write 
\begin{align*}
\bigg( \int_{\domain} |(1-\eta) \nabla \hat h^{\eps}|^{2p} \omega_{\alpha_0,L} \dx \bigg)^{\frac{1}{2p}} & \leq C
\bigg( \int_{\domain  \setminus B_{\frac{L}{2}}(x_0)} |\nabla \hat h^{\eps}|^{2p} \omega_{\alpha_1,L} \dx \bigg)^{\frac{1}{2p}}\\
& \leq C
\bigg( \frac{L}{r + r^{\domain}_*(x_0)} + 1 \bigg)^{-\frac{\alpha_1}{2p}} 
\bigg( \int_{B_r(x_0)} |\eps g|^{2p} \dx  \bigg)^{\frac{1}{2p}}
\end{align*}
and we bound the term $\big( \int_{\domain} |\hat h^{\eps} \nabla(1-\eta)|^{2p} \omega_{\alpha_0, L } \dx \big)^{\frac{1}{2p}}$ via \eqref{Lemma_5_intermediate_2} and \eqref{Meyers2}.
\end{proof}

\subsection{Proof of Proposition \ref{sens_Prop1}}
\label{proof_prop_5}

Applying the $L^{2q}$-version of the spectral gap inequality, \eqref{pLSI}, to $F^{\eps}$ as defined in \eqref{defn_F} and using the expression for the vertical derivative $\frac{\partial F^{\eps}}{\partial a^{\eps}}$ derived in \eqref{MalliavinDerivative}, we find that 
\begin{align}
\label{pLSI_new}
\begin{split}
&\en{ (F^{\eps} - \en{F^{\eps}})^{2q}}^{\frac{1}{2q}}
\\
& \lesssim q^{\frac{1}{2}} \varepsilon^{\frac{d}{2}}  \en{ \bigg(  \int_{\Rd^d}  \bigg( \fint_{B_{\varepsilon}(x)} \bigg| \frac{\partial F^{\eps}}{ \partial a^{\eps}}  \bigg| \dd y \bigg)^{2} \dx \bigg)^q  }^{\frac{1}{2q}}\\
& \lesssim q^{\frac{1}{2}} \varepsilon^{\frac{d}{2}}  \Bigg[\en{  \bigg(\int_{\Rd^d}  \bigg( \fint_{B_{\varepsilon}(x)} \big| \ \chi_\domain \nabla u_{\eps} \otimes \nabla h^{\eps} \big| \dd y \bigg)^{2}  \dx \bigg)^q }^{\frac{1}{2q}} \\
& \qquad  \qquad \quad  +  \en{  \bigg(\int_{\Rd^d}  \bigg( \fint_{B_{\varepsilon}(x)} \big|(e_i+\nabla\phi^{\eps}_i) \otimes (\nabla v^{\eps}_1 + \nabla v^{\eps}_2)\big| \dd y \bigg)^{2}   \dx \bigg)^q}^{\frac{1}{2q}} \Bigg].
\end{split}
\end{align}
With the help of the estimates on the auxiliary functions $h^{\eps}$, $v^{\eps}_1$, and $v^{\eps}_2$  that we have obtained in Section \ref{aux_estimates}, we now estimate the two terms on right-hand side of \eqref{pLSI_new}.

For the first term on the right-hand side of \eqref{pLSI_new}, we show the following bound:

\begin{lemma}\label{sens_Lm4} Adopt the assumptions of Proposition \ref{sens_Prop1}, let $h^{\eps}$ solve \eqref{Definitionh}, and define $L := \dist(x_0,\partial\domain)+\varepsilon$. Then, for $q \ge \frac{p}{4(p-1)}$, we have that 
 \begin{align}
  \label{sens_M1}
 \begin{split}
 &    \en{  \bigg(\int_{\Rd^d}  \bigg( \fint_{B_{\varepsilon}(x)} \big| \ \chi_\domain \nabla u_{\eps} \otimes \nabla h^{\eps} \big| \dd y \bigg)^{2} \dx  \bigg)^q }^{\frac{1}{2q}}   \\
 &  \lesssim_{d, \lambda, \domain, f, p, \beta} q^{C(d, \lambda, \domain)}
\bigg(
r^{-d/2} 
 \bigg( \frac{r}{L}\bigg)^{(1 - \beta) \frac d2}
+
 \frac{\varepsilon}{L}  r^{-\frac{d}{2}}
\bigg) 
\en{ \bigg( \frac{r^{\domain}_*(x_0)}{\varepsilon} + 1 \bigg)^{\frac{4qd(p( 2- \beta) - 1)}{p}}}^{\frac{1}{4q}},
     \end{split}
 \end{align}
 for $0< \beta \ll 1$.
\end{lemma}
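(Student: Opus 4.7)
My plan is to decouple the $\nabla u$ and $\nabla h$ factors in the integrand by a Cauchy--Schwarz argument at the level of the local $y$-average, then apply a weighted H\"older inequality in the outer $x$-integral to split the problem into controlling $\nabla h$ and $\nabla u$ separately. For $\nabla h$ I would invoke the weighted Meyers estimate from Corollary~\ref{sens_Cor1}, and for $\nabla u$ a homogenization / two-scale expansion argument for the Dirichlet problem on $\domain$ to extract the decay factor $(r/L)^{(1-\beta)d/2}+\varepsilon/L$. Finally, passing to $L^{2q}$-moments and using the stretched exponential moments of the large-scale regularity radius $r_*^\domain(x_0)$ combines everything into~\eqref{sens_M1}.

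Concretely: Cauchy--Schwarz on the inner $y$-average gives
\begin{equation*}
\fint_{B_\varepsilon(x)} |\chi_\domain \nabla u \otimes \nabla h|\,dy \leq \bigg(\fint_{B_\varepsilon(x)} \chi_\domain |\nabla u|^2\bigg)^{1/2}\bigg(\fint_{B_\varepsilon(x)} |\nabla h|^2\bigg)^{1/2},
\end{equation*}
after which squaring and applying H\"older in $x$ with conjugate exponents $p$ (slightly above $1$) and $p'=p/(p-1)$, using the weight $\omega_{\alpha, r+r_*^\domain(x_0)}$ of Lemma~\ref{sens_Lm0}, yields the product of a weighted $L^p$-norm of $\fint|\nabla h|^2$ (with a positive power of $\omega$) and a weighted $L^{p'}$-norm of $\fint|\nabla u|^2$ (with a negative power of $\omega$). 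Jensen's inequality passes the $p$-th power inside $\fint_{B_\varepsilon(x)}$ and, together with the slow variation of $\omega$ on $\varepsilon$-balls, reduces the $\nabla h$-factor to an application of Corollary~\ref{sens_Cor1}; combined with $\|g\|_{L^{2p}(B_r(x_0))} \leq C r^{-d(2p-1)/(2p)}$ from~\eqref{g_avg}, this delivers a deterministic bound of order $r^{-d(2p-1)/(2p)}$.

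The crux is controlling the $\nabla u$-factor and extracting the decay on scale $L$. I would decompose $u = f\phi_i + w$, where $w$ has homogeneous Dirichlet data and solves $-\nabla\cdot(a\nabla w)=\nabla\cdot(a\nabla(f\phi_i))$, and approximate $w$ by a two-scale expansion around the homogenized Dirichlet problem on $\domain$ with boundary data $f\phi_i$. The macroscopic Dirichlet solution's gradient decays like $(r/L)^{d/2}$ away from $\partial\domain$ because of CLT-type cancellations in the (averaged) boundary data $\phi_i$ on scale $L$, while the two-scale expansion error contributes the $\varepsilon/L$ term. Large-scale regularity at radius $r_*^\domain(x_0)$ lifts the $\varepsilon$-scale averages to macroscopic bounds, generating polynomial factors of $r_*^\domain(x_0)/\varepsilon + 1$ that are ultimately absorbed into the moment prefactor appearing on the right-hand side of~\eqref{sens_M1}. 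The restriction $q \geq p/(4(p-1))$ is imposed precisely to ensure integrability of these random powers against the stretched exponential distribution of $r_*^\domain$.

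The main obstacle is Step 4: producing the decay of $\nabla u$ on scale $L$. Since Theorem~\ref{thm_1} (which would yield this decay directly) is what we are ultimately working towards and not yet available, one cannot simply appeal to a pointwise bound on $\nabla \theta_i$. Instead the argument must proceed through a two-scale expansion tailored to the Dirichlet problem~\eqref{sens_defu}, combined with pointwise estimates on the Dirichlet Green's function of the homogenized operator on $\domain$. The bookkeeping between the weighted $L^{2p}$ estimates from Step 3, the $L^{2p'}$ estimates needed in Step 4, the large-scale regularity scale, and the final $L^{2q}$-moments is where most of the technical care is required.
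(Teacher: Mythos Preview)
Your Steps 1--3 (Cauchy--Schwarz on the $y$-average, weighted H\"older in $x$ with the weight $\omega_{\alpha,r+r_*^\domain(x_0)}$, and Corollary~\ref{sens_Cor1} for the $\nabla h$-factor) match the paper's argument via Lemma~\ref{sens_Lmaux} exactly. The gap is in Step~4, where you misidentify the mechanism behind the two decay terms in~\eqref{sens_M1}.

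In the paper, the two terms arise from an \emph{inner/outer decomposition} of the $x$-integral into $B_{L/2}(x_0)$ and $\domain\setminus B_{L/2}(x_0)$, not from a two-scale expansion of $u$. On the inner region the paper invokes Lemma~\ref{sens_LmEstu}, a pointwise annealed bound $\langle|\nabla u(x)|^q\rangle^{1/q}\lesssim q^C\,\varepsilon/L$ proved directly via the Green's function representation (Lemma~\ref{GradientBoundByIntegral}), and this is what produces the $\tfrac{\varepsilon}{L}r^{-d/2}$ term. On the outer region the paper uses only the trivial bound $\langle(\fint|\nabla u|^2)^q\rangle^{1/q}\lesssim q^C$; the extra factor $(r/L)^{(1-\beta)d/2}$ comes from the $\nabla h$-factor, by applying the weighted Meyers estimate~\eqref{Meyers1} with the weight now taken at scale $L+r_*^\domain(x_0)$ and exploiting that $g$ is supported in $B_r(x_0)$ while we integrate over $|x-x_0|\gtrsim L$. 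So the decay in $L$ is extracted from $\nabla h$, not from $\nabla u$.

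Your proposed route---a two-scale expansion of $u$ around the homogenized Dirichlet problem, with ``CLT-type cancellations in the boundary data $\phi_i$'' giving $(r/L)^{d/2}$ decay of the macroscopic gradient---is essentially the heuristic for the final result Theorem~\ref{thm_1} itself, and would be circular here (as you partly acknowledge). More importantly, it is unnecessary: you overlook that Lemma~\ref{sens_LmEstu} already gives the needed pointwise decay of $\langle|\nabla u|^q\rangle$ independently of Theorem~\ref{thm_1}. The restriction $q\geq p/(4(p-1))$ is not about integrability of $r_*^\domain$ against a stretched exponential, but is the condition under which Jensen's inequality applies after the second H\"older step (the exponent $\tfrac{2q(p-1)}{p}$ must be $\geq\tfrac12$, effectively, to pull the $q$-th moment inside the spatial integral).
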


To prove Lemma \ref{sens_Lm4}, we require the bounds on $h^{\eps}$ provided by Corollary \ref{sens_Cor1} as well as the following two technical results. The first of these is a simple consequence of H\"older's inequality:
\begin{lemma}\label{sens_Lmaux}
For any exponent $p>1$, any pair of non-negative functions $\varphi$ and $\psi$, and any positive weight function $\omega(x)$ with $\operatorname{osc}_{B_\eps(x)} \omega \leq \frac{1}{2}\omega(x)$ we have that
\begin{equation}\label{sens_eqaux}
   \int_{\Rd^d}\bigg(\fint_{B_{\varepsilon}(x)} \varphi\psi \, \dd y  \bigg)^2  \dx \lesssim 
   \bigg( \int_{\Rd^d} |\psi|^{2p}\omega \dx \bigg)^{\frac1p} \bigg( \int_{\Rd^d}  \omega^{-\frac{1}{p-1}} \bigg(\fint_{B_{\varepsilon}(x)}|\varphi|^2  \dd y \bigg)^\frac{p}{p-1} \dx \bigg)^{\frac{p-1}{p}}.
  \end{equation}
\end{lemma}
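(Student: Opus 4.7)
The proof is a sequence of three applications of elementary inequalities, with the oscillation hypothesis on $\omega$ needed only at the very end to swap a ball-average with the weight.

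\textbf{Step 1: Cauchy--Schwarz inside the ball.} First I would apply Cauchy--Schwarz inside the inner average to separate $\varphi$ and $\psi$:
\begin{align*}
\left(\fint_{B_\varepsilon(x)} \varphi\psi\,\dd y\right)^{\!2}
\leq \left(\fint_{B_\varepsilon(x)} \varphi^2\,\dd y\right)\left(\fint_{B_\varepsilon(x)} \psi^2\,\dd y\right).
\end{align*}
Integrating in $x$ this already reduces the claim to an estimate for a product of two ball-averages.

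\textbf{Step 2: H\"older with the weight $\omega$.} Next I would apply H\"older's inequality in $x$ with dual exponents $\tfrac{p}{p-1}$ and $p$, introducing $\omega^{1/p}$ on one side and $\omega^{-1/p}$ on the other, to obtain
\begin{align*}
\int_{\Rd^d}\!\!\left(\fint_{B_\varepsilon(x)}\!\varphi^2\right)\!\!\left(\fint_{B_\varepsilon(x)}\!\psi^2\right)\dd x
\leq \left(\int_{\Rd^d}\!\!\omega^{-\frac{1}{p-1}}\!\!\left(\fint_{B_\varepsilon(x)}\!\varphi^2\right)^{\!\!\frac{p}{p-1}}\!\dd x\right)^{\!\!\frac{p-1}{p}}\!\!\left(\int_{\Rd^d}\!\omega(x)\!\left(\fint_{B_\varepsilon(x)}\!\psi^2\right)^{\!p}\!\dd x\right)^{\!\!\frac{1}{p}}.
\end{align*}
The first factor is already in the form claimed in \eqref{sens_eqaux}, so only the second needs further work.

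\textbf{Step 3: Jensen, Fubini, and the oscillation bound.} Since $p\geq 1$, Jensen's inequality yields $\bigl(\fint_{B_\varepsilon(x)}\psi^2\bigr)^{p} \leq \fint_{B_\varepsilon(x)}\psi^{2p}\,\dd y$. After applying this, Fubini (using the symmetry $y\in B_\varepsilon(x)\iff x\in B_\varepsilon(y)$) gives
\begin{align*}
\int_{\Rd^d}\omega(x)\fint_{B_\varepsilon(x)}\psi^{2p}(y)\,\dd y\,\dd x
= \int_{\Rd^d}\psi^{2p}(y)\fint_{B_\varepsilon(y)}\omega(x)\,\dd x\,\dd y.
\end{align*}
At this point the oscillation assumption enters: because $\operatorname{osc}_{B_\varepsilon(y)}\omega \leq \tfrac12 \omega(y)$, one has $\fint_{B_\varepsilon(y)}\omega(x)\,\dd x \leq \tfrac32\,\omega(y)$, so the right-hand side is bounded by $\tfrac{3}{2}\int \psi^{2p}\omega\,\dd y$. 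Raising to the $1/p$ and combining with Step 2 yields \eqref{sens_eqaux}.

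No serious obstacle is expected; the only point requiring mild care is the Fubini/weight-swap in Step 3, where one must check that the oscillation hypothesis is precisely what is needed so that $\omega(x)$ and $\omega(y)$ are comparable whenever $|x-y|\leq \varepsilon$. Everything else is a routine deployment of Cauchy--Schwarz, H\"older, and Jensen, and the implicit constant depends only on $p$ and $d$.
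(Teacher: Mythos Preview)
Your proof is correct and follows essentially the same route as the paper: Cauchy--Schwarz inside the ball, H\"older in $x$ with exponents $p$ and $\tfrac{p}{p-1}$, then Jensen together with the oscillation hypothesis. The only cosmetic difference is that the paper inserts the weight $\omega(y)$ already in the inner Cauchy--Schwarz step (so the oscillation bound is invoked to pull $\omega^{-1/p}$ out of the $\varphi$-average), whereas you insert $\omega(x)$ in the outer H\"older step and use the oscillation bound after Fubini on the $\psi$-side; both placements are equivalent.
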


\noindent The second technical result is a decay estimate on the $q$-th moment of $|\nabla u_{\eps}(x_0)|$ in terms of $\dist(x_0, \partial \domain)$, which we obtain by applying \eqref{PoissonKernelEstimateAHarmonic} to $u_{\eps}$ and taking the expectation. 

\begin{lemma}\label{sens_LmEstu}
Under the assumptions of Proposition \ref{sens_Prop1}, we have that 
 \begin{align} 
 \label{sens_u01}
\en{ |\nabla u_{\eps} (x_0)|^q}^{\frac 1q} \lesssim_{d, \lambda, \domain,f,\delta} \frac{q^{C(d, \lambda, \domain, f)} }{\dist(x_0,\partial\domain) + \varepsilon} \bigg(\frac{\dist(x_0,\partial\domain)+\eps}{\dist(x_0,\partial\domain)}\bigg)^\delta
\end{align}
for any $\delta>0$ and any $q \geq 1$.
 \end{lemma}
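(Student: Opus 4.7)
The plan is to bound $\en{|\nabla u(x_0)|^q}^{1/q}$ by combining a large-scale pointwise-to-average reduction, an interior Caccioppoli inequality, and --- as the main technical step --- a uniform $L^q$-bound on $u$ itself of order $\eps$. I would split according to the size of $\dist(x_0,\partial\domain)$ relative to $\eps$.

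In the regime $\dist(x_0,\partial\domain) \lesssim \eps$, the right-hand side of \eqref{sens_u01} is of order one, and I would conclude by a direct application of the Schauder estimate (Lemma~\ref{Schauder}) to $u$ on a ball of radius $\eps$, together with an interior energy estimate for $u$ and the stationary bounds on $\phi_i$ of Theorem~\ref{CorrectorBoundGNO4}.

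In the main regime $\dist(x_0,\partial\domain) \gg \eps$, I would set $R := \tfrac14 \dist(x_0,\partial\domain)$, so that $B_{2R}(x_0) \subset \domain$. An application of Lemma~\ref{GradientBoundByIntegral} to the $a$-harmonic function $u$ at scale $R$ (with standard bookkeeping for the event $R \lesssim r^\domain_*(x_0)$, which is absorbed into the stretched-exponential moments of $r^\domain_*$) gives
\begin{equation*}
|\nabla u(x_0)| \le \mathcal{C}(a,x_0) \bigg(\fint_{B_R(x_0)} |\nabla u|^2 \dx\bigg)^{1/2}.
\end{equation*}
Since $u$ is $a$-harmonic on $B_{2R}(x_0)$, an interior Caccioppoli inequality applied to $u-c$ for a suitable constant $c$ (e.g.\ $c=0$ in the half-space case via the normalization $\theta_i \to 0$ at infinity, or $c$ equal to a boundary average of $u$ in the bounded case) yields
\begin{equation*}
\bigg(\fint_{B_R(x_0)} |\nabla u|^2 \dx\bigg)^{1/2} \lesssim \frac{1}{R} \bigg(\fint_{B_{2R}(x_0)} (u-c)^2 \dx\bigg)^{1/2}.
\end{equation*}
Taking $2q$-th stochastic moments and passing the expectation inside via Jensen, the proof reduces to the uniform pointwise control $\sup_{y \in B_{2R}(x_0)} \en{|u(y)-c|^{2q}}^{1/(2q)} \lesssim \eps\, q^m$.

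The hard part will be this uniform $L^q$-bound on $u$ at order $\eps$. I would establish it via the Poisson-kernel representation $u(y) = \int_{\partial\domain} P(y,z)\, f(z)\, \phi_i(z) \dd S(z)$, where $P$ is the Poisson kernel of $-\nabla \cdot(a\nabla)$ on $\domain$. Under (A1)--(A4), boundary Schauder theory applied to the adjoint Green's function provides a pointwise estimate $0 \le P(y,z) \le \mathcal{C}(a,z)\, \dist(y,\partial\domain)/|y-z|^d$ with a stretched-exponential random prefactor, together with the normalization $\int_{\partial\domain} P(y,z) \dd S(z) = 1$. Minkowski's inequality combined with this bound and with the stationary control $\en{|\phi_i(z)|^{4q}}^{1/(4q)} \lesssim \eps\, q^m$ (from Theorem~\ref{CorrectorBoundGNO4} together with Schauder upgrading $L^2$-on-balls to pointwise control) delivers the required estimate; integrability on the unbounded boundary $\partial\Hd$ is guaranteed in dimension $d \geq 3$ by $\int_{\partial\Hd} \dist(y,\partial\Hd)/|y-z|^d \dd S(z) \lesssim 1$. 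The delicate ingredient is the quenched Poisson-kernel pointwise bound with a random prefactor of well-controlled moments, which is what drives the polynomial-in-$q$ constant in the final estimate.
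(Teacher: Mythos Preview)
Your approach is correct in outline but takes a genuinely different route from the paper's. The paper avoids the Poisson kernel entirely: it introduces a cutoff $\eta$ supported in an $\eps$-neighborhood of $\partial\domain$ with $\eta=1$ on $\partial\domain$, and sets $w:=u-\eta f\phi_i$. Then $w$ has \emph{homogeneous} Dirichlet data and satisfies $-\nabla\cdot(a\nabla w)=\nabla\cdot(a\nabla(\eta f\phi_i))$, with a right-hand side supported in $\domain_\eps:=\{x\in\domain:\dist(x,\partial\domain)<\eps\}$. Since $u=w$ near $x_0$ when $\dist(x_0,\partial\domain)\geq 3\eps$, Lemma~\ref{GradientBoundByIntegral} applied to $w$ directly gives
\[
|\nabla u(x_0)|\leq \mathcal{C}(a,x_0)\int_{\domain_\eps}\mathcal{C}(a,x)\,\frac{|\nabla(\eta f\phi_i)|}{|x-x_0|^d}\dx,
\]
and the $\eps/\dist(x_0,\partial\domain)$ rate drops out from the thinness of $\domain_\eps$ and the corrector bounds \eqref{GNO_bound_balls}. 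The near-boundary case $\dist(x_0,\partial\domain)\leq 3\eps$ is then handled by a further local splitting of $w$.

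Your detour through Caccioppoli and a pointwise $L^q$-bound on $u$ via the Poisson kernel is valid, but it trades one Green's-function estimate (the interior gradient bound encoded in Lemma~\ref{GradientBoundByIntegral}) for a slightly stronger one (a quenched \emph{boundary} gradient bound for the adjoint Green's function, giving $P(y,z)\leq \mathcal{C}(a,y)\mathcal{C}(a,z)\,\dist(y,\partial\domain)/|y-z|^d$). This is derivable from the same regularity theory (Proposition~\ref{large_scale_reg_domain} applied at the boundary, plus Schauder), but it is not stated in the paper and would need to be proved separately. The paper's subtraction trick is more economical: it converts inhomogeneous boundary data into a localized right-hand side, so that only the already-available Lemma~\ref{GradientBoundByIntegral} is needed. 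Two minor points: your reference to Lemma~\ref{GradientBoundByIntegral} for the pointwise-to-average step should be to \eqref{Reg06} of Proposition~\ref{large_scale_reg_domain}; and the normalization $\theta_i\to 0$ at infinity is not correct (it converges to a nonzero random constant), though this is irrelevant once you bound $u$ itself via the Poisson representation.
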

\noindent We prove both Lemmas \ref{sens_Lmaux} and \ref{sens_LmEstu} in Section \ref{aux_lems_fluc}.\\

For now taking for granted Lemmas \ref{sens_Lmaux} and \ref{sens_LmEstu}, here is the argument for Lemma~\ref{sens_Lm4}:

\begin{proof}[Proof of Lemma \ref{sens_Lm4}]

To obtain \eqref{sens_M1} we separately consider the ``inner" and ``outer" contributions of the integrand, corresponding to the domains $B_{L/2}(x_0)$ respectively $\domain \setminus B_{L/2}(x_0)$.

Starting with the inner contribution and, using~\eqref{sens_eqaux} of Lemma \ref{sens_Lmaux} with $\psi= |\nabla h^{\eps}|$, $\varphi = |\chi_{\domain \cap B_{\frac{L}{2}} (x_0)} \nabla u_{\eps}|$, and $\omega = \omega_{\gamma,r + r^{\domain}_*(x_0)}$ for $\gamma > 0$ to be chosen later, we obtain that
\begin{align*}
& \int_{\Rd^d}\bigg( \fint_{B_{\varepsilon}(x)} \big|  \chi_{\domain \cap B_{\frac{L}{2}} (x_0)} \nabla u_{\eps} \otimes \nabla h^{\eps} \big| \dd y \bigg)^{2} \dx
\\&
\leq \bigg( \int_{\Rd^d} |\nabla h^{\eps}|^{2p} \omega_{\gamma,r+r^{\domain}_*(x_0)} \dx   \bigg)^{\frac1p}
    \\&~~~~~~~~~~\times
\bigg( \int_{\Rd^d} \omega_{\gamma, r + r^{\domain}_*(x_0)}^{-\frac{1}{p-1}} \bigg(\fint_{B_{\varepsilon}(x)} |\chi_{\domain \cap B_{\frac{L}{2}} (x_0)} \nabla u_{\eps}|^2 \ \dd y \bigg)^\frac{p}{p-1} \dx \bigg)^{\frac{p-1}{p}}.
\end{align*}
Taking the $q$-th moment of both sides, we again use H\"older's inequality to the extent of
\begin{align*}
 &\en{\bigg( \int_{\Rd^d}\bigg( \fint_{B_{\varepsilon}(x)} \big|  \chi_{\domain \cap B_{\frac{L}{2}} (x_0)} \nabla u_{\eps} \otimes \nabla h^{\eps} \big| \dd y \bigg)^{2} \dx \bigg)^q }^{\frac{1}{2q}}\\
&   \leq  \en{
    \bigg( \int_{\Rd^d} |\nabla h^{\eps}|^{2p} \omega_{\gamma,r+r^{\domain}_*(x_0)}  \dx \bigg)^{\frac{2q}{p}} }^{\frac{1}{4q}}
    \\&~~~~~~~~~~\times
    \en{ \bigg( \int_{\Rd^d} \omega_{\gamma,r+r^{\domain}_*(x_0)}^{-\frac{1}{p-1}} \bigg(\fint_{B_{\varepsilon}(x)} |\chi_{\domain \cap B_{\frac{L}{2}} (x_0)} \nabla u_{\eps}|^2 \ \dd y \bigg)^\frac{p}{p-1}  \dx\bigg)^{\frac{2q(p-1)}{p}} }^{\frac{1}{4q}}.
\end{align*}
Using the definition of the weights \eqref{omega}, that $r\geq \varepsilon$, and applications of H\"{o}lder's and Jensen's inequalities (with the conditions $q \geq p/ 4(p-1)$ and $\gamma > d(p-1)$), we treat the second term on the right-hand side as
\begin{align*}
& \en{ \bigg( \int_{\Rd^d} \omega_{\gamma,r+r^{\domain}_*(x_0)}^{-\frac{1}{p-1}} \bigg(\fint_{B_{\varepsilon}(x)} |\chi_{\domain \cap B_{\frac{L}{2}} (x_0)} \nabla u_{\eps}|^2 \ \dd y \bigg)^\frac{p}{p-1} \dx \bigg)^{\frac{2q(p-1)}{p}} }^{\frac{1}{4q}}
\\
& \lesssim  \en{ \bigg( \int_{\Rd^d} \omega_{-\frac{\gamma}{p-1},r  } \bigg(1 + \frac{r^{\domain}_*(x_0)}{\varepsilon}\bigg)^{\frac{\gamma}{p-1}}\bigg(\fint_{B_{\varepsilon}(x)} |\chi_{\domain \cap B_{\frac{L}{2}} (x_0)} \nabla u_{\eps}|^2 \ \dd y \bigg)^\frac{p}{p-1} \dx \bigg)^{\frac{2q(p-1)}{p}} }^{\frac{1}{4q}}
\\
&\leq r^{\frac{d(p-1)}{2p}} \en{\bigg(1 + \frac{r^{\domain}_*(x_0)}{\varepsilon}\bigg)^{\frac{4\gamma q}{p}}}^{\frac{1}{8 q}} 
    \\&~~~~~~~~~~\times \en{ \bigg( r^{-d} \int_{\Rd^d} \omega_{-\frac{\gamma}{p-1},r  } \bigg(\fint_{B_{\varepsilon}(x)} |\chi_{\domain \cap B_{\frac{L}{2}} (x_0)} \nabla u_{\eps}|^2 \ \dd y \bigg)^\frac{p}{p-1} \dx \bigg)^{\frac{4q(p-1)}{p}} }^{\frac{1}{8q}}
\\
& \lesssim r^{\frac{d(p-1)}{2p}} \en{\bigg(1 + \frac{r^{\domain}_*(x_0)}{\varepsilon}\bigg)^{\frac{4\gamma q}{p}}}^{\frac{1}{8 q}}
    \\&~~~~~~~~~~\times
\en{  r^{-d} \int_{\Rd^d} \omega_{-\frac{\gamma}{p-1},r   } \bigg(\fint_{B_{\varepsilon}(x)} |\chi_{\domain \cap B_{\frac{L}{2}} (x_0)} \nabla u_{\eps}|^2 \ \dd y \bigg)^{4q} \dx }^{\frac{1}{8q}}.
\end{align*}
Combining the previous estimates with \eqref{sens_u01} from Lemma \ref{sens_LmEstu} gives that
\begin{align}
\label{inner_contribution_intermediate}
\begin{split}
 &\en{\bigg( \int_{\Rd^d}\bigg( \fint_{B_{\varepsilon}(x)} \big|  \chi_{\domain \cap B_{\frac{L}{2}} (x_0)} \nabla u_{\eps} \otimes \nabla h^{\eps}  \big| \dd y \bigg)^{2} \dx \bigg)^q }^{\frac{1}{2q}}\\
   &   \lesssim q^{C} r^{\frac{d}{2}} L^{-1} \en{
    \bigg( r^{-d}\int_{\Rd^d} |\nabla h^{\eps}|^{2p} \omega_{\gamma,r + r^{\domain}_*(x_0)} \dx \bigg)^{\frac{2q}{p}} }^{\frac{1}{4q}}  \en{\bigg(1 + \frac{r^{\domain}_*(x_0)}{\varepsilon}\bigg)^{\frac{4\gamma q}{p}}}^{\frac{1}{8 q}},
    \end{split}
\end{align}
notice that here we have used that we are on the ``inner contribution". To finish this estimate, we use \eqref{Meyers1} to write
\begin{align*}
\bigg( r^{-d} \int_{\domain} |\nabla h^{\eps}|^{2p} \omega_{\gamma,r + r^{\domain}_*(x_0)} \dx \bigg)^{\frac{1}{2p}} \lesssim \bigg( \fint_{B_r(x_0)} |\eps g|^{2p} \, \dx\bigg)^{\frac{1}{2p}},
\end{align*}
where we require that $\gamma < d(2p-1)$. Plugging this into \eqref{inner_contribution_intermediate} yields
\begin{align}
\label{icf}
\begin{split}
&  \en{\bigg( \int_{\Rd^d}\bigg( \fint_{B_{\varepsilon}(x)} \big|  \chi_{\domain \cap B_{\frac{L}{2}} (x_0)} \nabla u_{\eps} \otimes \nabla h^{\eps} \big| \dd y \bigg)^{2} \dx \bigg)^q }^{\frac{1}{2q}}\\
 &  \lesssim q^{C} r^{\frac{d}{2}}  \frac{\varepsilon}{L} \en{\bigg(1 + \frac{r^{\domain}_*(x_0)}{\varepsilon}\bigg)^{\frac{4\gamma q}{p}}}^{\frac{1}{8 q}} \bigg( \fint_{B_r(x_0)} |g|^{2p} \, \dx\bigg)^{\frac{1}{2p}},
 \end{split}
\end{align}
where the condition on $\gamma$ is that  $ d(p - 1) < \gamma < d(2p-1)$.

The estimate for the outer contribution proceeds in a similar fashion.
Again applying \eqref{sens_eqaux} of Lemma \ref{sens_Lmaux} gives, using also H\"{o}lder's inequality,
\begin{align*}
& \en{ \bigg( \int_{\Rd^d} \bigg( \fint_{B_{\varepsilon}(x)} | \chi_{\domain \setminus B_{\frac L2}(x_0)} \nabla u_{\eps} \otimes \nabla h^{\eps}| \, \dd y  \bigg)^2 \dx \bigg)^{q} }^\frac{1}{2q}
\\
&\le 
\en{ \bigg( \int_{\domain \setminus B_{\frac L2}(x_0)+B_\eps} |\nabla h^{\eps}|^{2p} \omega_{\gamma,L + r^{\domain}_*(x_0)} \dx \bigg)^{\frac{2q}{p}} }^{\frac{1}{4q}} \\
& \hspace{2cm} \times \en{ \bigg( \int_{\domain \setminus B_{\frac L2}(x_0)+B_\eps} \omega_{\gamma,L+r^{\domain}_*(x_0)}^{-\frac{1}{p-1}}  \bigg(\fint_{B_{\varepsilon}(x)} \chi_\domain |\nabla u_{\eps}|^2 \, \dd y \bigg)^\frac{p}{p-1} \dx \bigg)^\frac{ 2q(p-1)}{p} }^{\frac{1}{4q}}.
\end{align*}
Writing $L + r^{\domain}_*(x_0) =  L(\frac{r^{\domain}_*(x_0)}{L}+1)$ and using that $L \geq \varepsilon$, we get by H\"older's inequality 
\begin{align*}
 &   \en{ \bigg( \int_{\domain \setminus B_{\frac L2}(x_0)+B_\eps} \omega_{-\frac{\gamma }{p-1},L + r^{\domain}_*(x_0)} \bigg(\fint_{B_{\varepsilon}(x)} \chi_\domain | \nabla u_{\eps}|^2 \, \dd y \bigg)^\frac{ p}{p-1} \dx \bigg)^\frac{ 2q(p-1)}{p} }^{\frac{1}{4q}}
\\
 &  \lesssim  \en{ \bigg( \int_{\domain \setminus B_{\frac L2}(x_0)+B_\eps} \omega_{-\frac{\gamma }{p-1},L } \bigg(1 + \frac{r^{\domain}_*(x_0)}{\varepsilon}\bigg)^{\frac{\gamma}{p-1}} \bigg(\fint_{B_{\varepsilon}(x)} \chi_\domain | \nabla u_{\eps}|^2 \, \dd y \bigg)^\frac{ p}{p-1} \dx \bigg)^\frac{ 2q(p-1)}{p} }^{\frac{1}{4q}}
\\
  &  \leq L^{\frac{d(p-1)}{2p}}   \en{ \bigg(1 + \frac{r^{\domain}_*(x_0)}{\varepsilon}\bigg)^{\frac{4 q \gamma}{p}}}^{\frac{1}{8q}}
 \\
  & \qquad \times\en{  \bigg( L^{-d}  \int_{\domain \setminus B_{\frac L2}(x_0)+B_\eps}    \omega_{-\frac{\gamma }{p-1},L }   \bigg( \fint_{B_{\varepsilon}(x)} \chi_\domain |\nabla u_{\eps}|^2 \, \dd y \bigg)^\frac{ p}{p-1} \dx \bigg)^\frac{ 4q(p-1)}{p} }^{\frac{1}{8q}}.
\end{align*}
We then apply Jensen's inequality and inject the estimate 
\begin{align*}
\en{ \bigg(\fint_{B_{\varepsilon} (x)} \chi_\domain |\nabla u_{\eps}|^2 \, \dd y \bigg)^{4q} }^{\frac{1}{8q}} \lesssim q^{C} \eps^{-1}
\end{align*}
(which follows from \eqref{sens_u01} of Lemma~\ref{sens_LmEstu}), which yields 
\begin{align*}
 &   \en{ \bigg( \int_{\domain \setminus B_{\frac L2}(x_0)+B_\eps} \omega_{-\frac{\gamma }{p-1},L + r^{\domain}_*(x_0)} \bigg(\fint_{B_{\varepsilon}(x)} \chi_\domain | \nabla u_{\eps}|^2 \, \dd y \bigg)^\frac{p}{p-1} \dx \bigg)^\frac{ 2q(p-1)}{p} }^{\frac{1}{4q}}
\\
\\&
\lesssim  q^{C} \eps^{-1}  L^{\frac{d(p-1)}{2p}}   \en{ \bigg(1 + \frac{r^{\domain}_*(x_0)}{\varepsilon}\bigg)^{\frac{4 q \gamma}{p}}}^{\frac{1}{8q}}.
\end{align*}
Combining the above estimates, we obtain that
\begin{align}
\label{april_1_1}
\begin{split}
& \en{ \bigg( \int_{\Rd^d} \bigg( \fint_{B_{\varepsilon}(x)} | \chi_{\domain \setminus B_{\frac L2}(x_0)} \nabla u_{\eps} \otimes \nabla h^{\eps}| \, \dd y  \bigg)^2 \dx \bigg)^{q} }^\frac{1}{2q}\\
  &\lesssim 
q^{C} \eps^{-1}  L^{\frac{d}{2}}  \en{ \bigg( L^{-d} \int_{\domain \setminus B_{\frac L2}(x_0)+B_\eps} |\nabla h^{\eps}|^{2p} \omega_{\gamma,  L + r^{\domain}_*(x_0)} \dx \bigg)^{\frac{2q}{p}} }^{\frac{1}{4q}} \\&~~~~~\times \en{ \bigg(1 + \frac{r^{\domain}_*(x_0)}{\varepsilon}\bigg)^{\frac{4 q \gamma}{p}}}^{\frac{1}{8q}}.
 \end{split}
\end{align}
From \eqref{Meyers1} we infer, using that $r\geq \varepsilon$,
\begin{align*}
 &\bigg( L^{-d} \int_{\domain \setminus B_{\frac{L}{2}}(x_0) + B_\eps} |\nabla h^{\eps}|^{2p} \omega_{\gamma, L + r^{\domain}_*(x_0)} \dx \bigg)^{\frac{1}{2p}}
\\
 &\lesssim_{\gamma} \bigg(\frac{r}{L}\bigg)^{\frac{d}{2p}} \bigg(\frac{r+r^{\domain}_*(x_0)}{L}\bigg)^{\frac{\gamma}{2p}} \bigg( \fint_{B_r(x_0)} |\eps g|^{2p} \dx \bigg)^{\frac{1}{2p}}\\
 & \lesssim \bigg(\frac{r}{L}\bigg)^{\frac{d + \gamma}{2p}} \bigg(1 + \frac{r^{\domain}_*(x_0)}{\varepsilon}\bigg)^{\frac{\gamma}{2p}} \bigg( \fint_{B_r(x_0)} |\eps g|^{2p} \dx \bigg)^{\frac{1}{2p}}.
\end{align*}
Plugging this into \eqref{april_1_1}, we obtain
\begin{align*}
& \en{ \bigg( \int_{\Rd^d} \bigg( \fint_{B_{\varepsilon}(x)} | \chi_{\domain \setminus B_{\frac L2}(x_0)} \nabla u_{\eps} \otimes \nabla h^{\eps}| \, \dd y  \bigg)^2 \dx \bigg)^{q} }^\frac{1}{2q}
\\&
\lesssim_{\gamma} q^{C} 
L^{\frac{d}{2}}  \bigg(\frac{r}{L}\bigg)^{\frac{d + \gamma}{2p}}
\en{ \bigg(1+ \frac{r^{\domain}_*(x_0)}{\varepsilon}\bigg)^{\frac{4q\gamma}{p}} }^{\frac{1}{4q}} \bigg( \fint_{B_r(x_0)} | g|^{2p} \dx \bigg)^{\frac{1}{2p}}.
\end{align*}
Choosing $\gamma := d(p(2  - \beta) -1 )$ and combining this inequality with \eqref{icf} and \eqref{g_avg}, we obtain \eqref{sens_M1} via the triangle inequality.
\end{proof}

To prove Proposition \ref{sens_Prop1} it remains to bound the second term on the right-hand side of \eqref{pLSI_new}. For this term we obtain:

\begin{lemma}\label{sens_Lm3}
Adopt the assumptions of Proposition \ref{sens_Prop1} and let $v^{\eps}_1$ and $v^{\eps}_2$ solve \eqref{sens_def_v1} and \eqref{sens_def_v2} respectively. Then, for $q \ge \frac{p}{2(p-1)}$, we have that
 \begin{align}
 \label{prop_5_second_piece}
 \begin{split}
& \en{  \bigg(\int_{\Rd^d}  \bigg( \fint_{B_{\varepsilon}(x)} \big|(e_i+\nabla \phi^{\eps}_i) \otimes (\nabla v^{\eps}_1 + \nabla v^{\eps}_2)\big| \dd y \bigg)^{2} \dx \bigg)^q}^{\frac{1}{2q}} \\
& \lesssim_{d,\lambda, \domain, f, \beta, p} r^{-\beta d/2} \bigg( \frac{1}{\dist(x_0,\partial\domain) + r} \bigg)^{(1-\beta)\frac{d}{2}} 
\en{\bigg(\frac{r^{\domain}_*(x_0)}{\varepsilon}+1\bigg)^{\frac{2qd(3p-2)}{p}   }  }^{\frac{1}{4q}}
\\&~~~~~~~~~~~~~~~~~~\times
 \en{ \bigg(\fint_{ B_{\varepsilon}(0)} |\nabla\phi^{\eps}_i+e_i|^2 \dx \bigg)^{2q}}^{\frac{1}{4q}},
 \end{split}
 \end{align} 
 for any $1 \gg \beta >0$. 
\end{lemma}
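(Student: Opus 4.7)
The strategy mirrors that of Lemma~\ref{sens_Lm4}, with a significant simplification coming from the stationarity of $\nabla \phi_i$, which removes the need to split the integration into interior and exterior pieces. First, I would apply the H\"older-type inequality~\eqref{sens_eqaux} with $\psi = |\nabla v_1 + \nabla v_2|$, $\varphi = |\nabla \phi_i + e_i|$, and the weight $\omega = \omega_{\gamma, r+r^{\domain}_*(x_0)}$ for a parameter $\gamma > d(p-1)$ to be chosen later. Raising to the $q$-th power, taking expectation, and splitting via Cauchy--Schwarz in probability then yields a product of a ``$v$-factor'' $\en{(\int |\nabla v_1 + \nabla v_2|^{2p} \omega \dd x)^{2q/p}}^{1/(4q)}$ and a ``$\phi$-factor'' $\en{(\int \omega^{-1/(p-1)} (\fint_{B_\varepsilon(x)} |\nabla \phi_i + e_i|^2 \dd y)^{p/(p-1)} \dd x)^{2q(p-1)/p}}^{1/(4q)}$.

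For the $v$-factor, I would use the pointwise comparison $\omega_{\gamma, r+r^{\domain}_*(x_0)}(x) \leq (\tfrac{\dist(x_0,\partial\domain)}{r+r^{\domain}_*(x_0)}+1)^{\gamma} \omega_{\gamma, R}(x)$ with $R := \dist(x_0,\partial\domain)+r+r^{\domain}_*(x_0)$, then invoke Corollary~\ref{sens_Cor2} with $\alpha_0 = \gamma$ and $\alpha_1$ just below $d(2p-1)$, together with the normalisation $|g|\leq r^{-d}$ on $B_r(x_0)$. This produces a deterministic pointwise bound of the form $(\int |\nabla v_1 + \nabla v_2|^{2p}\omega \dd x)^{1/(2p)} \lesssim (\tfrac{\dist(x_0,\partial\domain)}{r+r^{\domain}_*(x_0)}+1)^{(\gamma-\alpha_1)/(2p)} r^{-d(2p-1)/(2p)}$. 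The elementary inequality $\tfrac{r+r^{\domain}_*(x_0)}{\dist(x_0,\partial\domain)+r+r^{\domain}_*(x_0)} \leq (1+r^{\domain}_*(x_0)/\varepsilon)\tfrac{r}{\dist(x_0,\partial\domain)+r}$ (valid for $r \geq \varepsilon$) then converts the $\dist$-dependent prefactor into the desired $r^{-\beta d/2}(\dist(x_0,\partial\domain)+r)^{-(1-\beta)d/2}$ shape, leaving a residual $(1+r^{\domain}_*(x_0)/\varepsilon)$-power to be absorbed into the outer expectation.

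For the $\phi$-factor, I would replace the random weight by a deterministic one via the pointwise estimate $\omega_{\gamma, r+r^{\domain}_*(x_0)}^{-1/(p-1)}(x) \leq (1+r^{\domain}_*(x_0)/\varepsilon)^{\gamma/(p-1)} \omega_{\gamma, r}^{-1/(p-1)}(x)$, separate the $r^{\domain}_*$-prefactor by a H\"older split in probability, and apply Minkowski's integral inequality on the deterministic-weight piece in order to interchange the outer expectation with the spatial integration. Stationarity of $\nabla \phi_i + e_i$ then collapses the pointwise moments $\en{(\fint_{B_\varepsilon(x)}|\nabla \phi_i + e_i|^2 \dd y)^{k}}^{1/k}$ to $x$-independent quantities, and the remaining spatial integral $\int \omega_{\gamma, r}^{-1/(p-1)} \dd x \sim r^{d}$ is finite precisely because $\gamma > d(p-1)$. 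The hypothesis $q \geq p/(2(p-1))$ enters here to ensure Minkowski's inequality is applied with an exponent $\geq 1$.

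Combining the two factors and choosing $\alpha_1 := d(2p-1) - \text{(tiny)}$ together with $\gamma := \alpha_1 - dp(1-\beta) = d(p-1 + \beta p) - \text{(tiny)}$ -- a choice compatible with the constraints $d(p-1) < \gamma < \alpha_1 - 2p$, the latter forcing $(1-\beta)d > 2$, hence $d \geq 3$ and $\beta$ sufficiently small -- a direct accounting of the $r$- and $\dist$-exponents yields the claimed $r^{-\beta d/2}(\dist(x_0,\partial\domain)+r)^{-(1-\beta)d/2}$ decay. The main technical obstacle is the bookkeeping required to route the two random quantities $r^{\domain}_*(x_0)$ and $\nabla \phi_i$ through the H\"older / Minkowski splittings so that the two separate contributions to the $(1+r^{\domain}_*(x_0)/\varepsilon)$-moment (one from the $v$-factor and one from the $\phi$-factor) can be dominated, by monotonicity of $L^p$-norms on probability space, by the single combined power $\en{(1+r^{\domain}_*(x_0)/\varepsilon)^{2qd(3p-2)/(p-1)}}^{1/(4q)}$ that appears in the target bound.
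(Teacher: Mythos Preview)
Your proposal is correct and follows essentially the paper's argument, with the same ingredients (Lemma~\ref{sens_Lmaux}, Corollary~\ref{sens_Cor2}, stationarity of $\nabla\phi_i$) and the same parameter choices (your $\gamma$ is the paper's $\alpha_0 = d(p-1)+\beta dp$, with $\alpha_1 = d(2p-1)-\beta dp$). The only cosmetic differences are that the paper centres the weight at scale $R = \dist(x_0,\partial\domain)+r+r^{\domain}_*(x_0)$ rather than $r+r^{\domain}_*(x_0)$, keeps the $v$- and $\phi$-factors together inside one expectation (applying Corollary~\ref{sens_Cor2} pointwise before a single H\"older split) rather than first separating them by Cauchy--Schwarz in probability, and uses Jensen's inequality with the normalised weight as probability measure in place of your Minkowski step.
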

\noindent For the proof of this lemma we again use the H\"older-like estimate \eqref{sens_eqaux} from Lemma \ref{sens_Lmaux}, now in conjunction with Corollary \ref{sens_Cor2}:
\begin{proof}
Throughout this argument we assume that $p$, $\alpha_0$, and $\alpha_1$ are subject to the assumptions in Corollary \ref{sens_Cor2}, \ie  that $d(p-1)< \alpha_0 < \alpha_1 - 2p < \alpha_1 < d(2p-1)$. Furthermore, we use the convention $R:= \dist(x_0,\partial\domain) + r + r^{\domain}_*(x_0)$ and $L:=\dist(x_0,\partial\domain)$.
 
Using~\eqref{sens_eqaux} with $\psi=|\nabla(v^{\eps}_1 + v^{\eps}_2)|$, $\varphi = |e_i+\nabla\phi^{\eps}_i|$, and $\omega = \omega_{\alpha_0, R}$ we get 
\begin{align*}
 & \int_{\Rd^d} \bigg( \fint_{B_{\varepsilon}(x)} |(e_i+\nabla\phi^{\eps}_i) \otimes (\nabla v^{\eps}_1 + \nabla v^{\eps}_2)| \, \dd y \bigg)^2 \dx \\
  & \le 
  \bigg( \int_{\Rd^d} |\nabla (v^{\eps}_1 + v^{\eps}_2)|^{2p}\omega_{\alpha_0, R} \dx  \bigg)^{\frac{1}{p}} \bigg( \int_{\Rd^d} \omega_{\alpha_0, R}^{-\frac{1}{p-1}} \bigg(\fint_{B_{\varepsilon}(x)} |e_i+\nabla\phi^{\eps}_i|^2 \, \dd y  \bigg)^\frac{p}{p-1} \dx \bigg)^{\frac{p-1}{p}}.
 \end{align*}
 %
Taking the $q$-th moment of both sides, we infer by~\eqref{sens_v1v2est} and  \eqref{g_avg} that
\begin{align*}
& \en{\bigg( \int_{\Rd^d} \bigg( \fint_{B_{\varepsilon}(x)} |(e_i+\nabla\phi^{\eps}_i) \otimes (\nabla v^{\eps}_1 + \nabla v^{\eps}_2)| \, \dd y \bigg)^2 \dx  \bigg)^q }^{1/2q}
\\&
\lesssim_{\alpha_0, \alpha_1} 
\en{
\bigg( \frac{ L }{ r+r^{\domain}_*(x_0)} + 1 \bigg)^{-\frac{q \alpha_1}{p}}
\bigg(\int_{\Rd^d} \omega_{\alpha_0, R}^{-\frac{1}{p-1}} \bigg( \fint_{B_{\varepsilon}(x)} |e_i + \nabla\phi_i^\eps|^2 \, \dd y \bigg)^{\frac{p}{p-1}} \dx \bigg)^{\frac{q(p-1)}{p}}
}^{1/2q}
\\&~~~~~~~~~\times r^{\frac{d(1-2 p)}{2p}}.
\end{align*}
Taking into consideration the following two bounds
\begin{align*}
 \bigg( \frac{L}{r+r^{\domain}_*(x_0)} + 1 \bigg)^{-1} &\le  \bigg( \frac{L}{r}+1 \bigg)^{-1}  \bigg( \frac{r^{\domain}_*(x_0)}{\varepsilon} + 1 \bigg),
\\
 \bigg( \frac{|x-x_0|}{R} + 1 \bigg)^{-1} &\le \bigg(\frac{|x-x_0|}{L + r }+1 \bigg)^{-1} \bigg( \frac{r^{\domain}_*(x_0)}{\varepsilon} + 1 \bigg),
\end{align*}
both being a consequence of $r \geq \varepsilon$, and then applying H\"older's inequality we obtain 
\begin{align*}
\begin{split}
&\en{\bigg( \int_{\Rd^d} \bigg( \fint_{B_{\varepsilon}(x)} |(e_i+\nabla\phi_i^\eps) \otimes (\nabla v_1^\eps + \nabla v_2^\eps)| \, \dd y \bigg)^2 \dx  \bigg)^q }^{\frac{1}{2q}}\\
&
\lesssim_{\alpha_0, \alpha_1} r^{\frac{d (1- 2p)}{2p}} (L + r )^{\frac{d(p-1)}{2p}}
\bigg( \frac{L}{r}+1\bigg)^{-\frac{\alpha_1}{2p}}
\en{\bigg(\frac{r^{\domain}_*(x_0)}{\varepsilon}+1\bigg)^{\frac{2q(\alpha_0 + \alpha_1)}{p} }  }^{\frac{1}{4q}}
\\&\ \ \ \times
\en{\bigg( (L + r)^{-d}\int_{\Rd^d}\bigg( \frac{|x-x_0|}{L + r}+1 \bigg)^{-\frac{\alpha_0}{p-1}} \bigg( \fint_{B_{\varepsilon(x)} }|\nabla\phi_i^\eps+e_i|^2 \, \dd y \bigg)^{\frac{p}{p-1}} \dx \bigg)^{\frac{2q (p-1)}{p}}}^{\frac{1}{4q}}.
\end{split}
\end{align*}
Next we apply Jensen's inequality (using that $\alpha_0> d(p-1)$ and $\frac{2q(p-1)}{p}>1$), to the extent of  
\begin{align*}
\begin{split}
&\en{\bigg( \int_{\Rd^d} \bigg( \fint_{B_{\varepsilon}(x)} |(e_i+\nabla\phi_i^\eps) \otimes (\nabla v_1^\eps + \nabla v_2^\eps)| \, \dd y \bigg)^2 \dx  \bigg)^q }^{\frac{1}{2q}}\\
&
\lesssim_{\alpha_0, \alpha_1} r^{\frac{d (1- 2p)}{2p}} (L + r)^{\frac{d(p-1)}{2p}}
\bigg( \frac{L}{r}+1\bigg)^{-\frac{\alpha_1}{2p}}
\en{\bigg(\frac{r^{\domain}_*(x_0)}{\varepsilon}+1\bigg)^{\frac{2q(\alpha_0 + \alpha_1)}{p}   }  }^{\frac{1}{4q}}
\\&\ \ \ \times
\en{(L+ r)^{-d}\int_{\Rd^d}\bigg( \frac{|x-x_0|}{L+r}+1 \bigg)^{-\frac{\alpha_0}{p-1}} \bigg( \fint_{B_{\varepsilon(x)} }|\nabla\phi_i^\eps+e_i|^2 \, \dd y \bigg)^{2q} \dx \, }^{\frac{1}{4q}}.
\end{split}
\end{align*}
Using that, by the stationarity of $\nabla \phi_i^\eps$, 
\begin{align*}
\en{ \bigg(\fint_{B_{\varepsilon} (x)} |\nabla\phi_i^\eps+e_i|^2  \dd y \bigg)^{2q} }
= \en{ \bigg(\fint_{ B_{\varepsilon}(0)} |\nabla\phi_i^\eps+e_i|^2 \dx \bigg)^{2q}} 
\end{align*}
for any $x \in \Rd^d$, we find by setting  $\alpha_0 := d(p-1) +  \beta d p$ and  $\alpha_1 := d(2p-1) -  \beta d p$ that
\begin{align*}
&\en{\bigg( \int_{\Rd^d} \bigg( \fint_{B_{\varepsilon}(x)} |(e_i+\nabla\phi_i^\eps) \otimes (\nabla v_1^\eps + \nabla v_2^\eps)| \, \dd y \bigg)^2 \dx  \bigg)^q }^{\frac{1}{2q}}
\\&
\lesssim_{\alpha_0, \alpha_1}
r^{-\beta d/2}
\bigg( \frac{1}{L + r} \bigg)^{(1-\beta)\frac{d}{2}} 
\en{\bigg(\frac{r^{\domain}_*(x_0)}{\varepsilon}+1\bigg)^{\frac{2qd(3p-2)}{p}   }  }^{\frac{1}{4q}}
\\&~~~~~~~~~~~\times
 \en{ \bigg(\fint_{ B_{\varepsilon}(0)} |\nabla\phi_i^\eps+e_i|^2 \dx \bigg)^{2q}}^{\frac{1}{4q}}
\end{align*}
for $ 0 < \beta < \frac{1}{2}$. Note that to satisfy the condition $\alpha_0 < \alpha_1 - 2p$ one needs $d > 2$.
\end{proof}

With Lemmas~\ref{sens_Lm4} and \ref{sens_Lm3} in-hand, we are now ready to prove Proposition \ref{sens_Prop1}:

\begin{proof}[Proof of Proposition \ref{sens_Prop1}] 
Combining \eqref{pLSI_new} with \eqref{sens_M1} of Lemma \ref{sens_Lm4} and \eqref{prop_5_second_piece} of Lemma \ref{sens_Lm3}, we obtain
\begin{align*}
& \en{ (F - \en{F})^{2q}}^{\frac{1}{2q}} \\
& \lesssim q^{\frac{1}{2}} \varepsilon^{\frac{d}{2}}  \en{ \bigg(  \int_{\Rd^d}  \bigg( \fint_{B_{\varepsilon}(x)} \bigg| \frac{\partial F}{ \partial a}  \bigg| \dd y \bigg)^{2}  \dx \bigg)^q  }^{\frac{1}{2q}}\\
& \lesssim_{d, \lambda, \domain, f, \beta, p, f}  q^{C(d, \lambda, \domain)}
\bigg( 
\Big(\frac{\eps}{r}\Big)^{\beta d/2}
 \bigg( \frac{\varepsilon}{\dist(x_0,\partial\domain)}\bigg)^{(1 - \beta) \frac d2}
+
 \frac{\varepsilon}{\dist(x_0,\partial\domain)}  \Big(\frac{\eps}{r}\Big)^{\frac{d}{2}}
\bigg)\\
& \hspace{6cm} \times
\en{ \bigg( \frac{r^*_{\domain}(x_0)}{\varepsilon} + 1 \bigg)^{\frac{4qd(p( 2- \beta) - 1)}{p}}}^{\frac{1}{4q}}
\\&~~~~~~
+q^{C(d,\lambda,\domain)} \Big(\frac{\eps}{r}\Big)^{\beta d/2} \bigg( \frac{\varepsilon}{\dist(x_0,\partial\domain) } \bigg)^{(1-\beta)\frac{d}{2}}\\
& \hspace{2cm} \times
\en{\bigg(\frac{r^*_{\domain}(x_0)}{\varepsilon}+1\bigg)^{\frac{2qd(3p-2)}{p}   }  }^{\frac{1}{4q}}
 \en{ \bigg(\fint_{ B_{\varepsilon}(0)} |\nabla\phi_i^\eps+e_i|^2 \dx \bigg)^{2q}}^{\frac{1}{4q}}.
\end{align*}
Using that $r^*_\domain$ has stretched exponential moments (see \cite[Theorem 1 \& Theorem 2]{GNO_final} for $r^*_{\R^d}$, which by \cite{FischerRaithel} entails the corresponding result for $r_{\Hd}^*$; the case of bounded $C^{1,1}$-domains is treated in \cite[Theorem~3]{BFJR_1}) concludes the proof.
\end{proof}



\subsection{Proofs of auxiliary lemmas for Proposition \ref{sens_Prop1}}
\label{aux_lems_fluc}

 We begin with the proof of Lemma \ref{sens_Lmaux}, which is an easy consequence of the H\"older inequality.
 
\begin{proof}[Proof of Lemma \ref{sens_Lmaux}]
We apply H\"{o}lder's inequality twice:
\begin{align*}
&\int_{\Rd^d}\bigg(\fint_{B_{\varepsilon}(x)} \varphi\psi \, \dd y  \bigg)^2 \dx \, 
\leq  \int_{\Rd^d} \bigg(\fint_{B_{\varepsilon}(x)} |\varphi|^2 \omega^{-\frac{1}{p}} \, \dd y  \bigg) \bigg(\fint_{B_{\varepsilon}(x)} |\psi|^2 \omega^{\frac{1}{p}} \, \dd y \bigg) \, \dx
\\&
\leq \bigg( \int_{\Rd^d} \bigg(\fint_{B_{\varepsilon}(x)} |\varphi|^2 \omega^{-\frac{1}{p}} \, \dd y \bigg)^{\frac{p}{p-1}} \dx \bigg)^{\frac{p-1}{p}}    \bigg( \int_{\Rd^d}  \bigg(\fint_{B_{\varepsilon}(x)} |\psi|^2 \omega^{\frac{1}{p}} \, \dd y \bigg)^{p}  \dx \bigg)^{\frac{1}{p}}.
\end{align*}
The claim then follows from Jensen's inequality.
\end{proof}

We now prove Lemma \ref{sens_LmEstu}, for which we use a gradient estimate from \cite[Corollary~5]{BFJR_1}.
\begin{lemma}[{\cite[Corollary~5]{BFJR_1}}]
Let the assumptions (A1) -- (A4) be satisfied and $\domain$ be a bounded $C^{1,1}$ domain or $\Hd$, and $u \in H^1_{\loc}(\domain)$ a weak solution of
\begin{equation*}
\begin{aligned}
-\nabla \cdot (a\nabla u) &= \nabla \cdot g + f
&&\text{in }\domain,
\\
u&=0,
&&\text{on }\partial\domain
\end{aligned}
\end{equation*}
where for $\domain = \Hd$ the homogeneous Dirichlet data is replaced by a decay condition in the far-field. Then there exists a random field $\mathcal{C}(a,x)$ with stretched exponential moments in the sense of \eqref{stretched} such that
\begin{align}
\label{BoundSolutionRandomOperator}
|\nabla u(x_0)| 
\leq&
\mathcal{C}(a,x_0)
\int_{\domain} \mathcal{C}(a,x) \frac{|g| + |f| \dist(x,\partial\domain)}{| x_0- x|^d} \dx
\end{align}
holds for all $x_0 \in \domain$.
\end{lemma}

\begin{proof}[Proof of Lemma \ref{sens_LmEstu}]
We start with the case when $x_0$ is not close to the boundary, by assuming $\dist(x_0,\partial\domain) \geq 3\varepsilon$. Let $\eta$ be a smooth cut-off function for $\partial\domain$ in its $\varepsilon$-neighborhood, \ie a smooth function such that $\eta = 1$ on $\partial\domain$, $\eta(x) = 0$ if $\dist(x,\partial\domain) \ge \varepsilon$, and $|\nabla \eta| \le C(\domain)\varepsilon^{-1}$. Then, letting $w^{\eps} := u_{\eps} - \eta f\phi_i\big(\tfrac{\cdot}{\eps}\big) $, we find that  
 $-\nabla \cdot (a\big(\tfrac{\cdot}{\eps}\big) \nabla w^{\eps}) = \nabla \cdot (a\big(\tfrac{\cdot}{\eps}\big) \nabla (\eta f \phi_i\big(\tfrac{\cdot}{\eps}\big)))$ in $\domain$ and $w^{\eps} = 0$ on $\partial\domain$. 
By \eqref{BoundSolutionRandomOperator}, using the boundedness of $a\big(\tfrac{\cdot}{\eps}\big)$ and the definition of $\eta$ (which implies $u=w$ in a neighborhood of $x_0$) we get
 \begin{equation}
  |\nabla u_{\eps} (x_0)| \le \mathcal{C}(a,x_0) \int_{\domain} \mathcal{C}(a,x) \frac{|\nabla(\eta f \phi_i\big(\tfrac{\cdot}{\eps}\big))|  }{|x-x_0|^d} \dx.
 \end{equation}
 Using the notation $\domain_\varepsilon := \{ x \in \domain : \dist(x,\partial\domain) < \varepsilon\}$, we obtain using H\"older's inequality 
\begin{equation}\nonumber
\en{ |\nabla u_{\eps} (x_0)|^q }^{\frac 1q} \lesssim 
q^{C}
\en{ \bigg( \int_{\domain_\varepsilon} \mathcal{C}(a,x) \frac{|\nabla(\eta f \phi_i\big(\tfrac{\cdot}{\eps}\big))| }{|x-x_0|^d} \dx \bigg)^{2q} }^{\frac{1}{2q}}.
\end{equation}
By the corrector bounds from \cite[Theorem 2]{GNO5} and the Caccioppoli inequality we know that
\begin{align}
\label{BoundGradientCorrector}
\fint_{B_\eps(y_0)} |\nabla \phi_i\big(\tfrac{\cdot}{\eps}\big)|^2 \dx
\leq \mathcal{C}(a,y_0) .
\end{align}
Covering $\domain_\varepsilon$ by balls of radius $\eps$ (of which at most $C(d)$ cover any given point) and using $|\nabla(\eta f \phi_i\big(\tfrac{\cdot}{\eps}\big))|\lesssim \eps^{-1} |\nabla \phi_i\big(\tfrac{\cdot}{\eps}\big)| + \eps^{-1} |\phi_i\big(\tfrac{\cdot}{\eps}\big)|$, we deduce
\begin{equation}
\nonumber
\en{ |\nabla u_{\eps} (x_0)|^q }^{\frac 1q} \lesssim 
 \frac{q^{C} }{\eps+\dist(x_0,\partial\domain)}.
\end{equation}
It remains to prove the result in case $\dist(x_0,\partial\domain) \le 3\varepsilon$. Consider again $w^{\eps}:=u_{\eps} - \eta f\phi_i\big(\tfrac{\cdot}{\eps}\big)$. Since in this case $x_0$ is close to (or even contained in) the support of $\nabla(\eta f \phi_i\big(\tfrac{\cdot}{\eps}\big))$, we now define $w^{\eps}_0$ as the solution of $-\nabla \cdot (a^{\eps} \nabla w^{\eps}_0) = \nabla \cdot (a^{\eps} \nabla(\eta f \phi_i\big(\tfrac{\cdot}{\eps}\big))\chi_{B_{3\varepsilon}(x_0)}))$ with $w^{\eps}_0 = 0$ on $\partial\domain$, and $w^{\eps}_1 := w^{\eps} - w^{\eps}_0$. Combining the energy estimate for $w_0$ with~\eqref{BoundGradientCorrector} we get that 
\begin{align*}
\en{\bigg( \fint_{B_{\varepsilon}(x_0) \cap \domain} |\nabla w^{\eps}_0|^2 \dx \bigg)^q }^{\frac{1}{2q}} \lesssim q^C \eps^{-1}, 
\end{align*}
which by the Schauder estimate (see, \eg, \cite[Theorem~3]{BFJR_1}) and the regularity of the corrector $\phi$ contained in Lemma~\ref{CorrectorRegularity} implies that $\left< |\nabla w^{\eps}_0(x_0)|^q \right>^{\frac{1}{q}} \lesssim q^C \eps^{-1}$. Since $x_0$ is $3\varepsilon$ away from the support of $\chi_{B_{3\varepsilon}^c(x_0)}$, an argument similar to the first case gives that $\left< |\nabla w_1(x_0)|^q \right>^{\frac{1}{q}} \lesssim q^C \eps^{-1}$. Combining the two previous estimates then yields $\left< |\nabla w^{\eps}(x_0)|^q \right>^{\frac{1}{q}} \lesssim q^C \eps^{-1}$. To complete the argument we then make use of \cite[Theorem 1 \& Theorem 2]{GNO_final} and Lemma~\ref{CorrectorRegularity}.

\end{proof}

\section{Argument for Proposition \ref{Systematic_error}: Bound for expectation of weighted average of the boundary layer}

\label{expectation}

We next prove the estimate on $\mathbb{E}[\nabla \theta_i]$ in case $\domain=\Hd$.
To this aim we rely critically on the stationarity of the boundary layer corrector (and also the whole-space corrector) with respect to shifts tangential to the boundary. 

\newcommand{\Sym}{\mathfrak{S}}

\subsection{Proof of Proposition \ref{Systematic_error}}

We remark that throughout this section we are once again only on the half-space $\Hd$ and may, therefore, \wolog set $\eps=1$.

For our proof of Proposition \ref{Systematic_error} we first express the integral on the left-hand side of \eqref{M:Estim:01} in terms of $\bar h\in \dot H^1(\Hd)$, which is the decaying solution of 
\begin{equation}
\label{Definitionh_hom}
\begin{aligned}
-\nabla \cdot (\bar{a}^*\nabla \bar{h}) &= \nabla \cdot g && \text{in }\Hd,
\\
\bar{h}& =  0&&\text{on }\partial\Hd.
\end{aligned}
\end{equation} 
Notice $\bar{h}$ is the homogenized solution corresponding to the heterogeneous solution $h$ solving $-\nabla \cdot (a^\transpose \nabla h) = \nabla \cdot g$. We, in particular, show the following identity:

\begin{lemma}\label{M:Lem:1} Adopting the assumptions of Proposition \ref{Systematic_error} and letting $\bar{h}$ solve \eqref{Definitionh_hom}, we have that 
	\begin{equation}
	\label{M:Id:01}
	\left\langle\int_{\Hd} \nabla \theta_i \cdot g \dx \right\rangle
	=
	\int_{\big[x_0^{\perp} - 2r, x_0^{\perp} + 2r\big]  \times \Rd^{d-1}} \en{\nabla\theta_{i}\cdot \bigg(\phi_j^* a^\transpose-\sigma_j^*\bigg)}\nabla\partial_j \bar{h} \dx,
	\end{equation}
for $j = 1, \ldots, d$. 
\end{lemma}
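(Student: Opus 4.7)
The idea is to express $\langle F\rangle:=\left\langle\int_{\Hd}\nabla\theta_i\cdot g\,\dx\right\rangle$ by duality with the homogenized ``Green's function'' $\bar h$ defined in \eqref{Definitionh_hom}, and then use the adjoint corrector--flux pair $(\phi_j^*,\sigma_j^*)$ to shift all remaining derivatives onto $\bar h$ via a two-scale integration by parts.

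First, testing \eqref{Definitionh_hom} against $\theta_i$ and integrating by parts twice---using that $g$ has compact support in $\Hd$, that $\bar h|_{\partial\Hd}=0$, that $\theta_i|_{\partial\Hd}=\phi_i$, and the $a$-harmonicity $\nabla\cdot(a\nabla\theta_i)=0$ in $\Hd$---gives the duality identity
\begin{equation*}
\int_{\Hd}\nabla\theta_i\cdot g\,\dx=-\int_{\Hd}\nabla\theta_i\cdot\bar a^{\transpose}\nabla\bar h\,\dx+\int_{\partial\Hd}\phi_i\,\bar a^{\transpose}\nabla\bar h\cdot\vec n\,\ud S.
\end{equation*}
Then, substituting the adjoint corrector--flux identity $\bar a^{\transpose}e_j=a^{\transpose}(e_j+\nabla\phi_j^*)-\nabla\cdot\sigma_j^*$ (with $\sigma_j^*$ skew in its two free indices and $\langle\phi_j^*\rangle=\langle\sigma_j^*\rangle=0$) in the bulk term, for the $a^{\transpose}(e_j+\nabla\phi_j^*)\partial_j\bar h$ piece I would write $\partial_j\bar h\,\nabla\phi_j^*=\nabla(\phi_j^*\partial_j\bar h)-\phi_j^*\nabla\partial_j\bar h$, absorb $\nabla(\phi_j^*\partial_j\bar h)$ against $a\nabla\theta_i$ via $\nabla\cdot(a\nabla\theta_i)=0$ (which produces a pure boundary integral), and eliminate the residual $e_j\partial_j\bar h$ contribution by $\bar h|_{\partial\Hd}=0$; this leaves the desired bulk term $\int_{\Hd}\nabla\theta_i\cdot\phi_j^*a^{\transpose}\nabla\partial_j\bar h\,\dx$. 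For the $\nabla\cdot\sigma_j^*$ piece, one integration by parts yields $-\int_{\Hd}\nabla\theta_i\cdot\sigma_j^*\nabla\partial_j\bar h\,\dx$ together with a boundary term; the other volume residue $\partial_l\partial_m\theta_i\,(\sigma_j^*)_{lm}$ vanishes by pairing the Hessian's symmetry with the skew-symmetry of $\sigma_j^*$. Collecting produces
\begin{equation*}
\int_{\Hd}\nabla\theta_i\cdot g\,\dx=\int_{\Hd}\nabla\theta_i\cdot(\phi_j^*a^{\transpose}-\sigma_j^*)\nabla\partial_j\bar h\,\dx+B,
\end{equation*}
with $B$ a sum of boundary integrals on $\partial\Hd$.

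It remains to pass to the expectation, show $\langle B\rangle=0$, and localize the bulk integral to the slab. Since $\bar h$ is deterministic it factors out; the anchoring $\langle\phi_i\rangle=0$ kills the leading boundary term. Because $\bar h\equiv 0$ on $\partial\Hd$, tangential derivatives of $\bar h$ vanish there, so only the $j=1$ components survive in the remaining boundary contributions, and by tangential stationarity of $\theta_i$, $\phi_j^*$, $\sigma_j^*$ the corresponding integrands are constant in $x^{\parallel}$; this reduces $\langle B\rangle$ to a single constant times $\int_{\Rd^{d-1}}\partial_1\bar h(0,x^{\parallel})\,\dd x^{\parallel}$. For the volume integral, tangential stationarity forces $\langle\nabla\theta_i\cdot(\phi_j^*a^{\transpose}-\sigma_j^*)\rangle$ to depend only on $x^{\perp}$; after averaging $\nabla\partial_j\bar h$ in $x^{\parallel}$, only the $(j,k)=(1,1)$ component survives, and it equals $H''(x_1)$ with $H(x_1):=\int_{\Rd^{d-1}}\bar h(x_1,\cdot)\,\dd x^{\parallel}$. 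The tangentially averaged version of \eqref{Definitionh_hom} reads $(\bar a^{\transpose})_{11}H''(x_1)=-G_1'(x_1)$ with $G_1(x_1):=\int_{\Rd^{d-1}}g_1(x_1,\cdot)\,\dd x^{\parallel}$, so $H''$ is supported inside the vertical support of $g$, which lies in $[x_0^{\perp}-r,x_0^{\perp}+r]\subset[x_0^{\perp}-2r,x_0^{\perp}+2r]$; the part of the bulk integral outside the slab therefore vanishes in expectation.

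The main obstacle I anticipate is showing that the residual constant governing $\langle B\rangle$ vanishes: tangential stationarity alone only pins it down to a single number times $\int_{\partial\Hd}\partial_1\bar h\,\ud S$, which need not be zero on its own. One must combine the trace identity $\theta_i|_{\partial\Hd}=\phi_i$ with the defining equations of $\theta_i$ and of the adjoint correctors $(\phi_j^*,\sigma_j^*)$---essentially expressing this constant via a stationary-in-$x^{\parallel}$ average of a divergence whose $x^{\perp}$-antiderivative is controlled by the decay of $\nabla\theta_i$---to conclude the required cancellation.
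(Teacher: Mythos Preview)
Your decomposition is essentially the paper's: a two-scale expansion with the adjoint correctors $(\phi_j^*,\sigma_j^*)$ produces the bulk term $\int_{\Hd}\nabla\theta_i\cdot(\phi_j^*a^\transpose-\sigma_j^*)\nabla\partial_j\bar h\,\dx$ plus boundary integrals, and tangential stationarity reduces both the boundary terms and the off-slab bulk to constants times hyperplane integrals of $\nabla^k\bar h$. Your ODE argument for localizing the bulk to the slab is correct and is a clean variant of what the paper does.

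The ``main obstacle'' you flag is not an obstacle, and its resolution is a property of the \emph{deterministic} function $\bar h$, not of the random objects. The quantity you need to kill, $\int_{\partial\Hd}\partial_1\bar h\,\ud S=H'(0)$, vanishes by your own ODE: integrating $(\bar a^\transpose)_{11}H''=-G_1'$ from $0$ to $\infty$ and using $H'(\infty)=0$ (from $|\nabla\bar h(x)|\lesssim|x-x_0|^{-d}$) together with $G_1(0)=G_1(\infty)=0$ (compact support of $g$ in the interior of $\Hd$) gives $H'(0)=0$. The paper does the same thing as a flux-balance argument: integrate $-\nabla\cdot(\bar a^\transpose\nabla\bar h)=\nabla\cdot g$ over $[0,L]\times[-L,L]^{d-1}$, let $L\to\infty$, and combine $\int_{\partial\Hd}e_1\cdot\bar a^\transpose\nabla\bar h\,\ud S=0$ with the tangential identities $\int_{\partial\Hd}\partial_k\bar h\,\ud S=0$ for $k\neq 1$. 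Note also that your boundary remainder is not just first order: the $\sigma_j^*$ integration by parts leaves a term of the type $\int_{\partial\Hd}\phi_i\,e_1\cdot\sigma_j^*\nabla\partial_j\bar h\,\ud S$, which after tangential stationarity reduces to a constant times $\int_{\partial\Hd}\partial_1^2\bar h\,\ud S=H''(0)$; but this is again zero by your ODE since $G_1'(0)=0$. So drop the speculation about extracting the cancellation from the decay of $\nabla\theta_i$---everything comes from the equation for $\bar h$ and the support condition on $g$.
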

\noindent We prove Lemma \ref{M:Lem:1} in Section \ref{exp_aux} below and first show how it entails Proposition~\ref{Systematic_error}.

\begin{proof}[Proof of Proposition \ref{Systematic_error}] Recall that w.\,l.\,o.\,g.\ we have set $\eps=1$. Using Lemma \ref{M:Lem:1} and the H\"older estimate we obtain that
	\begin{align}
	\label{M:Num:014}
	\left|\left\langle\int_{\Hd} \nabla \theta_{i} \cdot g \dx \right\rangle\right| \leq
	\int_{\big[x_0^{\perp} - 2r, x_0^{\perp} + 2r\big]  \times \mathbb{R}^{d-1}} \left\langle|\nabla\theta_{i}|^2 \right\rangle^{\frac{1}{2}} \left\langle \left|\phi_j^* a^\transpose-\sigma_j^*\right|^2\right\rangle^{\frac{1}{2}} |\nabla\partial_j \bar{h}| \dx.
	\end{align}
	Using our assumption \eqref{M:Hypo:1} on the decay of $|\nabla \theta_i|$ as well as the corrector bounds from \eqref{GNO_bound_balls}, we then obtain 
	\begin{equation}\label{M:Num:020}
	\begin{aligned}
	\left|\left\langle\int_{\Hd} \nabla \theta_{i} \cdot g \dx \right\rangle\right|
	&\lesssim \big( 1+ x_0^{\perp}\big)^{-n} \bigg(\int_{\big[x_0^{\perp} - 2r, x_0^{\perp} + 2r\big]  \times Q_{4r}(x_0^{\parallel}) }  |\nabla\partial_j \bar{h}| \dx
	\\&~~~~~~~~~~~~~~~~~~~~~~~~~
	+\int_{\big[x_0^{\perp} - 2r, x_0^{\perp} + 2r\big]  \times \big(\mathbb{R}^{d-1}\backslash Q_{4r}(x_0^{\parallel})\big)}  |\nabla\partial_j \bar{h}| \dx \bigg)
	\end{aligned}
	\end{equation}
	where $Q_{4r}(x_0^{\parallel})$ is the $d-1$-dimensional box in $\partial \Hd$ of side length $4r$ centered at the point $x_0^{\parallel}$. 
	
	Treating the first term on the right-hand side of \eqref{M:Num:020} is a simple matter of using the H\"older inequality and the energy estimate applied to a differentiated form of~\eqref{Definitionh_hom}:
\begin{align}\label{M:Num:022}
\int_{\big[x_0^{\perp} - 2r, x_0^{\perp} + 2r\big]  \times Q_{4r}(x_0^{\parallel})  } |\nabla\partial_j \bar{h}| \dx
&\lesssim
r^{\frac{d}{2}} \bigg(\int_{\big[x_0^{\perp} - 2r, x_0^{\perp} + 2r\big]  \times Q_{4r}(x_0^{\parallel})  } |\nabla\partial_j \bar{h}|^2 \dx\bigg)^{\frac{1}{2}} 
\\& \nonumber
\lesssim 
r^{\frac{d}{2}} \bigg(\int_{\mathbb{R}^d} |\nabla g|^2 \dx\bigg)^{\frac{1}{2}}
\stackrel{\eqref{g_avg_2}}{\lesssim} r^{-1}.
\end{align}
In order to handle the second term the right-hand side of \eqref{M:Num:020} we first consider the case that $\bar a^\transpose$ is a multiple of the identity matrix. We notice that, denoting by $\bar{G}^*$ the Green's function associated with the operator $-\nabla \cdot \bar{a}^* \nabla$ and letting $\Sym$ be the matrix associated with the reflection with respect to $\partial \Hd$, \ie 
	\begin{equation*}
	\Sym=-e_1 \otimes e_1 + \sum_{j=2}^{d} e_j \otimes e_j,
	\end{equation*}
we can write 
	\begin{equation}
	\begin{split}
	\label{M:Def_hbar3}
	\bar{h}(x):=&
	\int_{\Hd} \bigg(\bar{G}^*(x-y) - \bar{G}^*(x-\Sym y) \bigg) \nabla \cdot g(y) {\rm{ d}} y
	\\
	=&
	\int_{\Hd} \bigg(\nabla \bar{G}^*(x-y) - \Sym \nabla \bar{G}^*(x-\Sym y)\bigg)\cdot g(y) {\rm{ d}} y.
	\end{split}
	\end{equation}
Using this in combination with the classical estimates on the homogenized Green function $\bar{G}^*$ and its derivatives yields the following bound for $x \notin \supp(g)$ and $k \in \mathbb{N}_0$:
	\begin{equation}
	\label{M:Bound_1}
	\left|\nabla^k \bar{h}(x)\right| \lesssim  \frac{\|g\|_{{\rm L}^1(\Hd)}}{\dist(x,\supp(g)))^{d-1+k}} .
	\end{equation}
	If $\bar a^\transpose$ is not a multiple of the identity matrix, the estimate \eqref{M:Bound_1} may be derived by transforming $\bar a^\transpose$ to a multiple of the identity matrix by a linear change of coordinates (which maps the half-space $\Hd$ to another half-space), obtaining the bound \eqref{M:Bound_1} in the transformed coordinates, and changing back to the original coordinates.
	Having derived \eqref{M:Bound_1}, using the Poincar\'e and the H\"older inequalities, we get
	\begin{equation}\label{M:Num:023}
	\begin{aligned}
	&\int_{\big[x_0^{\perp} - 2r, x_0^{\perp} + 2r\big]  \times \big(\mathbb{R}^{d-1}\backslash Q_{4r}(x_0^{\parallel})\big)}   |\nabla\partial_j \bar{h}| \dx
	\\&
	\lesssim 
	\left\|g\right\|_{{\rm{L}}^1(\Hd)} r \int_{\mathbb{R}^{d-1}\backslash Q_{4r}(x_0^{\parallel})} |x^\parallel - x_0^{\parallel}|^{-d-1} {\rm{d}} x^\parallel
	\\
	&
	\lesssim   r^{-1} \left\|g\right\|_{{\rm{L}}^1(\Hd)}
	\lesssim r^{d-1} \bigg(\fint_{\mathbb{R}^d} |g|^{2} \dx\bigg)^{\frac{1}{2}}
	\stackrel{\eqref{g_avg_2}}{\lesssim} r^{-1}.
	\end{aligned}
	\end{equation}
	Inserting \eqref{M:Num:022} and \eqref{M:Num:023} into \eqref{M:Num:020} yields \eqref{M:Estim:01}.
\end{proof}

\subsection{Proof of auxiliary lemma for Proposition \ref{Systematic_error}}
\label{exp_aux}


\begin{proof}[Proof of Lemma \ref{M:Lem:1}]
The proof is divided in four Steps.
In Step 1, we establish the identity
\begin{equation}
	\label{M:Num:002}
	\begin{aligned}
	\int_{\Hd} \nabla \theta_{i} \cdot g  \dx
	&=\int_{\Hd}\nabla\theta_{i}\cdot \big(\phi_j^* a^\transpose-\sigma_j^*\big)\nabla\partial_i \bar{h}  \dx
	\\
	&\quad + \int_{\partial \Hd}\Big( \phi^*_j e_1 \cdot a\nabla \theta_{i} - \phi_{i}e_1 \cdot a^\transpose \big(e_j+\nabla \phi^*_j\big)\Big)  \partial_j\bar{h} \,{\rm{d}} S
	\\
	&\quad -\int_{\partial \Hd}\phi_{i}e_1 \cdot \sigma_j^*\nabla\partial_j \bar{h} \,{\rm{d}} S
	\\
	&=:I_1+I_2+I_3.
	\end{aligned}
\end{equation}
	Then, we show that $\bar{h}$ satisfies the following identity:
	\begin{equation}\label{M:Vanish_hbar}
		\int_{Re_1 + \partial \Hd} \nabla^k \bar{h} \,{\rm{d}} S=0 \qquad \quad\text{for all } k \in \mathbb{N} \setminus \{0\} \text{ and } R \in [0,x_0^\perp-r) \cup (x_0^\perp+r,+\infty).
	\end{equation}
	For the proof of \eqref{M:Vanish_hbar}, we proceed in two steps: in Step 2, we only consider derivatives that are tangential to $\partial \mathbb{H}^d_+$, and, in Step 3, we consider perpendicular derivatives. In Step 4 we take the expected value and use stationarity in the tangential direction to conclude.

	\paragraph{Step 1 : Argument for \eqref{M:Num:002}}
	By classical computations, denoting the two-scale expansion as
	\begin{equation}
	\label{M:Def_htilde}
	\widetilde{h}:=\bar{h}+\phi_j^*\partial_j\bar{h},
	\end{equation} 
	we have that
	\begin{equation}\label{M:Classical}
	-\nabla \cdot (a^\transpose \nabla \widetilde{h})=
	\nabla \cdot g + \nabla \cdot \big( \big(\sigma_j^* -\phi_j^* a^\transpose  \big)\nabla\partial_j \bar{h} \big) \qquad \quad\text{in } \Hd.
	\end{equation}
	Therefore,
	\begin{equation}\label{M:Num:001}
	\begin{aligned}
	\int_{\Hd} \nabla \theta_{i} \cdot g \dx
	\overset{\eqref{M:Classical}}{=}&
	-\int_{\Hd}\nabla\theta_{i}\cdot \big(a^\transpose \nabla \widetilde{h} +\big(\sigma_j^* -\phi_j^* a^\transpose  \big)\nabla\partial_j \bar{h}  \big) \dx
	\\
	& - \int_{\partial \Hd}\theta_{i} e_1  \cdot \big(a^\transpose \nabla \widetilde{h} +\big(\sigma_j^* -\phi_j^* a^\transpose  \big)\nabla\partial_j \bar{h}  \big)\,{\rm{d}} S.
	\end{aligned}
	\end{equation}
	Also, we remark that, by \eqref{defn_theta}, \eqref{Definitionh_hom}, and \eqref{M:Def_htilde} we have
	\begin{align*}
	\int_{\Hd}\nabla\theta_{i}\cdot a^\transpose \nabla \widetilde{h} \dx
	&=-\int_{\partial \Hd} \phi^*_j e_1 \cdot a\nabla \theta_{i} \, \partial_j\bar{h} \,{\rm{d}} S,
	\end{align*}
	and by definition \eqref{M:Def_htilde}
	\begin{align*}
	&\int_{\partial \Hd}\theta_{i} e_1 \cdot \big(a^\transpose \nabla \widetilde{h} +\big(\sigma_j^* -\phi_j^* a^\transpose  \big)\nabla\partial_j \bar{h}  \big) {\rm{d}} S
	\\&
	=
	\int_{\partial \Hd}\theta_{i} e_1 \cdot a^\transpose \big(e_j+\nabla \phi^*_j\big) \partial_j \bar{h} \,{\rm{d}} S +\int_{\partial \Hd}\theta_{i}e_1 \cdot \sigma_j^*\nabla\partial_j \bar{h}\,  {\rm{d}} S.
	\end{align*}
	As a consequence, inserting the two above identities into \eqref{M:Num:001} and recalling that $\theta_{i} = \phi_{i}$ on $\partial \Hd$ yields \eqref{M:Num:002}.
	
	
	\paragraph{Step 2 : Argument for \eqref{M:Vanish_hbar} for tangential derivatives.}
	Let $R \in \mathbb{R}$, $j = 2, \ldots, d$, and $k \in \mathbb{N}_0$.
	We show that
	\begin{equation}
	\label{M:Step:2}
	\int_{Re_1+\partial \Hd} \partial_j \nabla^k \bar{h} \, {\rm{d}} S=0.
	\end{equation}
	To show \eqref{M:Step:2}, we make use of the divergence theorem and the decay \eqref{M:Bound_1} to the extent of
	\begin{align*}
	\left|\int_{Re_1+\partial \Hd} \partial_j \nabla^k \bar{h} \, {\rm{d}} S\right|
	&\leq  \limsup_{l\uparrow\infty} \left|\int_{\partial [-l,l]^{d-1}} |\nabla^k \bar{h}(R,x^\parallel) | \, {\rm{d}} S(x^\parallel)\right|
	\\
	&\lesssim C(R) \limsup_{l\uparrow\infty} l^{d-2} l^{-d+1-k}=0.
	\end{align*}
	
	\paragraph{Step 3 : Argument for \eqref{M:Vanish_hbar} for perpendicular direction.}
	Let  $R \in [0,x_0^\perp-r) \cup (x_0^\perp+r,+\infty)$.
	Notice that, since the support of $g$ is disjoint from $Re_1+\partial \Hd$, we have that $\nabla \cdot \bar{a} \nabla \bar{h}=0$ in a neighborhood of $Re_1+\partial \Hd$.
	Differentiating this equation $k$ times (for $k \in \mathbb{N}_0$) yields
	\begin{equation*}
	\bar{a}_{11}\partial_1\partial_1 \nabla^k \bar{h}(x)=-\sum_{(i,j) \neq (1,1)} \bar{a}_{ij} \partial_i\partial_j  \nabla^k\bar{h}(x) \qquad \quad\text{for all } x \in Re_1+\partial \Hd.
	\end{equation*}
	Using the previous result and that $\bar{a}^{*}$ is uniformly elliptic, \eqref{M:Step:2} entails
	\begin{equation}
	\label{M:Step:3_1}
	\int_{Re_1+\partial \Hd} \partial_1\partial_1 \nabla^k \bar{h} \, {\rm{d}} S=0.
	\end{equation}
	Therefore, for establishing \eqref{M:Vanish_hbar}, it only remains to show that
	\begin{equation}\label{M:Step:3_2}
	\int_{Re_1+\partial \Hd} \partial_1 \bar{h}  \,{\rm{d}} S=0.
	\end{equation}
	
	We retrieve this via an indirect argument, remarking that, by \eqref{M:Step:2} and since $\bar{a}^*$ is elliptic, it suffices to establish
	\begin{equation}\label{M:Step:3_3}
	\int_{Re_1+\partial \Hd} (\bar{a} \nabla \bar{h}) \cdot e_1 \, {\rm{d}} S=0.
	\end{equation} 
	
	Here comes the argument for \eqref{M:Step:3_3}.
	Since $g$ is compactly supported in the interior of $\Hd$, by the divergence theorem, we obtain
	\begin{equation*}\label{new_label_exp}
	\begin{aligned}
	-\int_{[R,R+l] \times [-l,l]^{d-1}} \nabla \cdot \bar{a} \nabla \bar{h} \dx
	&=\int_{[R,R+l] \times [-l,l]^{d-1}} \nabla \cdot g \dx
	\\&
	=\int_{\partial\big([R,R+l] \times [-l,l]^{d-1} \big)} g \cdot \vec{n}\, {\rm{d}} S =0,
	\end{aligned}
	\end{equation*}
	for $l \gg x_0^\perp+r$. Using once more the divergence theorem yields
	\begin{equation}
	\label{M:Num:010}
	\begin{aligned}
	\int_{[R,R+l] \times [-l,l]^{d-1}} \nabla \cdot \bar{a} \nabla \bar{h} \dx
	=&-\int_{\{R\} \times [-l,l]^{d-1}}e_1 \cdot \bar{a} \nabla \bar{h} \, {\rm{d}} S
	\\
	&+\int_{[R,R+l] \times \partial [-l,l]^{d-1}}\vec{n} \cdot \bar{a} \nabla \bar{h} \, {\rm{d}}S
	\\
	&+\int_{\{R+l\} \times [-l,l]^{d-1}} e_1 \cdot \bar{a} \nabla \bar{h} \, {\rm{d}} S.
	\end{aligned}
	\end{equation}
	By \eqref{M:Bound_1}, the second and third terms on the right-hand side are controlled as follows
	\begin{align}\label{M:Num:011}
	&\left|\int_{[R,R+l] \times \partial [-l,l]^{d-1}}\vec{n} \cdot \bar{a} \nabla \bar{h} \, {\rm{d}}S
	+\int_{\{R+l\} \times [-l,l]^{d-1}} e_1 \cdot \bar{a} \nabla \bar{h} \, {\rm{d}} S\right|
	\\&\nonumber
	\lesssim
	C(r) l^{d-1}l^{-d} \lesssim C(r) l^{-1},
	\end{align}
	so that, letting $l\uparrow \infty$ and using \eqref{new_label_exp}, we retrieve \eqref{M:Step:3_3} as
	\begin{equation*}
	\int_{Re_1+\partial \Hd} e_1 \cdot \bar{a} \nabla \bar{h} \, {\rm{d}} S
	=
	\lim_{l \uparrow \infty}\int_{\{R\} \times [-l,l]^{d-1}}e_1 \cdot \bar{a} \nabla \bar{h} \, {\rm{d}} S = 0.
	\end{equation*}
	
	\paragraph{Step 4 : Argument for \eqref{M:Id:01}}
	Taking the expectation of \eqref{M:Num:002}, we get
	\begin{equation}\label{M:Num:100}
	\left\langle \int_{\Hd} \nabla \theta_{i} \cdot g \dx \right\rangle
	=:\left\langle I_1 \right\rangle+\left\langle I_2 \right\rangle +\left\langle I_3\right\rangle.
	\end{equation}
	We first show that
	\begin{equation}
	\label{M:Num:015}
	\left\langle I_2 \right\rangle=\left\langle I_3 \right\rangle=0,
	\end{equation}
	and then that
	\begin{equation}
	\label{M:Num:016}
	\left\langle I_1 \right\rangle= \int_{[x_0^\perp-r,x_0^\perp+r]\times \mathbb{R}^{d-1}} \left\langle\nabla\theta_{i}\cdot \big(\phi_j^* a^\transpose-\sigma_j^*\big)\right\rangle\nabla\partial_j \bar{h} \dx.
	\end{equation}
	Indeed, the identity \eqref{M:Id:01} is obtained by inserting \eqref{M:Num:015} and \eqref{M:Num:016} into \eqref{M:Num:100}.
		
	Here comes the argument for \eqref{M:Num:015}.
	Using the stationarity of $a^\transpose$, $\nabla \theta_{i}$, $\phi_{i}$, $\phi_j^*$ and $\sigma_j^*$ with respect to tangential shifts, we may define the (constant) tensors
	\begin{equation*}
	\begin{cases}
	M_{j}:= \left\langle \big(\phi^*_j e_1 \cdot a\nabla \theta_{i} - \phi_{i}e_1 \cdot a^\transpose \big(e_j+\nabla \phi^*_j\big)\big)(0)\right\rangle,
	\\
	N_{jk}:=-\left\langle \big( \phi_{i} e_1 \cdot \sigma_j^* e_k\big)(0) \right\rangle,
	\end{cases}
	\end{equation*}
	and rewrite $\left\langle I_2 \right\rangle$ and $\left\langle I_3 \right\rangle$ as
	\begin{equation*}
	\left\langle I_2 \right\rangle=M_j \int_{\partial \Hd} \partial_j \bar{h} \, {\rm{d}} S \qquad\quad\text{and } \qquad \left\langle I_3 \right\rangle=N_{jk} \int_{\partial \Hd} \partial_j\partial_k \bar{h} \,{\rm{d}} S.
	\end{equation*}
	Hence, \eqref{M:Vanish_hbar} immediately entails \eqref{M:Num:015}.
	
	Last, we show \eqref{M:Num:016}.
	Taking the expectation of $I_1$, we obtain
	\begin{equation*}
	\left\langle I_1 \right\rangle =\int_{\Hd}\left\langle \nabla\theta_{i}\cdot \big(\phi_j^* a^\transpose-\sigma_j^*\big)\right\rangle\nabla\partial_j \bar{h} \dx.
	\end{equation*}
	By \eqref{M:Vanish_hbar}, since $\left\langle \nabla\theta_{i}\cdot \big(\phi_j^* a^\transpose-\sigma_j^*\big)\right\rangle(x)$ only depends on $x^\perp$ (by the stationarity of $\nabla\theta_{i}, \phi_j^*, a^\transpose,$ and $\sigma_j^*$ in all directions but $e_1$), we may rewrite the above integral as
	\begin{equation*}
	\left\langle I_1 \right\rangle =\int_0^\infty \left\langle \big(\nabla\theta_{i}\cdot \big(\phi_j^* a^\transpose-\sigma_j^*\big)\big)(x^\perp,0)\right\rangle  \bigg( \int_{\mathbb{R}^{d-1}} \nabla\partial_i \bar{h}(x^\perp,x^\parallel) \mathrm{d}x^\parallel \bigg) \mathrm{d}x^\perp.
	\end{equation*}
	Using \eqref{M:Vanish_hbar} then yields \eqref{M:Num:016}.
\end{proof}

\appendix
\section{Regularity of random elliptic operators on $\Rd^d$ and corrector estimates}
\label{Appendix_A}

We first summarize the contents of \cite[Theorem 1 \& Theorem 2]{GNO_final}, which hold under the assumptions (A1)-(A3). In particular, we have that:

\begin{theorem*}[Theorem~1 \& Theorem~2 in \cite{GNO_final}] \label{summary_GNO} Let assumptions (A1) -- (A3) be satisfied. Then there exists a random field $r^*=r^*(a,x)$ such that $\frac{r^*}{\varepsilon}$ has stretched exponential moments in the sense of \eqref{stretched} with the following property: Let $u \in H^1_{\loc}(\Rd^d)$ be $a$-harmonic in $B_R(x_0)$ for $R>0$ and $x_0 \in \Rd^d$, \ie  suppose that $u$ satisfies 
\begin{align*}
-\nabla \cdot (a\nabla u)  &= 0 \quad \quad\text{in } \quad \, B_R(x_0).
\end{align*}
Then, for any $r,R$ with $R\geq r \geq r^*(x_0,a)>0$, it holds that 
\begin{align}
\label{large_scale_mvp}
\fint_{B_r(x_0)} |\nabla u|^2 \, \dx \lesssim_{d,\lambda} \fint_{B_R(x_0)} |\nabla u|^2 \, \dx.
\end{align}
\end{theorem*}

In the current contribution we have also used the following estimate for the first-order whole-space homogenization corrector that has been proven in \cite[Theorem~2]{GNO5} under the assumption that the ensemble satisfies a log-Sobolev inequality. The result can be found in the more general setting of homogenization for nonlinear uniformly elliptic systems under the assumption that the ensemble satisfies a spectral gap in \cite[Corollary~15]{FN_2020}, or for the degenerate linear elliptic setting in \cite{Bella_Kniely}. 
\begin{theorem*}
\label{CorrectorBoundGNO4}
Under assumptions (A1)--(A3) and for $d\geq 3$, there exists a random field $\C(a, x)$ with stretched exponential moments in the sense \eqref{stretched} such that 
\begin{align}
\label{GNO_bound_balls}
\sup_{r\geq \eps} \bigg(\fint_{B_r(x_0)} |\phi^\eps|^2+|\sigma^\eps|^2 \, \dd x \bigg)^{\frac{1}{2}} \leq 
\mathcal{C}(a,x_0)  \varepsilon
\end{align}
holds for any $x_0 \in \Rd^d$.
\end{theorem*}

Relying in addition on assumption (A4), we also obtain the following small-scale regularity properties of the correctors. For their (short) proof, we refer to the companion paper \cite{BFJR_1}.
\begin{lemma}
\label{CorrectorRegularity}
Let assumptions (A1)--(A4) be satisfied. Then for any $0<\gamma<\nu$ and $x_0\in \mathbb{R}^d$ there exists a random constant $\mathcal{C}(a,x_0)$ with a uniform bound on suitable stretched exponential moments
\begin{align}
\label{Bidule:0001}
\left \langle \exp\bigg(\mathcal{C}(a,x_0)^{1/C}/C\bigg) \right \rangle \leq 2
\end{align}
with $C$ depending possibly on $\gamma$ but not on $x_0$ such that the following is true: The estimates
\begin{align*}
|\nabla \phi_i^\eps(x)-\nabla \phi_i^\eps(y)|\leq \mathcal{C}(a,x_0) \frac{|x-y|^\gamma}{\varepsilon^\gamma}
\end{align*}
and
\begin{align*}
|\nabla \sigma_{ijk}^\eps(x)-\nabla \sigma_{ijk}^\eps(y)|\leq \mathcal{C}(a,x_0) \frac{|x-y|^\gamma}{\varepsilon^\gamma}
\end{align*}
hold for any $x,y\in B_\eps(x_0)$.

Furthermore, if additionally $\domain$ is a bounded $C^{1,1}$ domain or if $\domain=\Hd$ the following holds true: For any $0<\gamma<\nu$ and any $x_0 \in \overline{\domain}$ there exists a random constant $\mathcal{C}(a,x_0)$ with a uniform bound on suitable stretched exponential moments, \eg, \eqref{Bidule:0001}, such that the estimate
\begin{align*}
|\eps \nabla \theta^\eps_i(x)-\eps \nabla \theta^\eps_i(y)|\leq \mathcal{C}(a,y) \frac{|x-y|^\gamma}{\varepsilon^\gamma}
\end{align*}
holds for any $x,y\in \overline{\domain}$ with $|x-x_0|\leq \varepsilon$ and $|y-x_0|\leq \varepsilon$.
\end{lemma}
Finally, we have used the following Poisson kernel type estimate on solutions to the linear elliptic PDE.
\begin{proposition}
Let $\Omega\subset \mathbb{R}^d$ be a bounded domain and let $a$ be a uniformly elliptic and bounded coefficient field for which a Lipschitz regularity theory for the operators $-\nabla \cdot a\nabla$ and $-\nabla \cdot a^\ast \nabla$ with homogeneous Dirichlet boundary conditions holds in the following sense: For any $x_0\in \overline{\Omega}$ there exists $\mathcal{C}(a,x_0)>0$ such that
\begin{align*}
|\nabla v(x_0)|\leq \mathcal{C}(a,x_0) \bigg(\fint_{B_r(x_0)} |\nabla v|^2 \dx\bigg)^{1/2}
\end{align*}
holds for any $a$-harmonic or $a^\transpose$-harmonic function $v$ in $B_r(x_0)$ with $v=0$ on $\partial\Omega \cap B_r(x_0)$. Then any $a$-harmonic function $u\in H^1(\Omega)$ satisfies
\begin{align}
\label{PoissonKernelEstimateAHarmonic}
|\nabla u(x_0)| \leq \mathcal{C}(a,x_0) \int_{\partial \Omega} \frac{\mathcal{C}(a,x) |u(x)|}{|x-x_0|^d} \,dS.
\end{align}
\end{proposition}
\begin{proof}
For any fixed $i\in \{1,\ldots,d\}$, (for now formally) define $h$ as the solution
\begin{align*}
-\nabla \cdot (a^\ast \nabla h) &= \partial_i \delta_{x_0}\quad\text{in }\Omega,
\\
h&=0\quad \text{on }\partial\Omega.
\end{align*}
Using the equations satisfied by $h$ and by $u$, we then have the representation
\begin{align*}
\partial_i u(x_0) &= \int_{\partial \Omega} u \vec{n}\cdot a^\ast \nabla h \,dS -\int_\Omega a^\ast \nabla h \cdot \nabla u \,dx
\\&
=\int_{\partial \Omega} u \vec{n}\cdot a^\ast \nabla h \,dS.
\end{align*}
To establish our proposition, it thus suffices to show
\begin{align}
\label{BoundForH}
|\nabla h(y)|\leq \frac{\mathcal{C}(a,x_0)\mathcal{C}(a,y)}{|y-x_0|^d}
\end{align}
for any $y,x_0$.

The latter estimate is a consequence of a duality estimate and the regularity estimate: Set $r:=|y-x_0|/2$. For any $g\in L^2(\Omega)$ with $\supp g\subset B_r(y)$ and $||g||_{L^2}\leq 1$, introduce the solution to the PDE $-\nabla \cdot (a\nabla v)=\nabla \cdot g$ with homogeneous Dirichlet boundary conditions. We obtain by an energy estimate
\begin{align*}
\int_\Omega |\nabla v|^2 \,dx
\leq C \int_\Omega |g|^2 \,dx \leq C.
\end{align*}
As $v$ is $a$-harmonic in $B_r(x_0)$, a regularity estimate implies
\begin{align*}
|\nabla v(x_0)|\leq \mathcal{C}(a,x_0) r^{-d/2}. 
\end{align*}
We therefore obtain
\begin{align*}
\int_\Omega g\cdot \nabla h \,dx
=-\int_\Omega a\nabla v \cdot \nabla h \,dx = (\partial_i v)(x_0) \leq \mathcal{C}(a,x_0) r^{d/2}.
\end{align*}
Taking the supremum with respect to all $g$ with $\supp g\subset B_r(y)$ and $||g||_{L^2}\leq 1$, we obtain
\begin{align*}
||\nabla h||_{L^2(B_r(y))} \leq \mathcal{C}(a,x_0) r^{-d/2}.
\end{align*}
Since $h$ is $a$-harmonic on $B_r(y)$, a regularity estimate now entails
\begin{align*}
|\nabla h|(y)\leq \mathcal{C}(a,y) \bigg(\fint_{B_r(y)} |\nabla h|^2 \,dx\bigg)^{1/2},
\end{align*}
yielding the conclusion \eqref{BoundForH}.

To justify the above considerations rigorously, one uses an approximation argument for the $\delta_{x_0}$ measure.
\end{proof}

\section*{Funding}

The work of PB was supported by the Deutsche Forschungsgemeinschaft (DFG, German Research Foundation) under the priority program SPP 2256, Project number 441469601.
This research was funded in whole or in part by the Austrian Science Fund (FWF) ESP4053024. For open access purposes, the author has applied a CC BY public copyright license to any author-accepted manuscript version arising from this submission.
This project has received funding from the European Research Council (ERC) under the European Union's Horizon 2020 research and innovation programme (grant agreement No 948819). \includegraphics[height=3.0mm]{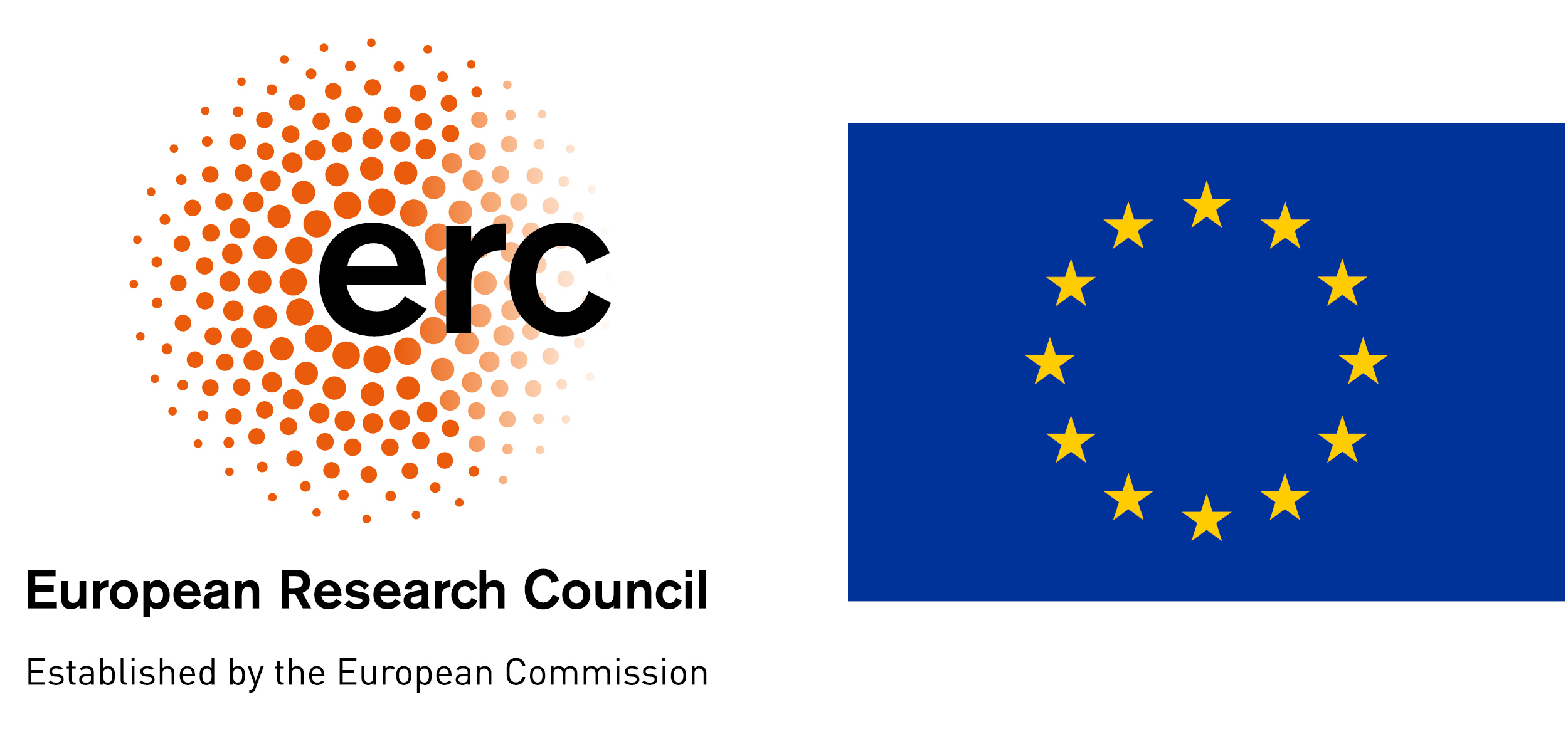}

\bibliographystyle{abbrv}
\bibliography{stochastic_homogenization}

\end{document}